\pgfplotsset{compat=1.16}
\newtheorem{theorem}{Theorem}[section]
\newtheorem{proposition}[theorem]{Proposition}
\newtheorem{lemma}[theorem]{Lemma}
\newtheorem{definition}[theorem]{Definition}
\newtheorem*{theorem*}{Theorem}
\newcommand{\ilim}{\varprojlim}
\newtheorem{example}[theorem]{Example}
\numberwithin{equation}{section}
\newcommand{\invlim}{\underleftarrow{\lim}}
\begin{document}

\title[Periodicity and Indecomposability]{Periodicity and Indecomposability in Generalized Inverse Limits}

\author{Tavish J.~Dunn and David J.~Ryden}

\address{Baylor University \\ Waco, Texas 76798-7328}

\email{tavish\_dunn@baylor.edu, david\_ryden@baylor.edu}

\subjclass[2020]{}

\keywords{continuum, inverse limit, periodicity, indecomposability, intermediate value property}

\begin{abstract}
In this paper, we consider inverse limits of $[0,1]$ using upper semicontinuous set-valued bonding functions with the intermediate value property. Expanding on classical results by Barge and Martin, we explore the relationship between periodicity in the bonding function and the topology of the corresponding inverse limit. In particular, for an inverse limit of a single upper semicontinuous bonding map with the intermediate value property, we provide sufficient conditions for the existence of a periodic cycle with period not a power of two in the bonding function to imply the existence of an indecomposable subcontinuum of the inverse limit.  We also give a partial converse.  Along the way to these results, we show that subcontinua of these inverse limits have the full-projection property.

\end{abstract}

\maketitle

\section{Introduction}

Inverse limits have been a connection point between the studies of dynamics and continuum theory for decades.  We note in particular the study undertaken by Marcy Barge and Joe Martin in the eighties to discern for a map $f:[0,1] \to [0,1]$ the relationship of its dynamics to the dynamics of the shift map on its inverse limit \cite{BargeMartin1}, \cite{BargeMartin2} and to the topology of its inverse limit \cite{BargeMartin3}, \cite{BargeMartin4}.  Two of their results \cite[Theorems~1 \& 7]{BargeMartin1} are of primary concern here: (1) If $f$ has a periodic point whose period $p$ is not a power of two, then the inverse limit has an indecomposable subcontinuum that is invariant under the shift $\hat{f}^p$, and (2) If $f$ is organic and the inverse limit is indecomposable, then $f$ has a periodic cycle whose period is not a power of two.

The introduction by William S.~Mahavier and W.T.~Ingram of inverse limits with upper semicontinuous set-valued bonding functions \cite{Mahavier}, \cite{IngramMahavier} induced a spate of questions about what conditions allow for the generalization of classical results to this new context.  A particular area of interest has been to identify and analyze circumstances that give rise to indecomposable subcontinua in the inverse limit. Ingram, James P.~Kelly, Jonathan Meddaugh, and Scott Varagona have all written on the subject, using the full-projection property as a crucial tool to demonstrate indecomposability \cite{Ingram}, \cite{Kelly}, \cite{KellyMeddaugh}, \cite{Varagona}.

Similarly, much work has been done in set-valued dynamical systems in an attempt to generalize results from the classical case, with applications to switched circuit networks \cite{Camlibel}, economics \cite{Cherene}, and game theory \cite{MaschlerPeleg} \cite{FaureRoth}. Recent years have seen the field as a robust area of study in its own right, with results involving the specification property \cite{Tennant}, chaos \cite{Fedeli}, entropy \cite{Olivera} \cite{KellyTennant}, and shadowing \cite{Pilyugin1} \cite{Pilyugin2}. 

This paper is part of a study that explores the intermediate value property for upper semicontinuous set-valued interval maps.  The intermediate value property guarantees that the Sarkovkii order for periodic cycles holds \cite{OteyRyden}, and the weak intermediate value property gives rise to connectedness of the inverse limit \cite{Dunn}.  In this paper we generalize the two results of Barge and Martin stated above.  In our approach the intermediate value property plays a crucial role in linking the nontrivial dynamics of the bonding function to the exotic topology of the inverse limit.  A key aspect of that role is to establish the full-projection property.  The main result, stated here, follows from Theorems~\ref{Interior} and \ref{IndecomposablePeriodic}.

\begin{theorem}  \label{MainResult}
Suppose $f:[0,1]\to 2^{[0,1]}$ is upper semicontinuous and has the intermediate value property. Consider the following conditions.

\begin{enumerate}[(a)]
\item \label{1.1.a} $f$ has a periodic cycle whose period is not a power of two.
\item \label{1.1.b}
    \begin{enumerate} [(b1)]
    \item \label{1.1.b.1} $\ilim\{[0,1], f\}$ has an indecomposable subcontinuum.
    \item \label{1.1.b.2} $\ilim\{[0,1], f\}$ is indecomposable.
    \end{enumerate}
\end{enumerate}
Then (\ref{1.1.a}) and (\ref{1.1.b}) are related as follows.
\begin{enumerate}
\item \label{MainResult.1}If $f|_{[0,1] \setminus\pi_1[{\rm int}(G(f))]}$ is almost nonfissile and light, then (\ref{1.1.a}) implies (\ref{1.1.b.1}).
\item \label{MainResult.2}If $f$ is organic, then (\ref{1.1.b.2}) implies (\ref{1.1.a}).
\end{enumerate}

\end{theorem}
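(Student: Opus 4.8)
The plan is to obtain the two implications from the structural theorems proved later: implication (\ref{MainResult.1}) from Theorem~\ref{Interior} (together with the full-projection property for subcontinua established earlier), and implication (\ref{MainResult.2}) from Theorem~\ref{IndecomposablePeriodic}. What follows is an outline of how I would prove those underlying results, following the template of Barge and Martin but replacing their appeal to the intermediate value theorem for continuous maps with the intermediate value property hypothesis, and replacing their implicit use of the full-projection property with the version available here for subcontinua of generalized inverse limits.

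For (\ref{MainResult.1}), assume $f$ has a periodic cycle of period $p$ not a power of two. First I would invoke the Sarkovskii-type theorem available under the intermediate value property (\cite{OteyRyden}) to obtain, after passing to a suitable power $f^N$, a pair of nondegenerate closed intervals $A,B\subseteq[0,1]$ with disjoint interiors whose images under $f^N$ each ``cover'' $A\cup B$ --- a set-valued $2$-horseshoe. The intermediate value property is exactly what guarantees that these covering relations survive composition. Next I would build the subcontinuum $H$ of $\ilim\{[0,1],f\}$ coming from the inverse limit over the horseshoe pieces and argue that $H$ is indecomposable. Here the hypotheses on $f|_{[0,1]\setminus\pi_1[\mathrm{int}(G(f))]}$ enter: lightness prevents the bonding function from collapsing an interval of the horseshoe to a point (which could destroy the folding), and the almost-nonfissile condition prevents the graph from separating in a way that would split $H$ into a union of proper subcontinua. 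With these in place, the full-projection property forces any subcontinuum covering $H$ to project onto every coordinate space, and the standard crookedness argument then shows $H$ cannot be written as the union of two proper subcontinua.

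For (\ref{MainResult.2}) I would argue the contrapositive: supposing $f$ is organic and has no periodic cycle of period other than a power of two, I would show $\ilim\{[0,1],f\}$ is decomposable. The organic hypothesis should allow me to recover the Barge--Martin decomposition: in the absence of ``odd'' periods, the interval carries a nested family of invariant sets on which $f$ behaves like a $2^{\infty}$-type (odometer) system, and this structure yields two proper subcontinua of the inverse limit whose union is the whole space. The role of organicity is to exclude the pathological recurrence that would otherwise obstruct this decomposition.

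The main obstacle in both parts is the passage between the one-dimensional dynamics of the multivalued bonding function and the topology of the inverse limit --- concretely, controlling the vertical segments and branch points of the graph of $f$. In part (\ref{MainResult.1}) this is precisely what the full-projection property for subcontinua (proved ``along the way'') is designed to handle, and it is also why the restricted map must be assumed light and almost nonfissile; verifying that these hypotheses genuinely suffice to transport the horseshoe to an indecomposable subcontinuum is the delicate step. In part (\ref{MainResult.2}) the corresponding difficulty is isolating the right formulation of ``organic'' that makes the decomposition argument go through in the set-valued setting.
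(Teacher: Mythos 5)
Your top-level reduction is exactly the paper's: part~(\ref{MainResult.1}) is Theorem~\ref{Interior} and part~(\ref{MainResult.2}) is Theorem~\ref{IndecomposablePeriodic}. But both of your outlines for those underlying theorems contain genuine gaps. For (\ref{MainResult.1}), the horseshoe step is not available: Theorem~\ref{GenSarkovskii} gives only the Sarkovskii order for cycles, not the Block--Guckenheimer-type statement that a period not a power of two forces some iterate to carry two intervals with disjoint interiors each covering their union; that would have to be proved separately for set-valued maps with the intermediate value property, and the paper never does so. What the paper actually does (Theorem~\ref{PeriodicIndecomposable}) is pass to a cycle of period $3\cdot 2^{k+1}$, let $x$ be the point of $\invlim\{[0,1],f\}$ modeling it, and show that the subcontinuum irreducible about $x$, $h^{2^{k+1}}(x)$, $h^{2^{k+2}}(x)$ (where $h$ is the shift) is irreducible between each pair of these points; the tools are the full-projection property for subcontinua (Theorem~\ref{FPP2}) and Lemma~\ref{ConstantInterval}, which prevents a nondegenerate projection from collapsing to a point --- there is no crookedness argument. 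You also omit the step that explains the odd form of the hypothesis: when $\mathrm{int}\,G(f)\neq\emptyset$, Theorem~\ref{Interior} constructs a sub-function $g$ with $G(g)\subseteq G(f)$, empty interior, light, almost nonfissile, with the intermediate value property, and retaining the cycle, and applies Theorem~\ref{PeriodicIndecomposable} to $g$. Saying that lightness and almost nonfissility ``prevent collapsing'' does not address how a hypothesis on the restriction to $[0,1]\setminus\pi_1[\mathrm{int}\,G(f)]$ controls anything over the set where $G(f)$ has interior; the construction of $g$ is where that work happens.

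For (\ref{MainResult.2}), your contrapositive route is a different and substantially harder claim that you do not substantiate: that organicity plus the absence of non-power-of-two periods yields an explicit decomposition of $\invlim\{[0,1],f\}$ via a $2^{\infty}$/odometer structure. No mechanism is given, nothing of the sort is proved in the paper, and it is far from clear it is true in the set-valued setting. The paper argues directly: indecomposability yields three points $x,y,z$ with $\invlim\{[0,1],f\}$ irreducible between each pair; organicity, through Lemmas~\ref{irred1} and~\ref{irred}, produces $n$ with $f^{n}(\overline{x_{n}y_{n}})=f^{n}(\overline{y_{n}z_{n}})=f^{n}(\overline{x_{n}z_{n}})=[0,1]$; and a case analysis on the position of $f^{n}(y_{n})$ relative to $y_{n}$, using the intermediate value property to realize covering relations by restrictions, constructs a periodic orbit whose period is divisible by $3$. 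Organicity is used only to convert irreducibility of the inverse limit into eventual surjectivity of $f^{n}$ on the coordinate intervals $\overline{x_{n}y_{n}}$, not to ``exclude pathological recurrence.'' As written, neither half of your proposal closes the gap between the stated plan and a proof.
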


Section~2 presents preliminary definitions and notation along with some examples of upper semicontinuous functions to illustrate the intermediate value property and weak intermediate value property and note that inverse limits of upper semicontinuous functions with the intermediate value property are connected \cite{Dunn}.

In Section~3, we establish the full-projection property for inverse limits of surjective, light, almost nonfissile, upper semicontinuous functions with the intermediate value property (Theorem~\ref{FPP}) and for all subcontinua whose projections are nondegenerate (Theorem~\ref{FPP2}).

Section~4 generalizes results of Barge and Martin \cite{BargeMartin1} and culminates in Theorems~\ref{Interior} and \ref{IndecomposablePeriodic}, in which the existence of periodic cycles with period not a power of two in the bonding function gives rise to indecomposability in the inverse limit and vice versa, respectively.

\section{Definitions and Notation}

\begin{definition}
\rm{A \emph{continuum} refers to a nonempty, compact, connected subset of a metric space. For a continuum $X$, we denote the collection of nonempty compact subsets of $X$ by $2^{X}$ and denote the collection of nonempty subcontinua of $X$ by $C(X)$. A continuum $X$ is \emph{irreducible} about a closed set $A \subseteq X$ if the only subcontinuum of $X$ containing $A$ is $X$ itself.}
\end{definition}

\begin{definition}
\rm{A function $f:[a,b]\rightarrow2^{[c,d]}$ is \emph{upper semicontinuous at $x$} if for every open set $U$ containing $f(x)$ there is an open set $V$ containing $x$ such that $f[V] \subseteq U$. The function $f$ is \emph{upper semicontinuous} if it is upper semicontinuous at every point in its domain.}

The \emph{graph} of a function $f:[a,b]\rightarrow2^{[c,d]}$ is the set $G(f)=\{(x,y)\in[a,b]\times[c,d]:y\in f(x)\}$. It is well known from \cite{IngramMahavier} that $f$ is upper semicontinuous if and only if $G(f)$ is closed.
\end{definition}

\begin{definition}
\rm{Let $X$ and $Y$ be metric spaces and $f:X\rightarrow 2^{Y}$. An \emph{orbit} of $f$ is a sequence $\{x_{i}\}_{i\in\omega}$ where $x_{i+1}\in f(x_{i})$ for all $i$. If $x\in X$, an orbit of $x$ is an orbit of $f$ where $x_{0}=x$. The orbit is said to be \emph{periodic} if there is some $n\in\mathbb{N}$ such that $x_{n+i}=x_{i}$ for all $i$. The smallest such $n$ is called the \emph{period} of the orbit. A finite sequence $(x_{0},x_{1},\dots,x_{n-1})$ is called a \emph{cycle} if $(x_{0},x_{1},\dots,x_{n-1},x_{0},x_{1},\dots)$ is a periodic orbit.}
\end{definition}

\rm{As $f$ is a set-valued function, a given point $x$ may not have a unique orbit. Because of this, for a given orbit $\{x_{i}\}_{i\in\omega}$ there may be some $i\in\mathbb{N}$ such that $x_{i}=x$ but $\{x_{i}\}_{i\in\omega}$ is not periodic. Similarly there may be an orbit of period $n$, yet there may be some $0<j<n$ such that $x_{j}=x$. This may occur in orbits where $f(x)$ is nondegenerate and $x_{1}\neq x_{j+1}$.}

\begin{definition}
\rm{Let $X_{0},X_{1},X_{2},\dots$ be a sequence of continua and for all $i\in\omega$ let $f_{i+1}:X_{i+1}\rightarrow 2^{X_{i}}$ be upper semicontinuous.  The pair $\{X_{i},f_{i}\}$ is called an \emph{inverse sequence}, and the \emph{inverse limit} of $\{X_{i},f_{i}\}$ is the subspace of $\prod_{i\in\omega}$ given by
\[\invlim\{X_{i},f_{i}\}=\left\{x=(x_{0},x_{1},\dots)\in\prod_{i\in\omega}X_{i}:x_{i-1}\in f_{i}(x_{i}) \ \forall i\geq1\right\}.
\]
The spaces $X_{i}$ are called the \emph{factor spaces} of the inverse limit, and the functions $f_{i}$ are the \emph{bonding functions}.  For $n > i$, $f_i^n: X_n \to X_i$ denotes the composition $f_{i+1} \circ f_{i+2} \circ ... \circ f_n$.  For each $n\in\omega$, the map $\pi_{n}:\invlim\{X_{i},f_{i}\}\rightarrow X_{n}$ defined by $\pi_{n}(x)=x_{n}$ is the \emph{projection map} onto the $n$th factor space.}
\end{definition}

\begin{definition}
\rm{Let $\{X_i,f_i\}$ be an inverse sequence and $X = \invlim \{X_i,f_i\}$.  We say $X$ has the \emph{full-projection property} if and only if $K = X$ for every subcontinuum $K$ of $X$ such that $\pi_i[K] = X_i$ for infinitely many $i \in \omega$.}
\end{definition}

\begin{definition}
\rm{The function $f:[a,b]\rightarrow2^{[c,d]}$ is \emph{weakly continuous from the left at $x$} if it is upper semicontinuous and, for each $y\in f(x)$, there is a sequence $\{(x_{n},y_{n})\}_{n\in\omega}$ that converges to $(x,y)$ such that $x_{n}<x$ and $y_{n}\in f(x_{n})$ for each $n$.

The function $f:[a,b]\rightarrow2^{[c,d]}$ is \emph{weakly continuous from the right at $x$} if it is upper semicontinuous and, for each $y\in f(x)$, there is a sequence $\{(x_{n},y_{n})\}_{n\in\omega}$ that converges to $(x,y)$ such that $x_{n}>x$ and $y_{n}\in f(x_{n})$ for each $n$.

The function $f:[a,b]\rightarrow2^{[c,d]}$ is \emph{weakly continuous at $x$} if $f$ is weakly continuous from the left and from the right at $x$, and $f$ is \emph{weakly continuous} if it is weakly continuous for each $x\in(a,b)$.}
\end{definition}

\begin{definition}
\rm{Let $f:[a,b]\rightarrow 2^{[c,d]}$ be an upper semicontinuous function. We say $f$ has the \emph{intermediate value property} if, given distinct $x_{1},x_{2}$, distinct $y_{1}\in f(x_{1})$, $y_{2}\in f(x_{2})$, and $y$ strictly between $y_1$ and $y_2$, there is some $x$ strictly between $x_{1}$ and $x_{2}$ such that $y\in f(x)$.

We say $f$ has the \emph{weak intermediate value property} if, given distinct $x_{1}$, $x_{2}$, and $y_{1}\in f(x_{1})$ there is some $y_{2}\in f(x_{2})$ such that if $y$ is between $y_{1}$ and $y_{2}$, then there is $x$ between $x_{1}$ and $x_{2}$ such that $y\in f(x)$.}
\end{definition}

Note that we do not specify if $x_{2}$ is larger than $x_{1}$. So for a function to have the weak intermediate value property, it is necessary for the condition to hold when $x_{2}>x_{1}$ and $x_{1}>x_{2}$. If $f$ is upper semicontinuous and has the intermediate value property, it follows that $f$ is weakly continuous via Theorem 3.12 of \cite{Dunn}.

Let $f:[a,b]\rightarrow 2^{[c,d]}$ and $g:[c,d]\rightarrow 2^{[i,j]}$ be upper semicontinuous, $I$ be a closed subinterval of $[a,b]$, and $J$ be a closed subinterval of $[c,d]$ such that if $x\in I$, then $f(x)\cap J\neq\emptyset$. Let $f|_{I}:I\rightarrow2^{[c,d]}$ denote the function $f|_{I}(x)=f(x)$. Let $f|_{I}^{J}:I\rightarrow J$ denote the function $f|_{I}^{J}(x)=f(x)\cap J$. Note that if $f$ and $g$ have the (weak) intermediate value property, then each of $f|_{I}$, $f|_{I}^{J}$, and $g\circ f$ has the (weak) intermediate value property as well.

Below we present some examples to demonstrate what it means for an upper semicontinuous function to have the intermediate value property and weak intermediate value property. Examples \ref{IVPFunction}, \ref{WeakIVP}, and \ref{NoWeakIVP} are upper semicontinuous functions that have the intermediate value property, the weak intermediate value property but not the intermediate value property, and neither property respectively.

\begin{example}\label{IVPFunction}
Let $f:[0,1]\rightarrow 2^{[0,1]}$ be defined by \[f(x)=\left\{\begin{array}{lr}
\left[0,\frac{1}{4}\right] & x=0 \\
\frac{1}{4}\sin\left(\frac{1}{x}\right)+\frac{1}{4} & 0\leq x\leq\frac{1}{\pi} \\
\frac{1}{4(\pi-1)}(3\pi x+\pi-4) & \frac{1}{\pi}\leq x\leq 1
\end{array}\right.\]
Then $f$ has the intermediate value property.
\end{example}

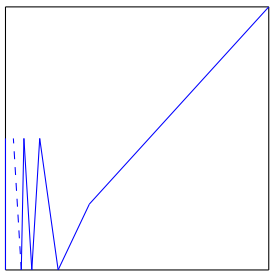
\begin{figure}[H]
\begin{center}
\begin{tikzpicture}[x=3.5cm, y=3.5cm]
\draw (0,1) -- (1,1);
\draw (1,0) -- (1,1);
\draw (0,0) -- (1,0);
\draw (0,0) -- (0,1);
\draw[blue] (0,0) -- (0,1/2);
\draw[blue,domain=1/3.14:1,samples=1000,dashed] (.06,0) -- (.03,1/2);
\draw[blue,domain=1/3.14:1,samples=1000,smooth] (7/100,1/2) -- (.06,0);
\draw[blue,domain=1/3.14:1,samples=1000,smooth] (1/10,0) -- (7/100,1/2);
\draw[blue,domain=1/3.14:1,samples=1000,smooth] (13/100,1/2) -- (1/10,0);
\draw[blue,domain=1/3.14:1,samples=1000,smooth] (1/5,0) -- (13/100,1/2);
\draw[blue,domain=1/3.14:1,samples=1000,smooth] (1/3.14,1/4) -- (1/5,0);
\draw[blue,domain=1/3.14:1,samples=1000,smooth] plot (\x, {1.1*\x-0.1});
\end{tikzpicture}
\end{center}
 \caption{$G(f)$ from Example \ref{IVPFunction}.} \label{Fig1}
\end{figure}

\begin{example}\label{WeakIVP}
Let $f:[0,1]\rightarrow2^{[0,1]}$ be given by $f(x)=\{x,1-x\}$. Then $f$ is upper semicontinuous and has the weak intermediate value property but does not have the intermediate value property.
\end{example}
To see why $f$ does not have the intermediate value property, consider $(x_{1},y_{1})=(0,1)$ and $(x_{2},y_{2})=\left(\frac{1}{4},\frac{1}{4}\right)$. There is no $x\in\left(0,\frac{1}{4}\right)$ such that $f(x)$ contains $\frac{1}{2}$.

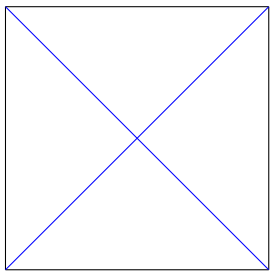
\begin{figure}[h]
\begin{center}
 \begin{tikzpicture}[x=3.5cm, y=3.5cm]
\draw (0,1) -- (1,1);
\draw (1,0) -- (1,1);
\draw (0,0) -- (1,0);
\draw (0,0) -- (0,1);
\draw[blue,domain=0:1,samples=1000] plot (\x, {\x});
\draw[blue,domain=0:1,samples=1000] plot (\x, {1-\x});
\end{tikzpicture}
 \end{center}
 \caption{$G(f)$ from Example \ref{WeakIVP}.} \label{Fig2}
 \end{figure}

\begin{example}\label{NoWeakIVP}
Let $f:[0,1]\rightarrow2^{[0,1]}$ be given by \[f(x)=\left\{\begin{array}{lr}
\frac{1}{3}x & 0\leq x<\frac{1}{2} \\
\left\{\frac{1}{3}x, 2x-1\right\} & \frac{1}{2}\leq x\leq 1
\end{array}\right.\]
Then $f$ has neither the intermediate value property nor the weak intermediate value property.
\end{example}
Let $(x_{1},y_{1})=\left(\frac{1}{2},0\right)$ and $x_{2}=\frac{1}{4}$. Since $f\left(\frac{1}{4}\right)=\left\{\frac{1}{12}\right\}$, $y_{2}$ must be $\frac{1}{12}$. But $\frac{1}{24}\notin f(x)$ for any $x\in\left(\frac{1}{4},\frac{1}{2}\right)$. So $f$ does not have the weak intermediate value property.

\begin{figure}[H]
\begin{center}
 \begin{tikzpicture}[x=3.5cm, y=3.5cm]
\draw (0,1) -- (1,1);
\draw (1,0) -- (1,1);
\draw (0,0) -- (1,0);
\draw (0,0) -- (0,1);
\draw[blue,domain=0:1,samples=1000] plot (\x, {0.5*\x});
\draw[blue,domain=0.5:1,samples=1000] plot (\x, {2*\x-1});
\end{tikzpicture}
 \end{center}
  \caption{$G(f)$ from Example \ref{NoWeakIVP}.} \label{Fig3}
  \end{figure}
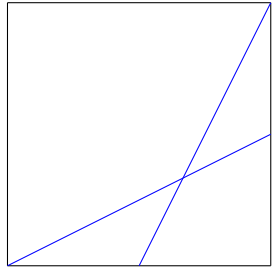


\begin{definition}
\rm{Let $X$ and $Y$ be metric spaces and $f:X\rightarrow 2^{Y}$. A point $x \in X$ is a \emph{fissile} point of $f$ if $|f(x)|>1$ and a \emph{nonfissile} point otherwise, i.e. $f(x)=\{y\}$.

A point $(x,y)\in G(f)$ is a \emph{fissile} point of $G(f)$ if $x$ is a fissile point of $f$ and a \emph{nonfissile} point of $G(f)$ otherwise.

The function $f$ is \emph{almost nonfissile} if the set of nonfissile points of $G(f)$ is a dense $G_{\delta}$ subset of $G(f)$.

Let $\{X_i, f_i\}$ be an inverse sequence. A point $(x_0, x_1,...) \in \invlim\{X_{i},f_{i}\}$ is a fissile point of $\invlim\{X_{i},f_{i}\}$ if $x_i$ is a \emph{fissile} point of $f_i$ for some $i$ and a \emph{nonfissile} point of $\invlim\{X_{i},f_{i}\}$ otherwise.}
\end{definition}


\noindent {\bf Remark.}
The requirement that a function $f:X \to 2^Y$ be almost nonfissile is not equivalent to the requirement that the set of nonfissile points of $f$ be a dense $G_{\delta}$ subset of $X$.  See Examples~\ref{NoIVPNotANF} and \ref{IVPNotANF}. 

However, it is true that if $f$ is almost nonfissile, then the set of nonfissile points of $f$ is a dense $G_{\delta}$ set in $X$.  It is straightforward to show density, and it is shown in Lemma~\ref{ANFDomain} that the set of nonfissile points of $f$ is a $G_{\delta}$ set.

The set of fissile points of $f:X \to 2^Y$, the set of fissile points of $G(f)$, and the set of fissile points of an inverse limit are all $F_{\sigma}$ sets.  The first and last of these is proved in \cite{Ryden}.  The first is also a consequence of Lemma~\ref{ANFDomain}.

\begin{example}  \label{NoIVPNotANF}
Let $f:[0,1] \rightarrow 2^{[0,1]}$ be defined by \[f(x)=\left\{\begin{array}{lr}
\{0\} & 0 \leq x < 1 \\
{[}0,1{]} & x = 1  \\
\end{array}\right.\]
Then $f$ is not almost nonfissile and does not have the intermediate value property; however, $f$ does have the weak intermediate value property.
\end{example}

Note that the set of nonfissile points of $f$ is the interval $[0,1)$ which is a dense $G_{\delta}$ subset of $[0,1]$, but $f$ fails to be almost nonfissile because the set of nonfissile points of $G(f)$ is $[0,1) \times \{0\}$ which is not dense in $G(f)$.

\begin{example}  \label{IVPNotANF}
Let $\{C_{r}: r\in\mathbb{Q}\cap[0,1]\}$ denote a collection of Cantor sets in $(0,1]$ such that, for $r > s$, $C_r$ is a subset of $C_s$ that contains no endpoint of $C_s$, and let $f:[0,1]\rightarrow C([0,1])$ be defined by

\vspace*{.5\baselineskip}

$f(t)= \left\{ \begin{array}{ll}
       \{0\} & \text{if } t \notin C_{0} \\
       {[}0,\sup\{r: t \in C_{r}\}{]} & \text{if } t \in C_{0}
       \end{array}\right.$
\vspace{.5\baselineskip}  \\
\noindent Then $f$ is not almost nonfissile, but $f$ does have the intermediate value property.
\end{example}

The function $f$ is the simplest member of a family of functions which, together with their inverse limits, constitute the focus of \cite{DunnRyden2}.  There it is shown that each function in the family has the intermediate value property and periodic cycles of all periods, but generates an hereditarily decomposable tree-like continuum as its inverse limit.  Also in \cite{DunnRyden2} is the construction of a collection of Cantor sets with the properties required for the definition of $f$.



\begin{definition}
\rm{A function $f:X\rightarrow 2^{Y}$ is \emph{light} if for every $y\in[0,1]$, the set $\{x\in[0,1]:y\in f(x)\}$ has no interior.}
\end{definition}

\noindent {\bf Notation.} If $x_{1},x_{2}\in [a,b]$, let $\overline{x_{1}x_{2}}$ denote the closed interval with endpoints $x_{1}$ and $x_{2}$.

\begin{definition}
\rm{If $f:[a,b]\rightarrow2^{[a,b]}$ is upper semicontinuous, we say $f$ is \emph{organic} if for every $x,y\in\invlim\{[a,b],f\}$ such that $\invlim\{[a,b],f\}$ is irreducible between $x$ and $y$, then there exists $n\in\mathbb{N}$ such that $f^{n}(\overline{x_{n}y_{n}})=[a,b]$.}
\end{definition}

We briefly justify our focus on functions with the intermediate value property.

\begin{theorem}\cite{Dunn} \label{InvLim}
Let $f:[0,1]\rightarrow 2^{[0,1]}$ be a surjective, upper semicontinuous function with the weak intermediate value property and a connected graph $G(f)$. Then $\invlim\{[0,1],f\}$ is connected.
\end{theorem}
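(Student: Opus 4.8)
The plan is to reduce Theorem~\ref{InvLim} to a statement about the finite partial graphs of $f$ and then prove that statement by induction, with the weak intermediate value property doing its work at every stage. For $n\ge 1$ write $\Gamma_n=\{(x_0,\dots,x_n)\in[0,1]^{n+1}:x_{i-1}\in f(x_i)\text{ for }1\le i\le n\}$, and let $\Pi_n=\{x\in[0,1]^{\omega}:(x_0,\dots,x_n)\in\Gamma_n\}$. Since $G(f)$ is closed, each $\Pi_n$ is a nonempty compact set, $\Pi_1\supseteq\Pi_2\supseteq\cdots$, $\Pi_n$ is homeomorphic to $\Gamma_n\times[0,1]^{\omega}$, and $\invlim\{[0,1],f\}=\bigcap_n\Pi_n$. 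Because a nested intersection of continua in a compact Hausdorff space is a continuum, it suffices to show that every $\Gamma_n$ is connected. Here $\Gamma_1$ is homeomorphic to $G(f)$, which is connected by hypothesis, so the content is the inductive step, and I expect both the surjectivity of $f$ and the weak intermediate value property to be used there.

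For the inductive step, assume $\Gamma_n$ is connected and suppose toward a contradiction that $\Gamma_{n+1}=A\sqcup B$ with $A,B$ nonempty, closed, and disjoint. The map $p:\Gamma_{n+1}\to\Gamma_n$ forgetting the last coordinate is a continuous closed surjection (surjectivity comes from $f$ being onto), and its fiber over $(x_0,\dots,x_n)$ is a copy of $\{t:x_n\in f(t)\}$. If every fiber lay entirely within $A$ or within $B$, then $p[A]$ and $p[B]$ would be disjoint closed sets covering the connected space $\Gamma_n$, which is impossible; hence some fiber is split, i.e.\ there exist $\mathbf x^{*}=(x_0^{*},\dots,x_n^{*})\in\Gamma_n$ and $t_A\neq t_B$ with $x_n^{*}\in f(t_A)\cap f(t_B)$, $(\mathbf x^{*},t_A)\in A$, and $(\mathbf x^{*},t_B)\in B$. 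The weak intermediate value property enters through the following ``strip-reaching'' property of $G(f)$, which I would establish first as a lemma: for distinct $s_1,s_2$ and any $y_1\in f(s_1)$, the component of $(s_1,y_1)$ in $G(f)\cap(\overline{s_1 s_2}\times[0,1])$ meets $\{s_2\}\times f(s_2)$. (The weak intermediate value property supplies a value $y_2\in f(s_2)$ with $\overline{y_1 y_2}$ lying in the vertical shadow of this strip, and a boundary-bumping argument inside the compact set $G(f)\cap(\overline{s_1 s_2}\times[0,1])$ promotes this to the component-level statement.) Using the lemma over the interval $\overline{t_A t_B}$, one extracts a continuum $K\subseteq G(f)\cap(\overline{t_A t_B}\times[0,1])$ linking the configurations relevant to $\mathbf x^{*}$ over $t_A$ and over $t_B$.

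To use such a $K$, I would pull it back to the closed set $e^{-1}(K)\subseteq\Gamma_{n+1}$ along $e(x_0,\dots,x_n,t)=(t,x_n)$, arranged so that $e^{-1}(K)$ meets both $(\mathbf x^{*},t_A)\in A$ and $(\mathbf x^{*},t_B)\in B$, and then argue that $e^{-1}(K)$ is connected, yielding the contradiction. This last connectedness is the crux, and the step I expect to fight hardest. Two difficulties appear: first, $e^{-1}(K)$ is a fiber product $K\times_{[0,1]}\Gamma_n$ (matching the second coordinate of a point of $K$ with the last coordinate of a point of $\Gamma_n$), and the fibers of $\Gamma_n$ over its last coordinate need not be connected, so the very obstruction from the inductive step recurs one dimension lower; second, a single application of the lemma need not place both of the designated points of $A$ and $B$ into $e^{-1}(K)$, since the fiber of $G(f)$ over an endpoint may itself be disconnected. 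The remedy for both is to strengthen the induction: prove not merely that $\Gamma_n$ is connected but that $\Gamma_n$ is strip-reaching over its own last coordinate (for distinct $a,b$ and any configuration in $\Gamma_n$ with last coordinate $a$, the component of that configuration inside $\{$configurations with last coordinate in $\overline{ab}\}$ meets the level where the last coordinate equals $b$). This is exactly the statement that makes the fiber-product step go through: when both factors of $K\times_{[0,1]}\Gamma_n$ are strip-reaching over the gluing map, one can interleave the two reaching mechanisms to conclude that $\Gamma_{n+1}$ is both connected and again strip-reaching over its last coordinate, closing the induction. Pinning down the precise form of this strengthened hypothesis — strong enough to be inherited from $\Gamma_n$ and from $G(f)$, yet a genuine consequence of the weak intermediate value property at the base — is where the real work of the proof is concentrated.
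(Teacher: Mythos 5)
This theorem is quoted from \cite{Dunn} and not proved in the present paper, so there is no in-paper argument to compare against; I can only assess your sketch on its own terms. Your reduction is the standard and correct one: $\invlim\{[0,1],f\}=\bigcap_n\Pi_n$ is a nested intersection of compacta, each $\Pi_n\cong\Gamma_n\times[0,1]^{\omega}$, so everything rests on the connectedness of the finite Mahavier products $\Gamma_n$, with $\Gamma_1\cong G(f)$ handling the base case. Your ``strip-reaching'' lemma is also true, and can be proved more directly than your sketch suggests: if the component $C$ of $(s_1,y_1)$ in $H=G(f)\cap V_{\overline{s_1s_2}}$ missed $\{s_2\}\times f(s_2)$, then (components being quasicomponents in compacta) $H$ splits into disjoint clopen sets $P\supseteq C$ and $Q\supseteq\{s_2\}\times f(s_2)$; letting $c=\max\pi_1[P]<s_2$ and $(c,y_c)\in P$, the weak intermediate value property applied from $(c,y_c)$ to $c+\epsilon_n$ produces points $(c+\epsilon_n,y_2(\epsilon_n))\in Q$ converging (after passing to a subsequence and using closedness of $G(f)$) to a point of $\{c\}\times f(c)$ that is joined to $(c,y_c)$ by a vertical segment inside $G(f)$, hence lies in $P$ --- contradicting $P\cap Q=\emptyset$. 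Surjectivity of $f$ is indeed what makes the forgetful projection $\Gamma_{n+1}\to\Gamma_n$ surjective, so the existence of a split fiber under a putative separation $A\sqcup B$ is correctly derived.

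The genuine gap is the inductive step itself, and you have named it rather than closed it. Having found $(\mathbf x^{*},t_A)\in A$ and $(\mathbf x^{*},t_B)\in B$, you must connect them inside $\Gamma_{n+1}$, and the object you propose to do this with is the fiber product $K\times_{[0,1]}\Gamma_n$; but its connectedness is exactly the kind of statement the theorem is about, one level down, and you do not prove it. The ``strengthened induction hypothesis'' (that $\Gamma_n$ is strip-reaching over its last coordinate) is never given a precise formulation, and the claim that one can ``interleave the two reaching mechanisms'' to propagate it to $\Gamma_{n+1}$ is an assertion, not an argument. Moreover, even granting that $\Gamma_{n+1}$ is strip-reaching over its last coordinate, reaching from $(\mathbf x^{*},t_A)$ to the level $t=t_B$ only lands you at some point $(\mathbf z,t_B)$ with $\mathbf z$ possibly different from $\mathbf x^{*}$; joining $(\mathbf z,t_B)$ to $(\mathbf x^{*},t_B)$ requires connectedness information about $\{(\mathbf w,t_B):\mathbf w\in\Gamma_n,\ w_n\in f(t_B)\}$ that neither the inductive hypothesis nor the reaching property supplies. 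Until the strengthened hypothesis is stated exactly, verified for $G(f)$, and shown to pass from $\Gamma_n$ to $\Gamma_{n+1}$, this is a plausible strategy outline rather than a proof; the mathematical content of the theorem is concentrated precisely in the step you have deferred.
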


\noindent {\bf Notation.} Let $a<b$. Define $V_{[a,b]}=[a,b]\times[0,1]$.

\begin{theorem}\label{Intersection}\cite{Dunn}
Let $f:[0,1]\rightarrow 2^{[0,1]}$ be upper semicontinuous. Then the following are equivalent.
\begin{enumerate}
 \item $f$ has the intermediate value property.
 \item For every $a<b$, $G(f)\cap V_{[a,b]}$ is connected and $G(f)\cap V_{[a,b]}=\overline{G(f)\cap V_{(a,b)}}$.
 \end{enumerate}
\end{theorem}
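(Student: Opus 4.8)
The plan is to prove the two implications separately; in both directions the intermediate value property interacts with condition (2) through the second-coordinate projection $\pi_2$. For $(2)\Rightarrow(1)$: given distinct $x_1<x_2$, distinct $y_1\in f(x_1)$ and $y_2\in f(x_2)$, and $y$ strictly between $y_1$ and $y_2$, I would first invoke the identity $G(f)\cap V_{[x_1,x_2]}=\overline{G(f)\cap V_{(x_1,x_2)}}$ to choose sequences of points $(a_n,b_n),(c_n,d_n)\in G(f)\cap V_{(x_1,x_2)}$ with $(a_n,b_n)\to(x_1,y_1)$ and $(c_n,d_n)\to(x_2,y_2)$; in particular $a_n,c_n\in(x_1,x_2)$, $b_n\in f(a_n)$, $d_n\in f(c_n)$. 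For $n$ large we have $a_n<c_n$, the interval $\overline{a_n c_n}$ lies in $(x_1,x_2)$, and $y$ lies strictly between $b_n$ and $d_n$. Applying the connectedness half of (2) to $[a_n,c_n]$, the image $\pi_2[G(f)\cap V_{[a_n,c_n]}]$ is a subinterval of $[0,1]$ containing $b_n$ and $d_n$, hence containing $y$; this yields $x\in\overline{a_n c_n}\subseteq(x_1,x_2)$ with $y\in f(x)$. The role of condition (2) here is to upgrade the non-strict conclusion (an intermediate point lying only in $[x_1,x_2]$) that connectedness alone would give into the required strict one.

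For $(1)\Rightarrow(2)$ I would treat the two assertions in turn. For the identity $G(f)\cap V_{[a,b]}=\overline{G(f)\cap V_{(a,b)}}$, the inclusion $\supseteq$ is immediate because $G(f)$ is closed; for $\subseteq$ only the points $(a,y)$ with $y\in f(a)$ and $(b,y)$ with $y\in f(b)$ require argument, and these are limits of graph points whose first coordinate lies in $(a,b)$ by weak continuity of $f$ (a consequence of upper semicontinuity and the intermediate value property, Theorem~3.12 of \cite{Dunn}), supplemented at the domain endpoints $0$ and $1$ by a direct argument: if $(0,y)\in G(f)$ were not approached by graph points with positive first coordinate, a box neighborhood of $(0,y)$ would miss $G(f)$ over some interval $(0,\delta)$, and then the intermediate value property applied between $(0,y)$ and a point of a fiber $f(x)$ with $x\in(0,\delta)$ would place a graph point inside that box, a contradiction; the situation at $1$ is symmetric.

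For the connectedness of $G(f)\cap V_{[a,b]}$, the crucial reduction is to the claim that every fiber $f(x)$ is connected. Granting this, if $G(f)\cap V_{[a,b]}=H\sqcup K$ were a separation into nonempty closed (hence compact) sets, then $\pi_1[H]$ and $\pi_1[K]$ would be closed sets covering the connected interval $[a,b]$, so they would meet at some $x^*$, and the slices $\{y:(x^*,y)\in H\}$ and $\{y:(x^*,y)\in K\}$ would form a separation of $f(x^*)$ --- a contradiction. To prove each fiber connected, suppose $f(x_0)$ has a gap $(\alpha,\beta)$ with $\alpha,\beta\in f(x_0)$; upper semicontinuity provides $\rho>0$ such that $f(x)$ is disjoint from a slightly smaller interval $(\alpha',\beta')$ whenever $|x-x_0|<\rho$, where $\gamma:=(\alpha+\beta)/2\in(\alpha',\beta')$. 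Using weak continuity (and the direct endpoint argument) one approaches both $(x_0,\alpha)$ and $(x_0,\beta)$ by graph points lying on a single fixed side of $x_0$, obtaining $x_1\neq x_2$ on that side with values $y_1\in f(x_1)$ near $\alpha$ and $y_2\in f(x_2)$ near $\beta$, so that $\gamma$ is strictly between $y_1$ and $y_2$; the intermediate value property then forces $\gamma\in f(x)$ for some $x\in\overline{x_1 x_2}$, a point within $\rho$ of $x_0$, contradicting the choice of $\rho$.

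The step I expect to be the main obstacle is the connectedness half of $(1)\Rightarrow(2)$: recognizing that it reduces to connectedness of the individual fibers, and then handling the bookkeeping needed to approach a fiber point from one fixed side, so that the intermediate value property can be applied over a subinterval that avoids $x_0$. The behavior at the two endpoints of the domain, where the cited weak continuity does not directly apply and must be replaced by a hands-on intermediate-value argument, is the other place that requires care.
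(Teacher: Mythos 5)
The paper does not prove this theorem; it is imported from \cite{Dunn} as a black box, so there is no in-paper argument to compare yours against. Judged on its own, your proposal is correct and complete in outline. The $(2)\Rightarrow(1)$ direction is right: the closure identity supplies approximating graph points with first coordinates in $(x_1,x_2)$, and connectedness of $G(f)\cap V_{[a_n,c_n]}$ makes $\pi_2[G(f)\cap V_{[a_n,c_n]}]$ an interval containing $b_n$ and $d_n$, hence $y$, with the witness forced into $(x_1,x_2)$. In $(1)\Rightarrow(2)$, the reduction of connectedness to connectedness of the fibers is valid (a separation $H\sqcup K$ gives closed projections covering $[a,b]$, which must meet, and the connected fiber over a common point would be split), and your box argument at the domain endpoints correctly patches the fact that weak continuity, as defined, is only asserted at interior points. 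Two small remarks: first, in the fiber-connectedness step you do not actually need to approach both $(x_0,\alpha)$ and $(x_0,\beta)$ from one side --- once you have a single graph point $(x_1,y_1)$ with $x_1\in(x_0-\rho,x_0)\setminus\{x_0\}$ and $y_1<\gamma$, applying the intermediate value property directly to $(x_1,y_1)$ and $(x_0,\beta)$ already places $\gamma$ in some $f(x)$ with $x$ strictly between $x_1$ and $x_0$, inside the forbidden band; this avoids the bookkeeping needed to guarantee $x_1\neq x_2$. Second, your appeal to weak continuity via Theorem~3.12 of \cite{Dunn} is the same external citation the paper itself makes, and in any case it follows from the intermediate value property by the identical box argument you use at the endpoints, so no circularity arises.
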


\begin{theorem} \label{IVPCriterion}
Suppose $f:[0,1]\rightarrow 2^{[0,1]}$ is upper semicontinuous. Then $f$ has the intermediate value property if and only if $f$ is weakly continuous and $f(x)$ is connected for each $x$.
\end{theorem}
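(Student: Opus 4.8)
The plan is to prove the two implications separately. For the forward implication, suppose $f$ has the intermediate value property. Then $f$ is weakly continuous by Theorem~3.12 of \cite{Dunn} (quoted above), so it remains only to show that each value $f(x)$ is connected. Fix $x\in[0,1]$, and for small $\epsilon>0$ let $I_\epsilon=[\max\{0,x-\epsilon\},\min\{1,x+\epsilon\}]$, a nondegenerate subinterval of $[0,1]$. By Theorem~\ref{Intersection}, each $G(f)\cap V_{I_\epsilon}$ is a subcontinuum of $[0,1]^2$; these sets decrease as $\epsilon\downarrow 0$ and $\bigcap_{\epsilon>0}(G(f)\cap V_{I_\epsilon})=\{x\}\times f(x)$, so $\{x\}\times f(x)$ is a continuum and $f(x)$ is connected.

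For the converse, suppose $f$ is weakly continuous and every $f(x)$ is connected; I would verify the defining condition of the intermediate value property directly. Let $x_1\ne x_2$, $y_1\in f(x_1)$, $y_2\in f(x_2)$ with $y_1\ne y_2$, and let $y$ lie strictly between $y_1$ and $y_2$; we may assume $x_1<x_2$ and $y_1<y<y_2$ (the case $y_2<y<y_1$ is entirely symmetric). Put $A=\{x\in[x_1,x_2]:\min f(x)\le y\}$ and $B=\{x\in[x_1,x_2]:\max f(x)\ge y\}$; a subsequential-limit argument using the closedness of $G(f)$ shows $A$ and $B$ are closed. They cover $[x_1,x_2]$ and contain $x_1$, $x_2$ respectively, so $A\cap B\ne\emptyset$ by connectedness of $[x_1,x_2]$. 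Since each $f(x)$, being compact and connected, equals $[\min f(x),\max f(x)]$, we get $A\cap B=\{x\in[x_1,x_2]:y\in f(x)\}=:S$, so $S\ne\emptyset$.

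It then remains to produce a point of $S$ in the open interval $(x_1,x_2)$. Suppose not: then $y\notin f(x)$ for every $x\in(x_1,x_2)$, so each such $f(x)$ is a subinterval of $[0,1]$ avoiding $y$, and the sets $U^-=\{x\in(x_1,x_2):f(x)\subseteq[0,y)\}$ and $U^+=\{x\in(x_1,x_2):f(x)\subseteq(y,1]\}$ partition $(x_1,x_2)$ and are each open by upper semicontinuity (as $[0,y)$ and $(y,1]$ are open in $[0,1]$). Connectedness of $(x_1,x_2)$ forces one of them to be all of $(x_1,x_2)$. If $U^-=(x_1,x_2)$, apply weak continuity from the left at $x_2$ to the point $y_2\in f(x_2)$: there is a sequence $(\xi_n,\eta_n)\to(x_2,y_2)$ with $\xi_n<x_2$ and $\eta_n\in f(\xi_n)$; for large $n$, $\xi_n\in(x_1,x_2)$, so $\eta_n<y<y_2$, contradicting $\eta_n\to y_2$. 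If $U^+=(x_1,x_2)$, the symmetric argument with weak continuity from the right at $x_1$ applied to $y_1$ gives $\eta_n>y>y_1$ with $\eta_n\to y_1$, again impossible. Hence $S\cap(x_1,x_2)\ne\emptyset$, which is precisely the required conclusion.

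The step I expect to be most delicate is this final contradiction — specifically its boundary instances $x_1=0$ and $x_2=1$, which invoke weak continuity from the right at $0$ and from the left at $1$. So ``$f$ is weakly continuous'' must be read as including those one-sided conditions at the endpoints of $[0,1]$ (they cannot be dropped in general, as the function with $f(x)=\{0\}$ on $[0,1)$ and $f(1)=[0,1]$ illustrates). An alternative organization of the converse instead routes through Theorem~\ref{Intersection}: connectedness of $G(f)\cap V_{[a,b]}$ for every $a<b$ follows since a separation of it would project, under the first coordinate, onto a cover of $[a,b]$ by two closed sets, a common point $x_0$ of which would make the fiber $\{x_0\}\times f(x_0)$ disconnected; and $G(f)\cap V_{[a,b]}=\overline{G(f)\cap V_{(a,b)}}$ reduces to approximating the edge fibers $\{a\}\times f(a)$ and $\{b\}\times f(b)$ from within the open strip, which is exactly what one-sided weak continuity supplies — subject to the same endpoint caveat.
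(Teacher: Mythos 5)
Your proof is correct and reaches the same conclusion by a route that differs from the paper's in how the converse is organized. The paper argues by contraposition: assuming $f$ is weakly continuous but lacks the intermediate value property, it uses weak continuity from the right at $x_1$ and from the left at $x_2$ to pass to a subinterval $[x'_1,x'_2]$ over which the graph splits into two disjoint compact pieces (one below $y$, one above), and connectedness of $[x'_1,x'_2]$ then forces a point $c$ whose fiber $\{c\}\times f(c)$ meets both pieces, so $f(c)$ is disconnected. You argue the same implication directly: connectedness of the values lets you partition $(x_1,x_2)$ into the open sets $U^-$ and $U^+$, connectedness of $(x_1,x_2)$ forces one of them to be the whole interval, and one-sided weak continuity at $x_2$ (or $x_1$) yields the contradiction. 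The two arguments are dual --- the paper decomposes the graph and uses connectedness of the domain to break a fiber, while you decompose the domain and use connectedness of the fibers --- and they consume exactly the same hypotheses. Your write-up has the small merit of supplying a detail the paper elides, namely that the intermediate value property forces each $f(x)$ to be connected (via $\bigcap_{\epsilon>0}\bigl(G(f)\cap V_{I_\epsilon}\bigr)=\{x\}\times f(x)$ together with Theorem~\ref{Intersection}; this cannot be read off the definition directly since it requires $x_1\neq x_2$). One economy: the closed-cover argument with $A$ and $B$ showing $S\neq\emptyset$ is redundant, since your final contradiction already produces a point of $S$ inside $(x_1,x_2)$ without it.

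Your caveat about the endpoints is well taken, but it is not a gap in your argument relative to the paper's: the paper's own proof likewise invokes weak continuity from the right at $x_1$ and from the left at $x_2$, which the stated definition of ``weakly continuous'' (quantified only over $x\in(a,b)$) does not literally provide when $x_1=0$ or $x_2=1$; Example~\ref{NoIVPNotANF} shows the hypothesis must indeed be read as including the appropriate one-sided conditions at $0$ and $1$. With that reading, both proofs stand.
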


\begin{proof}
If $f$ has the intermediate value property, then $f$ is weakly continuous and $f(x)$ is connected for each $x$ by Theorem \ref{Intersection}.  To see the converse, suppose $f$ does not have the intermediate value property but that $f$ is weakly continuous.  We show that $f(x)$ fails to be connected for some $x \in [0,1]$.  Since $f$ does not have the intermediate value property, there are $(x_1, y_1), (x_2, y_2) \in G(f)$ and $y$ strictly between $y_1$ and $y_2$ such that $y \not \in f(x)$ for all $x$ strictly between $x_1$ and $x_2$.  There are four cases, all similar, corresponding to the orders of $x_1$ and $x_2$ and of $y_1$ and $y_2$.  We consider only the case in which $x_1 < x_2$ and $y_1 < y_2$.

Since $f$ is weakly continuous from the right at $x_1$, there is $(x'_1, y'_1) \in G(f)$ such that $x_1 < x'_1 < x_2$ and $y'_1 < y$.  Since $f$ is weakly continuous from the left at $x_2$, there is $(x'_2, y'_2) \in G(f)$ such that $x'_1 < x'_2 < x_2$ and $y < y'_2$.  It follows that, for each $x \in [x'_1, x'_2]$, $y \not \in f(x)$.  Since $y'_1 < y < y'_2$ it follows that $V_{[x'_1, x'_2]} \cap G(f)$ is the union of two disjoint compact sets $K_1$ and $K_2$.  Then $\pi_1[K_1] \cup \pi_1[K_2] = [x'_1, x'_2]$.  Consequently, there is $c \in \pi_1[K_1] \cap \pi_1[K_2]$.  It follows that $\{c\}\times f(c)$ is a subset of $K_1 \cup K_2$ that intersects both $K_1$ and $K_2$.  Hence $f(c)$ is not connected.
\end{proof}

\section{Full-Projection Property}  \label{c}

In this section we consider the full-projection property for inverse limits of upper semicontinuous functions with the intermediate value property.  It is shown elsewhere \cite{Ryden} that an inverse limit with upper semicontinuous bonding functions has the full-projection property if and only if its nonfissile points constitute a dense $G_{\delta}$ subset of the inverse limit.  In light of this, it is reasonable to wonder whether an equivalent or even sufficient condition might be to require that the bonding functions of the inverse limit be almost nonfissile.  Alone, almost nonfissile does not suffice; in tandem with surjectivity, lightness, and the intermediate value property, it does.  Theorem~\ref{FPP} establishes this, and Theorem~\ref{FPP2} provides a generalization, that any subcontinuum with nondegenerate projections in all coordinates may also be written as an inverse limit with the full-projection property by restricting the bonding functions appropriately.  These are the main results of Section~\ref{c.2}

In Section~\ref{c.1}, we present results intended to provide intuition regarding the structure of almost nonfissile functions and their graphs.  In Proposition~\ref{c.c}, it is shown that an upper semicontinuous interval function is almost nonfissile if and only if it is irreducible with respect to domain.  B.R.~Williams \cite{Williams} defined ``irreducible with respect to domain" to study the full-projection property.  Iztok Bani\v{c}, Matev\v{z} \v{C}repnjak, Matej Merhar, and Uro\v{s} Milutinovi\'{c} \cite{BanicCrepnjakMerharMilutinovic} studied the property further and introduced variations to Williams's definition.



\subsection{The equivalence of almost nonfissile to irreducibility with respect to domain}  \label{c.1}

\begin{lemma}\label{ANFDomain}
Let $f:[0,1]\rightarrow 2^{[0,1]}$ be an upper semicontinuous function.  Then the set of nonfissile points of $f$ is a $G_{\delta}$ subset of $[0,1]$.   If $\text{int }G(f)=\emptyset$, then it is a dense $G_{\delta}$ subset of $[0,1]$.
\end{lemma}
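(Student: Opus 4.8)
The plan is to show both parts by describing the complement and the non-fissile set directly. For the first claim, recall that $x$ is a fissile point of $f$ exactly when $|f(x)| > 1$; since $f(x)$ is compact, this happens precisely when the diameter of $f(x)$ is positive. So the idea is to write the fissile set as a countable union of closed sets, i.e.\ as an $F_\sigma$ set, whence its complement (the non-fissile set) is $G_\delta$. First I would, for each $n \in \mathbb{N}$, set
\[
F_n = \left\{ x \in [0,1] : \operatorname{diam} f(x) \geq \tfrac{1}{n} \right\},
\]
and argue that each $F_n$ is closed. This should follow from upper semicontinuity: if $x_k \to x$ with $\operatorname{diam} f(x_k) \geq 1/n$, pick $a_k, b_k \in f(x_k)$ with $|a_k - b_k| \geq 1/n$; passing to a subsequence, $a_k \to a$ and $b_k \to b$ with $|a-b| \geq 1/n$, and $(x,a),(x,b) \in G(f)$ because $G(f)$ is closed, so $\operatorname{diam} f(x) \geq 1/n$ and $x \in F_n$. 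Then the set of fissile points of $f$ is $\bigcup_{n} F_n$, an $F_\sigma$ set, so the set of non-fissile points is $\bigcap_n ([0,1] \setminus F_n)$, a $G_\delta$ set. This also substantiates the remark's claim that the fissile set of $f$ is $F_\sigma$.

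For the second claim, assume $\operatorname{int} G(f) = \emptyset$; it remains to show the non-fissile set is dense, equivalently that each $F_n$ has empty interior (a countable union of closed sets with empty interior is still only co-meager, but here I want the stronger statement that the non-fissile set is dense, which by Baire follows once each $F_n$ is nowhere dense — since each $F_n$ is closed, nowhere dense just means empty interior). Suppose toward a contradiction that some $F_n$ contains an open interval $(a,b)$. The key observation is that on $(a,b)$ every value $f(x)$ is a compact set of diameter at least $1/n$; I then want to produce an open box inside $G(f)$. The natural approach: for $x \in (a,b)$, let $m(x) = \min f(x)$ and $M(x) = \max f(x)$, so $M(x) - m(x) \geq 1/n$. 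Upper semicontinuity gives that $m$ is lower semicontinuous and $M$ is upper semicontinuous (standard), and moreover for each $x$ the whole interval $[m(x), M(x)]$ need not lie in $f(x)$ in general — but here I may invoke the hypothesis only via closedness of $G(f)$.

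The main obstacle is exactly this: without connectedness of $f(x)$, a large diameter does not force an interval inside $f(x)$, so getting an open box inside $G(f)$ needs care. I would handle it by a Baire-category argument internal to the fibers: restrict attention to $G(f) \cap ((a,b) \times [0,1])$, which is a closed (hence Polish) subset of the plane with empty interior by hypothesis. For a dense $G_\delta$ set of $x \in (a,b)$, the fiber $f(x)$ has empty interior in $[0,1]$ (otherwise one builds an interior point of $G(f)$ by a standard argument using upper semicontinuity to fatten a fiber-interval into a box). But this does not yet contradict $\operatorname{diam} f(x) \geq 1/n$. The cleaner route, which I would ultimately take, is to observe that if $\operatorname{diam} f(x) \geq 1/n$ on an interval, then by considering the "graph above" and "graph below" a level $t$ chosen in the gap one can, using upper semicontinuity and a connectedness/covering argument on $[a,b] = \pi_1[\text{upper part}] \cup \pi_1[\text{lower part}]$ together with the same $K_1, K_2$ decomposition idea used in the proof of Theorem~\ref{IVPCriterion}, force a point whose fiber meets both parts and then iterate to trap an open box — contradicting $\operatorname{int} G(f) = \emptyset$. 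I expect the write-up to localize this to a short lemma-style paragraph; the delicate point is choosing the level $t$ uniformly on a subinterval so that the box has nonempty interior, which is where the diameter bound $1/n$ (rather than mere positivity) does the work.
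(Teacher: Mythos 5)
Your first half is correct and matches the paper exactly: the sets $F_n=\{x:\operatorname{diam}f(x)\ge 1/n\}$ are closed by upper semicontinuity (the paper calls them $D_n$), their union is the fissile set, and so the nonfissile set is $G_\delta$. No issues there.

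The second half has a genuine gap, and you put your finger on it yourself: without connectedness of the fibers, $\operatorname{diam}f(x)\ge 1/n$ on an interval does not yield an open box inside $G(f)$. The proposed repair --- choosing a level $t$ ``in the gap,'' splitting the graph into upper and lower parts, and reusing the $K_1,K_2$ decomposition from Theorem~\ref{IVPCriterion} to ``iterate to trap an open box'' --- is never carried out, and it cannot be: that decomposition only ever produces a point whose fiber is disconnected, which is no contradiction with $\operatorname{int}G(f)=\emptyset$. In fact the density claim is false in the stated generality: the constant function $f(x)=\{0,1\}$ is upper semicontinuous, $G(f)$ is two horizontal segments with empty interior, yet every point is fissile, so the nonfissile set is empty. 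Hence no argument closes this gap without an additional hypothesis forcing each $f(x)$ to be an interval (e.g.\ the intermediate value property, via Theorem~\ref{IVPCriterion}), which is the setting in which the density statement is actually applied. For comparison, the paper's proof crosses exactly this bridge in one line: having trapped $f(x)$ inside $(c-\epsilon/8,\,d+\epsilon/8)$ with $\operatorname{diam}f(x)\ge\epsilon$, it concludes $f(x)\supseteq[c+\epsilon/4,\,d-\epsilon/4]$ --- an inference valid only when $f(x)$ is connected. So your hesitation was the right mathematical instinct; the step you declined to write down is precisely the step that needs (and in the paper silently uses) connectedness of the point-images.
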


\begin{proof}
Define $A=\{x\in[0,1]:|f(x)|>1\}$, and, for each $n\in\mathbb{N}$, define $$D_{n}=\left\{x\in[0,1]:{\rm diam}\ f(x)\geq\frac{1}{n}\right\}.$$
As $f$ is upper semicontinuous, $D_{n}$ is closed for each $n$. Note that $A=\bigcup_{n\in\mathbb{N}}D_{n}$, making $A$ an $F_{\sigma}$ set.  It follows that the set of nonfissile points of $f$ is a $G_{\delta}$ set.

We prove the second statement by contraposition.  To that end, suppose the set of nonfissile points of $f$ is not dense or, equivlaently, that $A$ is nonmeager.  Then there is some fixed $n$ such that $D_{n}$ is not nowhere dense, i.e. ${\rm int}\ D_{n}\neq\emptyset$. So there is some nondegenerate interval $[a,b]\subseteq D_{n}$.

Let $\epsilon=\inf_{x\in[a,b]}{\rm diam}\ f(x)\geq\frac{1}{n}$. Then for any $\eta>0$, there exists $z\in[a,b]$ such that $\epsilon\leq {\rm diam}\ f(z)<\epsilon+\eta$. In particular, for $\eta=\frac{\epsilon}{8}$, there is a $z\in[a,b]$ such that ${\rm diam}\ f(z)<\frac{9\epsilon}{8}$. We assume the case $z\in[a,b)$, as the argument for $z=b$ follows a similar argument. Let $c=\min f(z)$ and $d=\max f(z)$. Since $f$ is upper semicontinuous, there is some $\delta>0$ such that if $x\in(z,z+\delta)$, then $f(x)\subseteq \left(c-\frac{\epsilon}{8},d+\frac{\epsilon}{8}\right)$.


Let $x\in(z,z+\delta)$. As ${\rm diam}\ f(x)\geq\epsilon$, $f(x)\subseteq\left(c-\frac{\epsilon}{8},d+\frac{\epsilon}{8}\right)$, and ${\rm diam} \left(c-\frac{\epsilon}{8},d+\frac{\epsilon}{8}\right)<\frac{11\epsilon}{8}$, $f(x)\supseteq\left[c+\frac{\epsilon}{4},d-\frac{\epsilon}{4}\right]$, an interval with nonemtpy interior. As $x$ was arbitrary, the set $$U=\left\{(x,y):z<x<z+\delta \text{ and } y\in\left(c+\frac{\epsilon}{4},d-\frac{\epsilon}{4}\right)\right\}$$ is an open subset of $G(f)$, so $G(f)$ has nonempty interior. Therefore, by contraposition, if ${\rm int} \ G(f)=\emptyset$, then $A$ is meager. So the set of points in $[0,1]$ on which $f$ is single-valued is a dense $G_{\delta}$.
\end{proof}

\begin{definition}
\rm{A function $f:[0,1]\rightarrow 2^{[0,1]}$ is \emph{irreducible with respect to domain} if no closed subgraph of $G(f)$ has full domain, that is, $\pi_{1}[H]\neq [0,1]$ for every closed set $H\subsetneq G(f)$.}
\end{definition}

\begin{proposition}  \label{c.c}
Let $f:[0,1]\rightarrow 2^{[0,1]}$ be an upper semicontinuous function. Then $f$ is almost nonfissile if and only if $f$ is irreducible with respect to domain.
\end{proposition}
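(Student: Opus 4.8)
The plan is to prove both implications by contraposition, translating each failure into the other. The bridge between the two notions is the set $A$ of fissile points of the domain and its relation to the set of fissile points of the graph; the key technical fact, already available from Lemma~\ref{ANFDomain}, is that $A = \bigcup_n D_n$ with each $D_n = \{x : \operatorname{diam} f(x) \ge 1/n\}$ closed.

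For the direction ``irreducible with respect to domain $\Rightarrow$ almost nonfissile,'' I would argue the contrapositive: suppose the nonfissile points of $G(f)$ do \emph{not} form a dense $G_\delta$ in $G(f)$. By Lemma~\ref{ANFDomain} (applied with the Baire category theorem, since the set of fissile points of $G(f)$ is $F_\sigma$), this forces the fissile points of $G(f)$ to be nonmeager in $G(f)$, hence some closed set of fissile points of $G(f)$ has nonempty interior in $G(f)$. That gives an open box $U \times W \subseteq G(f)$ with $W$ nondegenerate and every $x \in U$ fissile. I then want to build a proper closed subgraph with full domain. The idea is to ``shave off'' part of this box: over the interval $\overline U$, replace $f$ by (say) the lower half of its values, while keeping all of $G(f)$ outside a slightly smaller interval, and patch continuously. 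Concretely, set $H = \big(G(f) \setminus (U' \times W')\big)^{-}$ for suitable slightly-shrunk $U' \Subset U$, $W' \Subset W$; upper semicontinuity of $f$ plus the fact that the box lies in the interior of $G(f)$ ensures $H$ is closed, properly contained in $G(f)$, and still projects onto all of $[0,1]$ — the removed portion is interior, so nothing in the domain is lost. Hence $f$ is not irreducible with respect to domain.

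For the converse, ``almost nonfissile $\Rightarrow$ irreducible with respect to domain,'' again by contraposition: suppose some closed $H \subsetneq G(f)$ has $\pi_1[H] = [0,1]$. Pick $(x_0, y_0) \in G(f) \setminus H$; since $H$ is closed, there is a basic open box $B = (a,b) \times (c,d)$ around $(x_0,y_0)$ disjoint from $H$. Because $\pi_1[H] = [0,1]$, for every $x \in (a,b)$ there is a point $(x, z) \in H$ with $z \notin (c,d)$, i.e. $f(x)$ contains a point outside $(c,d)$; but $f(x)$ also contains $y_0'$-nearby points inside $B$ for $x$ near $x_0$... more carefully: shrink to an interval $I \ni x_0$ and use weak continuity / upper semicontinuity to guarantee that for all $x$ in a nondegenerate subinterval $I' \subseteq I$, $f(x)$ meets the horizontal slab $\{(x,y) : y \in (c', d')\}$ for some fixed subslab, while also $f(x)$ contains a point outside it (from $H$). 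That makes $f(x)$ nondegenerate for every $x \in I'$, so $I' \subseteq D_n$ for some fixed $n$, giving $\operatorname{int} D_n \ne \emptyset$. Then the computation in the proof of Lemma~\ref{ANFDomain} shows $\operatorname{int} G(f) \ne \emptyset$, and in fact produces an open box $V \subseteq G(f)$ of fissile points over an interval — that open box is disjoint from a nowhere-dense portion only if its fissile points are dense in it, which they are, so $f$ cannot be almost nonfissile (the nonfissile points of $G(f)$ miss an open subset of $G(f)$).

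The main obstacle is the first direction: turning ``a box of fissile points sits inside $G(f)$'' into an honest \emph{proper closed subgraph with full domain}. The delicate point is that removing an open box from $G(f)$ might destroy the domain (if the box is not truly interior) or fail to be closed (if we remove too much at the boundary), so the shrinking $U' \Subset U$, $W' \Subset W$ and the verification that $H$ remains a subgraph over all of $[0,1]$ — i.e. that over every $x$, after the excision $f(x)$ is still nonempty — must be done carefully using that the original box is in $\operatorname{int} G(f)$; I expect to invoke Lemma~\ref{ANFDomain}'s converse (no interior $\Rightarrow$ nonfissile points dense) to get the interior box in the first place, which keeps the argument self-contained.
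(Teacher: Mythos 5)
There is a genuine gap, and it sits at the heart of your first direction. From ``the fissile points of $G(f)$ are nonmeager in $G(f)$'' you conclude that ``some closed set of fissile points has nonempty interior in $G(f)$,'' which is fine, but then you jump to ``that gives an open box $U\times W\subseteq G(f)$ with every $x\in U$ fissile.'' Interior \emph{relative to} $G(f)$ does not produce a planar box inside $G(f)$. The paper's own Example~\ref{NoIVPNotANF} ($f(x)=\{0\}$ for $x<1$, $f(1)=[0,1]$) is a counterexample to your step: $f$ is not almost nonfissile, the fissile points $\{1\}\times[0,1]$ are a relatively open subset of $G(f)$, yet $G(f)$ contains no two-dimensional box and the only fissile point of the domain is $x=1$, so there is no interval $U$ of fissile points and nothing for your excision to remove. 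The function is nevertheless reducible with respect to domain ($[0,1]\times\{0\}$ is a proper closed subgraph with full domain), but your construction cannot find this witness. The mechanism the paper uses instead is to take $A$, the set of nonfissile points of $G(f)$, note via Lemma~\ref{ANFDomain} that $\pi_1[A]$ is dense when $\operatorname{int}G(f)=\emptyset$ (the nonempty-interior case being handled separately, exactly by your excision idea, which is valid there), so that $\overline{A}$ is always a closed subgraph with full domain; if $f$ is not almost nonfissile then $\overline{A}\subsetneq G(f)$ and reducibility is immediate. Your box-excision covers only the $\operatorname{int}G(f)\neq\emptyset$ sub-case.

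Your second direction also has a flaw, though a repairable one: you invoke ``weak continuity / upper semicontinuity'' to guarantee that $f(x)$ meets a horizontal slab around $y_0$ for all $x$ in a nondegenerate subinterval. Upper semicontinuity alone gives no such lower bound on the values, and weak continuity is not a hypothesis of this proposition (it only follows from the intermediate value property, which is not assumed here). The fix is simpler than what you attempt: since $H$ has full domain and is disjoint from the box $B$, every point $(x,y)\in B\cap G(f)$ has a companion $(x,z)\in H$ with $z\neq y$, so every point of $B\cap G(f)$ is already a fissile point of $G(f)$; hence the nonfissile points of $G(f)$ fail to be dense and $f$ is not almost nonfissile. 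No interval of fissile domain points, no appeal to $\operatorname{int}D_n$ or $\operatorname{int}G(f)$, is needed. Equivalently, and as the paper argues directly, any closed subgraph with full domain must contain every nonfissile point of $G(f)$, hence contains $\overline{A}$, which equals $G(f)$ when $f$ is almost nonfissile.
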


\begin{proof}
First note that if $\text{int }G(f) \not = \emptyset$, then $f$ is neither almost nonfissile nor irreducible with respect to domain.  Suppose $\text{int }G(f)=\emptyset$.   Let ${\rm Fi}(f)$ be the set of fissile points of $G(f)$ and $A=G(f)\setminus {\rm Fi}(f)$, i.e. the set of nonfissile points of $G(f)$. By Lemma \ref{ANFDomain}, $\pi_{1}[A]$ is a dense $G_{\delta}$ subset of $[0,1]$. Then $\overline{A}$ is a closed subgraph of $G(f)$ with full domain. So if $f$ is irreducible with respect to domain, $\overline{A}=G(f)$, making $f$ almost nonfissile. Conversely, if $f$ is almost nonfissile, then as $A$ is composed of nonfissile points, any closed subgraph with full domain must contain $A$ and hence contains $\overline{A}$. Thus if $f$ is almost nonfissile and $H$ is a closed subgraph of $G(f)$ with full domain, $H\supseteq \overline{A}=G(f)$, making $f$ irreducible with respect to domain.
\end{proof}

\subsection{The full-projection property in inverse limits of maps with the intermediate value property}  \label{c.2}

\begin{theorem}  \label{c.d}  \cite{Ryden}
Suppose $\{X_n, f_n\}$ is an inverse sequence and $X = \ilim \{X_n, f_n\}$.  Then $X$ has the full-projection property if and only if the set of fissile points of $X$ is a meager $F_{\sigma}$ set.
\end{theorem}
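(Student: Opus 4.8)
The content of the statement is really the equivalence of the full-projection property with the fissile set being \emph{meager}, since that set is automatically an $F_{\sigma}$. Indeed, adapting the diameter argument from the proof of Lemma~\ref{ANFDomain} to a metric on the continuum $X_n$, the set $D_{n,k}=\{z\in X_n:\operatorname{diam}f_n(z)\ge \tfrac1k\}$ is closed for each $n,k$ by upper semicontinuity, the fissile points of $f_n$ form the $F_{\sigma}$ set $\bigcup_k D_{n,k}$, and the fissile set of $X$ is $\bigcup_{n,k}\pi_n^{-1}[D_{n,k}]$, a countable union of closed sets. Since $X$ is compact metric, hence a Baire space, ``the fissile set is a meager $F_{\sigma}$'' is equivalent to ``the set $N$ of nonfissile points of $X$ is a dense $G_{\delta}$''. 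So it suffices to prove: $X$ has the full-projection property if and only if $N$ is dense in $X$.

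For the ``if'' direction I would isolate the following lemma: \emph{if $K$ is a subcontinuum of $X$ with $\pi_i[K]=X_i$ for infinitely many $i$, then $N\subseteq K$.} To see this, let $p=(p_0,p_1,\dots)$ be nonfissile, so that $f_j(p_j)=\{p_{j-1}\}$ for every $j\ge 1$. Given $M$, choose $n\ge M$ with $\pi_n[K]=X_n$ and pick $k\in K$ with $k_n=p_n$; then $k_{n-1}\in f_n(k_n)=f_n(p_n)=\{p_{n-1}\}$, so $k_{n-1}=p_{n-1}$, and inductively $k_i=p_i$ for all $i\le n$. Thus $K$ contains points agreeing with $p$ on arbitrarily long initial segments, and since $K$ is closed, $p\in K$. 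Now if $N$ is dense and $K$ is a subcontinuum with infinitely many full projections, then $K\supseteq\overline{N}=X$, so $K=X$; this is precisely the full-projection property.

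The ``only if'' direction I would prove by contraposition: assuming $N$ is not dense, I would produce a \emph{proper} subcontinuum of $X$ having full projection in infinitely many coordinates. The nonempty open set $X\setminus\overline{N}$ lies inside the $F_{\sigma}$ fissile set $\bigcup_{n,k}\pi_n^{-1}[D_{n,k}]$, so by the Baire category theorem some $\pi_n^{-1}[D_{n,k}]$ is not nowhere dense; being closed, it has nonempty interior. Hence there are a nonempty open set $W\subseteq X$ and indices $j,k$ with $\operatorname{diam}f_j(x_j)\ge\tfrac1k$ for every $x\in W$, and (shrinking $W$) we may take $W=\bigcap_{i\in F}\pi_i^{-1}[O_i]$ for a finite set $F$ and open sets $O_i\subseteq X_i$, so in particular $W$ is determined by finitely many coordinates. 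This ``uniform thickness'' of the bonding function $f_j$ over $W$ is the leverage point.

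The remaining step --- and the main obstacle --- is to convert this uniform thickness into a proper subcontinuum with cofinitely many full projections. The mechanism I would use is to replace $f_j$ by an upper semicontinuous multifunction $g$ with $\emptyset\ne g(z)\subseteq f_j(z)$ for every $z$, chosen so that $g$ omits over the relevant region a point of $X_{j-1}$ that $f_j$ genuinely realizes in $X$ (possible exactly because of the thickness) while keeping $g=f_j$ away from that region; the inverse limit $K$ of the sequence obtained from $\{X_i,f_i\}$ by substituting $g$ for $f_j$ is then a closed proper subset of $X$, and it still surjects onto $X_i$ for every $i\ge j$, because coordinates below level $j$ may be freely re-chosen (each $f_n(z)$ is nonempty) and the surjectivity of the tail bonding functions lets partial sequences be extended upward. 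The delicate point --- and where I expect the real work to lie --- is to arrange the substitution so that $K$ is \emph{connected}: one wants to split $f_j$ over the region into relatively clopen pieces of its fibers, retain one piece, and absorb the boundary of the region by further shrinking $W$. Example~\ref{NoIVPNotANF}, in which $X$ is a null chain of arcs converging to a point and the witnessing proper subcontinuum is simply a tail of that chain, is the prototype to keep in mind for how the construction should behave.
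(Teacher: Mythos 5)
The paper does not actually prove this statement --- it is imported from \cite{Ryden} --- so there is no in-paper argument to measure yours against; judging it on its own terms, it is half complete. The reduction to ``the nonfissile set $N$ is dense'' is sound: the fissile set of $X$ is indeed the $F_{\sigma}$ set $\bigcup_{n,k}\pi_n^{-1}[D_{n,k}]$, and in the compact metric (hence Baire) space $X$ its meagerness is equivalent to density of the complementary $G_{\delta}$. Your ``if'' direction is correct and complete: the coordinatewise pull-back argument showing that every nonfissile point belongs to any subcontinuum with infinitely many full projections, followed by taking closures, is exactly the right mechanism, and density of $N$ then forces $K=X$.

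The ``only if'' direction, however, contains a genuine gap, which you yourself flag. Once Baire category hands you a nonempty open $W$ and an index $j$ with $\operatorname{diam} f_j(x_j)\ge 1/k$ for all $x\in W$, the entire content of this implication is to manufacture a proper sub\emph{continuum} of $X$ with infinitely many full projections; a proper closed subset does not refute the full-projection property. Your plan of shrinking $f_j$ to a sub-multifunction $g$ over the relevant region leaves precisely that point unargued: there is no reason the resulting inverse limit $K$ is connected, and the proposed device of retaining one ``relatively clopen piece'' of each fiber is unavailable in general, since $f_j(x_j)$ may be a nondegenerate continuum with no proper relatively clopen subsets, and even where it exists the chosen pieces need not vary upper semicontinuously or glue to a connected limit. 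A secondary issue: your claim that $K$ ``still surjects onto $X_i$ for every $i\ge j$'' silently invokes surjectivity of the tail bonding functions, which is not among the stated hypotheses. Since producing the connected witness is exactly where the difficulty of this theorem lives, the argument as written does not establish the converse direction.
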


\begin{lemma}\label{Intervals}
Suppose $f:[0,1]\rightarrow 2^{[0,1]}$ is a surjective, almost nonfissile, upper semicontinuous map with the intermediate value property. If $f(x)$ is nondegenerate for some interior point $x$ of $[0,1]$, then there are sequences $L_{1}$, $L_{2}$, \dots, and $R_{1}$, $R_{2}$, \dots of nondegenerate closed subintervals of $[0,1]$ such that
\begin{enumerate}
\item $z<x$ for all $z\in\bigcup L_{n}$ and $z>x$ for all $z\in\bigcup R_{n}$,
\item $\lim L_{n}=\{x\}$ and $\lim R_{n} =\{x\}$,
\item $\lim f[L_{n}]=f(x)$ and $\lim f[R_{n}]=f(x)$.
\end{enumerate}
\end{lemma}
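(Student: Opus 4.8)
The plan is to construct the sequences $L_n$ and $R_n$ by exploiting the fact that $f$ is weakly continuous (which follows from Theorem~\ref{IVPCriterion} since $f$ has the intermediate value property) together with the connectedness of $f(x)$ and the structure provided by Theorem~\ref{Intersection}. Fix the interior point $x$ with $f(x) = [c,d]$ nondegenerate. The idea is that weak continuity from the left and from the right at $x$ gives points of $G(f)$ arbitrarily close to both $(x,c)$ and $(x,d)$, on the appropriate side of $x$; then the intermediate value property (via Theorem~\ref{Intersection}, specifically that $G(f) \cap V_{[a,b]}$ is connected and equals $\overline{G(f) \cap V_{(a,b)}}$) should let me fill in a whole interval's worth of values over a small interval to the left or right of $x$.

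The key steps, in order, are as follows. First, I would use weak continuity from the left at $x$ applied to $y = c$ and to $y = d$ to obtain, for each $\epsilon > 0$, points $(s_1, t_1), (s_2, t_2) \in G(f)$ with $s_1, s_2 \in (x - \epsilon, x)$, $t_1$ within $\epsilon$ of $c$, and $t_2$ within $\epsilon$ of $d$. Second, letting $a = \min\{s_1, s_2\}$ and $b = \max\{s_1, s_2\}$ (both in $(x-\epsilon, x)$), I would apply Theorem~\ref{Intersection} to the strip $V_{[a,b]}$: since $(s_1,t_1)$ and $(s_2,t_2)$ both lie in the connected set $G(f) \cap V_{[a,b]}$ and have second coordinates near $c$ and near $d$ respectively, connectedness forces $\pi_2[G(f) \cap V_{[a,b]}]$ to be a connected set — an interval — containing points near $c$ and near $d$, hence containing $[c + \epsilon, d - \epsilon]$. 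Third, I would set $L_n$ to be this interval $[a,b]$ for $\epsilon = 1/n$ (shrinking if necessary to ensure it is nondegenerate and to its left of $x$); conditions (1) and (2) are then immediate from $L_n \subseteq (x - 1/n, x)$, and for condition (3) I need $\lim f[L_n] = f(x)$ in the Hausdorff metric. The containment $f[L_n] \supseteq [c + 1/n, d - 1/n] \to [c,d] = f(x)$ handles one direction; for the reverse — that $f[L_n]$ does not stray far above $d$ or below $c$ — I would invoke upper semicontinuity of $f$ at $x$, which guarantees that for $x'$ sufficiently close to $x$, $f(x') \subseteq (c - \delta, d + \delta)$, so choosing $L_n$ inside that neighborhood of $x$ gives $f[L_n] \subseteq (c - 1/n, d + 1/n)$ eventually. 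The construction of $R_n$ is symmetric, using weak continuity from the right.

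The main obstacle I anticipate is the second step — extracting a full subinterval of values $[c + \epsilon, d-\epsilon]$ from the connectedness of $G(f) \cap V_{[a,b]}$. Connectedness of that set gives connectedness of its projection $\pi_2[G(f)\cap V_{[a,b]}]$ to $[0,1]$, which is an interval; since it contains a point near $c$ and a point near $d$ it contains $[c+\epsilon, d - \epsilon]$. But I must be careful that this interval of second-coordinates is genuinely realized over the small base interval $[a,b] \subset (x-\epsilon,x)$ and not, say, degenerately — here the hypothesis that $f$ is light and almost nonfissile may be needed to rule out pathologies, or at least to guarantee $[a,b]$ can be taken nondegenerate (if $a = b$ one shrinks $\epsilon$ and re-picks using that $f(x)$ is nondegenerate and $x$ is interior, so there genuinely are distinct nearby base points carrying distinct values). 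A secondary subtlety is the edge case where $x$ is such that one cannot approach from one side within $[0,1]$ — but $x$ is assumed interior, so both one-sided approaches are available, and weak continuity is exactly the tool guaranteeing each side contributes. I expect the almost-nonfissile and lightness hypotheses to be used precisely to ensure the intervals $L_n, R_n$ can be chosen nondegenerate and that the Hausdorff convergence in (3) is clean.
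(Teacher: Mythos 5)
Your proposal follows essentially the same route as the paper: use weak continuity from the left (resp.\ right) at $x$ to find base points in $(x-\epsilon,x)$ whose images approximate the two endpoints of $f(x)=[c,d]$, take $L_n$ to be the interval between them, get $f[L_n]\supseteq[c+\epsilon,d-\epsilon]$ from connectedness of the image (the intermediate value property), and get the reverse Hausdorff bound from upper semicontinuity/closedness of the graph. The one spot you leave soft --- ensuring $s_1\neq s_2$ so that $L_n$ is nondegenerate --- is exactly where the paper invokes the almost nonfissile hypothesis: it chooses the approximating points $(\alpha_n,a_n)$ and $(\beta_n,b_n)$ to be nonfissile points of $G(f)$, so $f(\alpha_n)=\{a_n\}\neq\{b_n\}=f(\beta_n)$ forces $\alpha_n\neq\beta_n$. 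Your ``re-pick'' fix can be made rigorous without that hypothesis (weak continuity supplies a whole sequence of distinct base points converging to $x$ from the left, so two distinct ones can always be selected), but as written the justification ``shrink $\epsilon$ and re-pick'' is not an argument; also note that lightness, which you mention as possibly needed, is not a hypothesis of this lemma and plays no role here.
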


\begin{proof}
We construct the sequence $L_{1}$, $L_{2}$,\dots only and note that the construction of $R_{1}$, $R_{2}$,\dots is similar. Let $a$ and $b$ denote the points such that $f(x)=[a,b]$. Since upper semicontinuous maps with the intermediate value property are weakly continuous by Theorem \ref{IVPCriterion}, there are sequences $\alpha_{1}$, $\alpha_{2}$,\dots; $\beta_{1}$, $\beta_{2}$,\dots; $a_{1}$, $a_{2}$,\dots; and $b_{1}$, $b_{2}$, \dots such that each of the following is true:
\begin{itemize}
\item $a_{n}\in f(\alpha_{n})$ for all $n$,
\item $b_{n}\in f(\beta_{n})$ for all $n$,
\item $\{\alpha_{n}\}$ and $\{\beta_{n}\}$ converge to $x$ from the left,
\item $\{a_{n}\}$ and $\{b_{n}\}$ converge to $a$ and $b$ respectively.
\end{itemize}
Furthermore, since $f$ is almost nonfissile, the sequences may be chosen so that $f(\alpha_{n})=\{a_{n}\}$ and $f(\beta_{n})=\{b_{n}\}$. It follows that, for sufficiently large $n$, $a_{n}$ and $b_{n}$ are distinct. Finally, taking subsequences if necessary, the sequences may be chosen so that $a_{n}<\frac{a+b}{2}<b_{n}$ for each $n\in\mathbb{N}$ and $\alpha_{n},\beta_{n}<\alpha_{n+1},\beta_{n+1}$ for each $n\in\mathbb{N}$.

Since $a_{n}\neq b_{n}$ for each $n\in\mathbb{N}$, it follows that $\alpha_{n}\neq\beta_{n}$ for each $n\in\mathbb{N}$. For each $n\in\mathbb{N}$, define $L_{n}$ to be the nondegenerate closed interval with endpoints $\alpha_{n}$ and $\beta_{n}$. Then $L_{1}$, $L_{2}$,\dots satisfies $(1)$ and $(2)$. To see that it satisfies $(3)$, note that $\lim\ \inf f[L_{n}]$ contains both $a$ and $b$, and hence $f(x)$, since $a_{n}\rightarrow a$ and $b_{n}\rightarrow b$ as $n\rightarrow\infty$. On the other hand, $\lim\ \sup f[L_{n}]\subseteq f(x)$ since the graph of $f$ is closed. Hence $\lim f[L_{n}]=f(x)$, and $\{L_{n}\}$ satisfies $(3)$.
\end{proof}

\begin{lemma}\label{OpenSet}
Suppose $f:[0,1]\rightarrow 2^{[0,1]}$ is an almost nonfissile upper semicontinuous function with the intermediate value property. If $y\in f(x)$, and $D_{x}$ and $D_{y}$ are open sets such that $y\in D_{y}$ and $x\in \overline{D_{x}}$, then there is an open subset $D$ of $D_{x}$ such that $f[D]\subset D_{y}$.
\end{lemma}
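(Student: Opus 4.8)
The plan is to reduce everything to finding a single \emph{nonfissile} point of $G(f)$ lying over $D_x$ and having second coordinate in $D_y$, and then to use upper semicontinuity. Concretely, suppose we have $(\alpha,a)\in G(f)$ with $\alpha\in D_x$, $f(\alpha)=\{a\}$, and $a\in D_y$. Since $D_y$ is open and $f(\alpha)=\{a\}\subseteq D_y$, upper semicontinuity of $f$ at $\alpha$ yields an open $V\ni\alpha$ with $f[V]\subseteq D_y$; then $D:=V\cap D_x$ is open, contained in $D_x$, contains $\alpha$, and satisfies $f[D]\subseteq f[V]\subseteq D_y$. So it suffices to produce such an $(\alpha,a)$.

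I would then reduce the search for $(\alpha,a)$ to a nonemptiness statement. The set $G(f)\cap(D_x\times D_y)$ is open in $G(f)$, and since $f$ is almost nonfissile the nonfissile points of $G(f)$ form a dense subset of $G(f)$; hence every nonempty relatively open subset of $G(f)$ contains a nonfissile point. Thus it is enough to show $G(f)\cap(D_x\times D_y)\neq\emptyset$, i.e.\ that $f(\xi)\cap D_y\neq\emptyset$ for some $\xi\in D_x$. Two sub‑cases are immediate: if $x\in D_x$ then $(x,y)$ itself works; and if $f(x)\subseteq D_y$ (in particular if $f(x)=\{y\}$, in which case $(x,y)$ is already nonfissile) then by upper semicontinuity $\{t:f(t)\subseteq D_y\}$ is an open neighborhood of $x$, which meets $D_x$ because $x\in\overline{D_x}$. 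So one is left with the case $x\notin D_x$ and, by Theorem~\ref{IVPCriterion}, $f(x)$ a nondegenerate interval that contains $y$ but is not contained in $D_y$.

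The key step, which I expect to be the main obstacle, is to establish $G(f)\cap(D_x\times D_y)\neq\emptyset$ in this remaining case. The tool is weak continuity: by Theorem~\ref{IVPCriterion} (together with Theorem~\ref{Intersection} when $x$ is an endpoint of $[0,1]$), $f$ is weakly continuous, and since $x\in\overline{D_x}$ the set $D_x$ accumulates at $x$ from some side, say the right. Weak continuity from the right at $x$, applied to $y\in f(x)$, produces graph points $(x_n,y_n)\to(x,y)$ with $x_n>x$ and, eventually, $y_n\in D_y$. The difficulty is that the $x_n$ need not lie in $D_x$, so I would argue by contradiction: assuming $f(\xi)\cap D_y=\emptyset$ for all $\xi\in D_x$, the connectedness of each $f(\xi)$ (Theorem~\ref{IVPCriterion}) together with openness of $D_y$ forces $f$ to be trapped on one fixed complementary side of a fixed neighborhood $(y-\epsilon,y+\epsilon)\subseteq D_y$ of $y$ on all of $D_x$ near $x$; the intermediate value property — via the connectedness of $G(f)\cap V_{[a,b]}$ and its equality with $\overline{G(f)\cap V_{(a,b)}}$ in Theorem~\ref{Intersection} — should then propagate this confinement to a full one‑sided neighborhood of $x$, contradicting $y\in f(x)$ together with weak continuity from the right at $x$. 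Pinning down this incompatibility precisely — in particular controlling how the open set $D_x$ lies relative to $x$ and where $f$ can take values near $y$ — is the heart of the matter and the place where the intermediate value property is used essentially.

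Granting the nonemptiness, the proof closes exactly as set up above: choose any $(\xi_0,v_0)\in G(f)\cap(D_x\times D_y)$, use density of nonfissile points in $G(f)$ to replace it by a nonfissile $(\alpha,a)\in G(f)\cap(D_x\times D_y)$ (so $f(\alpha)=\{a\}\subseteq D_y$ with $\alpha\in D_x$), apply upper semicontinuity of $f$ at $\alpha$ to get $V$, and take $D=V\cap D_x$.
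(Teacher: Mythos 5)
Your reduction and your closing step are exactly the paper's: the paper likewise observes that a nonfissile point $x_2\in D_x$ with $f(x_2)=\{y_2\}\subseteq D_y$ yields $D=\{t:f(t)\subseteq D_y\}\cap D_x$, and it reduces everything to producing a point $x_1\in D_x$ with $f(x_1)\cap D_y\neq\emptyset$. The genuine gap in your proposal is that this nonemptiness --- which you correctly isolate as ``the heart of the matter'' --- is never actually established in the remaining case ($x\notin D_x$ and $f(x)$ a nondegenerate interval not contained in $D_y$), and the contradiction you sketch does not close. The intermediate value property produces a witness $\xi$ \emph{somewhere} strictly between two chosen domain points; when $D_x$ is a union of intervals merely accumulating at $x$, every such witness can land in the gaps between the components of $D_x$, so the ``confinement'' is never propagated to a full one-sided neighborhood of $x$. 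Indeed the confinement is consistent with all the hypotheses: let $f(0)=[1/2,1]$ and $f(t)=\{3/4+\tfrac{1}{4}\sin(1/t)\}$ for $t>0$; this $f$ is upper semicontinuous, almost nonfissile, and has the intermediate value property, and with $x=0$, $y=1/2$, $D_y=(1/4,3/4)$, and $D_x=\{t>0:\sin(1/t)>0\}$ one has $x\in\overline{D_x}$ while $f(t)\subseteq(3/4,1]$ for every $t\in D_x$, so that $G(f)\cap(D_x\times D_y)=\emptyset$ and no nonempty open $D\subseteq D_x$ can satisfy $f[D]\subseteq D_y$.

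To be fair, the paper's own proof handles this same step with a single sentence (weak continuity is said to supply a point $x_1\in D_x$ whose image meets $D_y$), and it is open to exactly the objection you raise: weak continuity supplies points $x_n\to x$ with values tending to $y$, but nothing places those $x_n$ in $D_x$. The lemma is unproblematic precisely in the situations your sub-case analysis already dispatches --- $x\in D_x$; or $f(x)\subseteq D_y$ (in particular $f(x)=\{y\}$), where upper semicontinuity at $x$ finishes immediately; or $D_x$ containing an interval with $x$ as an endpoint --- and its only later use (the degenerate case of Lemma~\ref{c.i}, with $D_x=(x,x+\frac{1}{n})$ and $f(x)=\{y\}$) is of this form. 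So your instinct about where the difficulty lies is sound, but neither your outline nor the strategy you propose for it can be completed as stated; the hypotheses would need to be strengthened (or the conclusion weakened) before the missing step can be supplied.
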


\begin{proof}
Since $f$ is weakly continuous from both the left and the right by Theorem \ref{IVPCriterion}, there is a point $x_{1}\in D_{x}$ such that $f(x_{1})$ intersects $D_{y}$. Since $f$ is almost nonfissile, there is a nonfissile point $x_{2} \in D_{x}$, i.e. that $f(x_{2})=\{y_{2}\}\subseteq D_{y}$. Put $D=\{x\in[0,1]:f(x)\subset D_{y}\}\cap D_{x}$. Then $D$ is a nonempty open subset of $D_{x}$ that contains $x_{2}$. Furthermore, $f[D]\subset D_{y}$.
\end{proof}

\begin{lemma}\label{Delta}
Suppose $f:[0,1]\rightarrow 2^{[0,1]}$ is a light, almost nonfissile, upper semicontinuous map with the intermediate value property. If $G$ is a $G_{\delta}$ subset of $D$ for some open subset $D$ of $[0,1]$ then $\{x\in [0,1]:f(x)\subseteq G\}$ is a $G_{\delta}$ subset of $[0,1]$. Furthermore, if $G$ is dense in $D$, then $\{x\in[0,1]: f(x)\subseteq G\}$ is dense in $\{x\in [0,1]:f(x)\subseteq D\}$.
\end{lemma}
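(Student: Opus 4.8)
The plan is to prove the two statements together by exploiting the structure of the set $\{x : f(x) \subseteq S\}$ for various sets $S$, using the preimage description of $G_\delta$ sets. Write $G = \bigcap_{n} U_n$ with each $U_n$ open in $D$, and without loss assume $U_1 \supseteq U_2 \supseteq \cdots$ and each $U_n \subseteq D$. The first observation is that $\{x : f(x) \subseteq G\} = \bigcap_n \{x : f(x) \subseteq U_n\}$, since $f(x)$ is a subset of the intersection iff it is a subset of each $U_n$. So it suffices to show that for any open subset $U$ of $[0,1]$ the set $E_U := \{x : f(x) \subseteq U\}$ is itself a $G_\delta$ set; then $\{x : f(x) \subseteq G\}$ is a countable intersection of $G_\delta$ sets, hence $G_\delta$.

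The heart of the matter is therefore: for $U$ open, why is $E_U$ a $G_\delta$ (and not merely, say, Borel of higher class)? For a single-valued continuous $f$, $E_U = f^{-1}(U)$ is open; the obstruction here is that upper semicontinuity only gives that $E_U$ is open when $U$ is \emph{open}, so in fact $E_U$ is already open! Indeed, if $f(x) \subseteq U$ with $U$ open, upper semicontinuity at $x$ furnishes an open $V \ni x$ with $f[V] \subseteq U$, so $V \subseteq E_U$. Thus each $E_{U_n}$ is open, and $\{x : f(x)\subseteq G\} = \bigcap_n E_{U_n}$ is visibly $G_\delta$. This part, I expect, uses \emph{only} upper semicontinuity — lightness, the intermediate value property, and almost-nonfissility are not needed for the first assertion. (I should double-check the lemma's hypotheses: perhaps the authors carry these hypotheses simply because the lemma is applied in that setting, or perhaps they are genuinely needed only for the "furthermore.")

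The density claim is where lightness, almost-nonfissility, and the intermediate value property should enter. Suppose $G$ is dense in $D$; I want $\{x : f(x) \subseteq G\}$ dense in $\{x : f(x) \subseteq D\}$. Fix $x_0$ with $f(x_0) \subseteq D$ and an open neighborhood $W$ of $x_0$; I must produce $x \in W$ with $f(x) \subseteq G$. Shrinking $W$, I may assume (by the openness just established, applied to the open set $D$) that $f[W] \subseteq D$. Now I want to iterate Lemma~\ref{OpenSet}: starting from the open set $W$, for each $n$ I want to pass to a smaller open set on which $f$ lands inside $U_n$. Here is the delicate point — Lemma~\ref{OpenSet} lets me, given a point $x$ and target $y \in f(x) \cap$ (something), find a small open $D'$ with $f[D'] \subseteq D_y$, but I need the \emph{target} open set to be arranged so that $U_n$ restricted near the relevant values is dense; this is exactly where density of $G$ in $D$ plus almost-nonfissility (to find genuinely single-valued points whose value lies in $G$) and lightness (to control that the set of $x$ whose value avoids a given point is topologically small, so the shrinking process does not collapse) combine. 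I anticipate the main obstacle is the bookkeeping in this nested construction: I expect to build a decreasing sequence of nonempty open sets $W \supseteq W_1 \supseteq \overline{W_2} \supseteq W_2 \supseteq \cdots$ (taking closures so as to invoke the $x \in \overline{D_x}$ hypothesis of Lemma~\ref{OpenSet} at the next stage) with $f[W_n] \subseteq U_n$, then take $x \in \bigcap_n \overline{W_n} \neq \emptyset$ by compactness; upper semicontinuity and $f[W_n]\subseteq U_n$ force $f(x) \subseteq \overline{U_n}$ for each $n$, and a final argument — again using lightness and almost-nonfissility to replace $\overline{U_n}$ by $U_{n-1}$, say, by a diagonal adjustment of the $U_n$'s — yields $f(x) \subseteq \bigcap_n U_n = G$. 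Getting the interplay of closures and the slight index shift exactly right, so that $f(x)$ genuinely lands in $G$ rather than merely its closure, is the part that will require care.
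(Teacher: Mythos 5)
Your first assertion is handled exactly as in the paper: upper semicontinuity alone makes each $\{x: f(x)\subseteq U_n\}$ open, so $\{x: f(x)\subseteq G\}=\bigcap_n\{x:f(x)\subseteq U_n\}$ is $G_\delta$; you are right that lightness, almost-nonfissility, and the intermediate value property enter only in the ``furthermore.''

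The density claim, however, is where your proposal has a genuine gap, in two respects. First, the step you yourself flag as delicate rests on a false claim: from $f[W_n]\subseteq U_n$ and $x\in\overline{W_n}$, upper semicontinuity does \emph{not} force $f(x)\subseteq\overline{U_n}$. Upper semicontinuity is not lower semicontinuity: if $f(t)=\{0\}$ for $t<x$ and $f(x)=[0,1]$, then $f$ maps $(x-\epsilon,x)$ into any neighborhood $U$ of $0$, yet $f(x)\not\subseteq\overline{U}$. So the ``diagonal adjustment'' meant to upgrade $f(x)\subseteq\overline{U_n}$ to $f(x)\in G$ has nothing sound to start from. (This difficulty is also self-inflicted: nesting so that $\overline{W_{n+1}}\subseteq W_n$ puts the limit point in every $W_n$ itself, giving $f(x)\subseteq U_n$ outright with no closures to remove.) Second, and more importantly, the engine of the whole argument --- producing, inside an arbitrary open interval $V$ with $f[V]\subseteq D$, a point $u$ with $f(u)\subseteq G_n$ --- is only gestured at. The paper's proof does this directly: lightness together with the intermediate value property make $f[V]$ a \emph{nondegenerate interval}, so $\mathrm{int}\,f[V]$ meets the dense set $G_n$; choose $w\in \mathrm{int}\,f[V]\cap G_n$ and $u\in V$ with $w\in f(u)$, and use almost-nonfissility to arrange $f(u)=\{w\}\subseteq G_n$. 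This shows each $\{x:f(x)\subseteq G_n\}$ is dense and open in the open set $\{x:f(x)\subseteq D\}$, and the Baire category theorem finishes the proof --- no nested construction and no appeal to Lemma~\ref{OpenSet} is needed. Note also that your stated role for lightness (``the set of $x$ whose value avoids a given point is topologically small'') is not the relevant one; what is needed is that $f[V]$ is not a single point.
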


\begin{proof}
There are open sets $G_{1}$, $G_{2}$, \dots such that $\bigcap G_{n}=G$. Since $f$ is upper semicontinuous, $\{x\in [0,1]:f(x)\subset G_{n}\}$ is open in $[0,1]$. Note that $\bigcap\{x\in [0,1]:f(x)\subset G_{n}\} = \{x\in[0,1]:f(x)\subset\bigcap G_{n}\}=\{x\in[0,1]:f(x)\subset G\}$. It follows that $\{x\in[0,1]:f(x)\subset G\}$ is a $G_{\delta}$ set.

Suppose further that $G$ is dense in some open set $D$. Replacing $G_{n}$ with $G_{n}\cap D$ for each $n\in\mathbb{N}$ if necessary, the open sets $G_{n}$ may be taken to be open subsets of $D$ for which $\bigcap G_{n}=G$. Note that $G_{n}$ is dense in $D$ for each $n\in\mathbb{N}$. Suppose $U$ is an open interval in $\{x\in[0,1]: f(x)\subset D\}$. Since $f$ is light and has the intermediate value property, $f[U]$ is a nondegenerate interval in $D$. Then $\text{int } f[U]$ contains a point of $G_{n}$. It follows that there is a point $u$ of $U$ and a point $w$ of $\text{int } f[U]\cap G_{n}$ such that $w\in f(u)$. Since $f$ is almost nonfissile, $u$ and $w$ may be chosen so that $f(u)=\{w\}$. It follows that $G_{n}$ contains $f(u)$ and $U$ contains a point of $\{x\in[0,1]:f(x)\subset G_{n}\}$. Hence $\{x\in[0,1]: f(x)\subset G_{n}\}$ is a dense open subset of $\{x\in[0,1]:f(x)\subset D\}$. As this is true for each $n\in\mathbb{N}$, $\{x\in[0,1]:f(x)\subset G\}$ is a dense $G_{\delta}$ subset of $\{x\in[0,1]:f(x)\subset D\}$.
\end{proof}

\begin{lemma} \label{c.i}
Suppose $f:[0,1]\rightarrow 2^{[0,1]}$ is a surjective, light, almost nonfissile, upper semicontinuous map with the intermediate value property. If $y\in f(x)$ and $D_{y}$ is an open set such that $y\in\overline{D_{y}\cap(-\infty,y)}\cap \overline{D_{y}\cap(y,\infty)}$, then there is an open set $D_{x}$ such that $x\in \overline{D_{x}\cap(-\infty,x)}\cap\overline{D_{x}\cap(x,\infty)}$ and such that $f[D_{x}]\subset D_{y}$.
\end{lemma}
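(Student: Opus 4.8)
The plan is to let
\[
D_{x}=\{x'\in[0,1]:f(x')\subseteq D_{y}\},
\]
and to check that this $D_{x}$ does the job. It is open: since $f$ is upper semicontinuous and $D_{y}$ is open, each $x'$ with $f(x')\subseteq D_{y}$ has a neighborhood $V$ with $f[V]\subseteq D_{y}$. Also $f[D_{x}]\subseteq D_{y}$ by construction. So the whole content of the lemma is to verify that $x\in\overline{D_{x}\cap(-\infty,x)}\cap\overline{D_{x}\cap(x,\infty)}$; note that this already forces $x$ to be an interior point of $[0,1]$, which we assume throughout. By the left--right symmetry of the hypotheses it is enough to show $x\in\overline{D_{x}\cap(-\infty,x)}$, that is, to produce for every $\delta>0$ a point of $D_{x}$ in $(x-\delta,x)$.

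Fix $\delta>0$ and pick $\delta'$ with $0<\delta'<\min\{\delta,x\}$, so that $[x-\delta',x]\subseteq[0,1]$. By Theorem~\ref{Intersection}, $G(f)\cap V_{[x-\delta',x]}$ is connected and equals $\overline{G(f)\cap V_{(x-\delta',x)}}$. Projecting onto the second coordinate, $f[[x-\delta',x]]$ is a compact, connected subset of $[0,1]$, hence a closed interval $[c,d]$, and $f[(x-\delta',x)]$ is dense in it. Because $[x-\delta',x]$ has nonempty interior and $f$ is light, $[c,d]$ cannot be a single point, so $c<d$; and $y\in f[[x-\delta',x]]=[c,d]$ since $y\in f(x)$.

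This is the point at which the hypothesis on $y$ enters. Since $[c,d]$ is a nondegenerate interval containing $y$ and $y\in\overline{D_{y}\cap(-\infty,y)}\cap\overline{D_{y}\cap(y,\infty)}$, checking the cases $c<y<d$, $y=c$, and $y=d$ shows $D_{y}\cap(c,d)\neq\emptyset$. As $f[(x-\delta',x)]$ is dense in $[c,d]$ and $D_{y}\cap(c,d)$ is a nonempty open subset of $[c,d]$, there are $x_{0}\in(x-\delta',x)$ and $w_{0}\in f(x_{0})\cap D_{y}$. So $(x_{0},w_{0})\in G(f)$ with $x_{0}$ interior to $(x-\delta',x)$ and $w_{0}$ interior to $D_{y}$; since $f$ is almost nonfissile, the nonfissile points of $G(f)$ are dense in $G(f)$, so there is a nonfissile point $(x_{0}',w_{0}')\in G(f)$ close enough to $(x_{0},w_{0})$ that $x_{0}'\in(x-\delta',x)$ and $w_{0}'\in D_{y}$. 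Then $f(x_{0}')=\{w_{0}'\}\subseteq D_{y}$, so $x_{0}'\in D_{x}\cap(x-\delta,x)$. Since $\delta>0$ was arbitrary, $x\in\overline{D_{x}\cap(-\infty,x)}$, and symmetrically $x\in\overline{D_{x}\cap(x,\infty)}$.

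I expect the main obstacle to be exactly the passage from ``$G(f)$ is connected on vertical strips immediately to the left of $x$ and contains $(x,y)$'' to ``$G(f)$ meets $D_{y}$ strictly to the left of $x$.'' This is where the intermediate value property (through the connectedness and density assertions of Theorem~\ref{Intersection}), lightness (forcing the strip image $[c,d]$ to be nondegenerate), and the two-sided accumulation of $D_{y}$ at $y$ (which lets us slip into $D_{y}$ even when $y\notin D_{y}$) must be used together; no one of them suffices alone. Once a point of $G(f)$ over $(x-\delta',x)$ with second coordinate in $D_{y}$ has been found, almost nonfissility disposes of the rest routinely. (Surjectivity is not in fact needed for this lemma and is presumably assumed only for uniformity with the neighboring results.)
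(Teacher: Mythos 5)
Your proof is correct, and it takes a genuinely different route from the paper's. The paper splits into two cases according to whether $f(x)$ is degenerate: in the nondegenerate case it invokes Lemma~\ref{Intervals} to produce sequences of intervals $L_n, R_n$ converging to $x$ with $f[L_n], f[R_n] \to f(x)$, finds a subinterval $(y_1,y_2)$ of $\text{int}\,f(x) \cap D_y$, and assembles $D_x$ as a union of small open sets $U_n \subset L_n$, $V_n \subset R_n$; in the degenerate case it uses lightness and Lemma~\ref{OpenSet} on the intervals $(x, x+\tfrac1n)$ and $(x-\tfrac1n, x)$. You instead take $D_x$ to be the maximal candidate $\{x' : f(x') \subseteq D_y\}$ (open by upper semicontinuity) and give a single uniform argument: Theorem~\ref{Intersection} applied to the strip $V_{[x-\delta',x]}$ makes $f[[x-\delta',x]]$ a closed interval $[c,d]$ with $f[(x-\delta',x)]$ dense in it, lightness forces $c<d$, the two-sided accumulation of $D_y$ at $y$ puts a point of $D_y$ in $(c,d)$ in each of the cases $y=c$, $y=d$, $c<y<d$, density then yields a graph point over $(x-\delta',x)$ with second coordinate in $D_y$, and almost nonfissility upgrades it to a nonfissile point lying in $D_x$. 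Each step checks out (in particular, $f[[a,b]] = \overline{f[(a,b)]}$ does follow from $G(f)\cap V_{[a,b]} = \overline{G(f)\cap V_{(a,b)}}$ by compactness). Your approach is shorter, avoids the case split and the auxiliary Lemmas~\ref{Intervals} and \ref{OpenSet}, and since your $D_x$ contains any set satisfying the conclusion, it is the canonical choice; the paper's construction is more explicit about where $D_x$ sits relative to $x$ but proves nothing more. Your side remarks are also accurate: the conclusion tacitly forces $x$ (and, via the hypothesis on $D_y$, also $y$) to be interior points of $[0,1]$ --- an assumption the paper's proof shares implicitly --- and surjectivity plays no essential role here.
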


\begin{proof}
First suppose $f(x)$ is nondegenerate. Then, by the Lemma \ref{Intervals}, there are sequences $L_{1}$, $L_{2}$,\dots and $R_{1}$, $R_{2}$,\dots of nondegenerate closed subintervals of $[0,1]$ such that
\begin{enumerate}
\item $z<x$ for all $z\in \bigcup L_{n}$, and $z>x$ for all $z\in\bigcup R_{n}$,
\item $\lim L_{n}=\{x\}$, and $\lim R_{n} = \{x\}$, and
\item $\lim f[L_{n}]=f(x)$ and $\lim f[R_{n}]=f(x)$.
\end{enumerate}
Since $f(x)$ is a nondegenerate interval containing $y$, at least one of $f(x)\cap(-\infty,y)$ and $f(x)\cap(y,\infty)$ is a nondegenerate interval with one endpoint equal to $y$, say $f(x)\cap (y,\infty)$. Since $y\in\overline{D_{y}\cap(y,\infty)}$, every open interval whose left endpoint is $y$ contains a point of $D_{y}\cap(y,\infty)$. It follows that $\text{int} f(x)\cap (y,\infty)$ contains a point of $D_{y}\cap(y,\infty)$. Hence $\text{int} f(x)\cap D_{y}\cap (y,\infty)$ contains an open interval $(y_{1},y_{2})$; furthermore, $y_{1}$ and $y_{2}$ may be chosen so that neither of them is an endpoint of $f(x)$. Since $f[L_{n}]$ and $f[R_{n}]$ are connected for each $n\in\mathbb{N}$ by dint of the intermediate value property and since $\lim f[L_{n}]=\lim f[R_{n}]=f(x)$, it follows that there is $N\in\mathbb{N}$ such that $f[L_{n}]$ and $f[R_{n}]$ both contain $(y_{1},y_{2})$ for each $n\geq N$. Hence, for each $n\geq N$, some point of $L_{n}$ has an image that intersects $(y_{1},y_{2})$. As $f$ is weakly continuous from both the left and the right by Theorem \ref{IVPCriterion}, there are, for each $n\geq N$, points $l_{n}\in\text{int } L_{n}$ and $\tilde{l}_{n}\in(y_{1},y_{2})$ such that $\tilde{l}_{n}\in f(l_{n})$. Furthermore, since $f$ is almost nonfissile, $l_{n}$ and $\tilde{l}_{n}$ may be chosen so that $l_{n}$ is a nonfissile point of $f$. Then $(y_{1},y_{2})$ is an open set containing $f(l_{n})$. Hence $\{x\in[0,1]:f(x)\subset(y_{1},y_{2})\}$ is an open set containing $l_{n}$. For each $n\geq N$, put $U_{n}=\text{int } L_{n}\cap \{x\in[0,1]:f(x)\subset (y_{1},y_{2})\}$. Then $U_{n}\subset L_{n}$, and $f[U_{n}]\subset(y_{1},y_{2})\subset D_{y}$. Similarly, for $n\geq N$, there are open sets $V_{n}\subset R_{n}$ such that $f[V_{n}]\subset D_{y}$. Finally, put $D_{x}=(\bigcup_{n\geq N}U_{n})\cup(\bigcup_{n\geq N}V_{n})$. Note that $f[D_{x}]\subset D_{y}$. Thus it remains only to show that $x\in \overline{D_{x}\cap(-\infty,x)}\cap\overline{D_{x}\cap(x,\infty)}$.

To that end note that, by $(1)$ and the fact that $U_{n}\subset L_{n}$ and $V_{n}\subset R_{n}$ for each $n\geq N$, we have $D_{x}\cap(-\infty,x)=\bigcup_{n\geq N}U_{n}$ and $D_{x}\cap(x,\infty)=\bigcup_{n\geq N}V_{n}$. It follows from $(2)$ that $x\in\overline{\bigcup_{n\geq N}U_{n}}$ and $x\in\overline{\bigcup_{n\geq N}V_{n}}$. Consequently, $x\in \overline{D_{x}\cap(-\infty,x)}\cap\overline{D_{x}\cap(x,\infty)}$.

Now suppose $f(x)$ is degenerate, that is, suppose $f(x)=\{y\}$. Suppose $n\in\mathbb{N}$ is given, and consider the interval $\left(x,x+\frac{1}{n}\right)$. Since $f$ is light and upper semicontinuous, $f\left(x,x+\frac{1}{n}\right)$ is a nondegenerate interval. Since the graph of $f$ is closed, $y\in\overline{f\left(x,x+\frac{1}{n}\right)}$. Since $f\left(x,x+\frac{1}{n}\right)$ is an interval, this is equivalent to $y\in\overline{\text{int } f\left(x,x+\frac{1}{n}\right)}$. It follows that $\text{int } f\left(x,x+\frac{1}{n}\right)\cap D_{y}$ is nonempty. By Lemma \ref{OpenSet}, there is an open subset $V_{n}$ of $\left(x,x+\frac{1}{n}\right)$ such that $f[V_{n}]\subset D_{y}$. Similarly, there is an open subset $U_{n}$ of $\left(x-\frac{1}{n},x\right)$ such that $f[U_{n}]\subset D_{y}$. Thus $U_{n}$ and $V_{n}$ are defined for $n\in\mathbb{N}$. Put $D_{x}=(\bigcup_{n\in\mathbb{N}}U_{n})\cup (\bigcup_{n\in\mathbb{N}}V_{n})$. Then $f[D_{x}]\subset D_{y}$ and $x\in\overline{(\bigcup_{n\in\mathbb{N}}U_{n})}\cap\overline{(\bigcup_{n\in\mathbb{N}}V_{n})}=\overline{D_{x}\cap (-\infty,x)}\cap\overline{D_{x}\cap(x,\infty)}$.
\end{proof}

\begin{lemma}\label{Comeager}
Suppose $\{[0,1],f_{n}\}$ is an inverse sequence where, for each $n\in\mathbb{N}$, $f_{n}$ is a surjective almost nonfissile, light, upper semicontinuous map with the intermediate value property. For each $N \in {\mathbb N}$, if $x \in \invlim\{[0,1],f_{n}\}$ and $U_{0}$, $U_{1}$, \dots, $U_{N}$ are open sets containing $x_{0}$, $x_{1}$, \dots, $x_{N}$ respectively, then there are open subsets $D_{0}$, $D_{1}$, \dots, $D_{N}$ of $U_{0}$, $U_{1}$, \dots, $U_{N}$ respectively such that
\begin{enumerate}
\item $x_{n}\in\overline{D_{n}\cap(-\infty,x_{n})}\cap\overline{D_{n}\cap(x_{n},\infty)}$ for $n=0,1,\dots, N$,
\item $f_{n}[D_{n}]\subset D_{n-1}$ for $n=1,2,\dots, N$, and
\item $z_{N}$, $f_{N-1}^{N}(z_{N})$, \dots, $f_{1}^{N}(z_{N})$ are nonfissile points of $f_{N}$, $f_{N-1}$, \dots, $f_{1}$ respectively for all $z_{N}$ in some comeager subset of $D_{N}$.
\end{enumerate}
\end{lemma}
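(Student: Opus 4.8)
The plan is to build the sets $D_0,\dots,D_N$ in two stages. First I would arrange conditions $(1)$ and $(2)$ by a finite induction on $n$ that feeds Lemma~\ref{c.i}, and then, holding those sets fixed, produce the comeager subset of $D_N$ required by $(3)$ by a second induction using Lemma~\ref{Delta} together with the Baire category theorem.

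For the first stage, recall that $x\in\invlim\{[0,1],f_n\}$ forces $x_{n-1}\in f_n(x_n)$ for every $n\geq 1$. Let $D_0=U_0\cap(x_0-\varepsilon,x_0+\varepsilon)$ for $\varepsilon>0$ small enough that $D_0\subseteq U_0$; then $x_0\in\overline{D_0\cap(-\infty,x_0)}\cap\overline{D_0\cap(x_0,\infty)}$ (this is where one uses that $x_0$ is an interior point of $[0,1]$, and likewise the other $x_n$ must be interior for $(1)$ to be attainable at all). Inductively, suppose $D_{n-1}\subseteq U_{n-1}$ is open with $x_{n-1}\in\overline{D_{n-1}\cap(-\infty,x_{n-1})}\cap\overline{D_{n-1}\cap(x_{n-1},\infty)}$. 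Apply Lemma~\ref{c.i} to $f_n$ with $y=x_{n-1}$, $x=x_n$, and $D_y=D_{n-1}$ to obtain an open set $D_n'$ with $x_n\in\overline{D_n'\cap(-\infty,x_n)}\cap\overline{D_n'\cap(x_n,\infty)}$ and $f_n[D_n']\subseteq D_{n-1}$, and put $D_n=D_n'\cap U_n$. Intersecting with the open neighborhood $U_n$ of $x_n$ preserves the two-sided density of $D_n$ at $x_n$ (intersecting a set whose closure contains $x_n$ with an open neighborhood of $x_n$ keeps $x_n$ in the closure) and preserves $f_n[D_n]\subseteq f_n[D_n']\subseteq D_{n-1}$. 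So $D_0,\dots,D_N$ satisfy $(1)$ and $(2)$.

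For the second stage, fix these $D_n$. For each $n$ let $N_n\subseteq[0,1]$ denote the set of nonfissile points of $f_n$; since $f_n$ is light, $G(f_n)$ has empty interior (a nondegenerate box in $G(f_n)$ would place a nondegenerate interval inside some $\{x:y\in f_n(x)\}$), so Lemma~\ref{ANFDomain} gives that $N_n$ is a dense $G_\delta$ subset of $[0,1]$. Define $E_1=N_1\cap D_1$ and recursively $E_k=\{x\in D_k: x\in N_k\text{ and }f_k(x)\subseteq E_{k-1}\}$ for $2\leq k\leq N$. I claim each $E_k$ is a dense $G_\delta$ subset of $D_k$. For $k=1$ this is clear since $N_1$ is a dense $G_\delta$ of $[0,1]$ and $D_1$ is open. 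Assuming $E_{k-1}$ is a dense $G_\delta$ of the open set $D_{k-1}$, Lemma~\ref{Delta} applied to $f_k$ (with $G=E_{k-1}$, $D=D_{k-1}$) shows that $\{x:f_k(x)\subseteq E_{k-1}\}$ is a $G_\delta$ subset of $[0,1]$ which is dense in $\{x:f_k(x)\subseteq D_{k-1}\}$; since $f_k[D_k]\subseteq D_{k-1}$, the open set $D_k$ is a relatively open subset of $\{x:f_k(x)\subseteq D_{k-1}\}$, so $\{x:f_k(x)\subseteq E_{k-1}\}\cap D_k$ is dense in $D_k$. Intersecting further with the dense $G_\delta$ set $N_k\cap D_k$ and invoking the Baire category theorem in the open (hence Baire) space $D_k$ yields that $E_k$ is a dense $G_\delta$ of $D_k$, completing the induction. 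Now take $C=E_N$, a comeager subset of $D_N$. For $z_N\in C$ we have $z_N\in N_N$, so $f_N(z_N)$ is a single point $z_{N-1}\in E_{N-1}$; iterating, for $k=N,N-1,\dots,2$ the unique point $z_{k-1}$ of $f_k(z_k)$ lies in $E_{k-1}$ and in particular in $N_{k-1}$, and finally $z_1\in E_1\subseteq N_1$. Since $z_{k-1}=f_{k-1}^{N}(z_N)$ and each $z_k\in N_k$, condition $(3)$ holds with this $C$.

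The bookkeeping about open and $G_\delta$ sets is routine; the one point that needs care is the density transfer in the second stage, where Lemma~\ref{Delta} only delivers density in $\{x:f_k(x)\subseteq D_{k-1}\}$ and one must observe that $D_k$ sits inside that set as a relatively open subset in order to pull the density back to $D_k$ before combining with $N_k$ via Baire category. (The only hidden hypothesis is that the coordinates $x_n$ be interior points of $[0,1]$, which is already needed to begin the first stage and is implicit in $(1)$.)
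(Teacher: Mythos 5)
Your proof is correct and follows essentially the same route as the paper's: Lemma~\ref{c.i} supplies the nested open sets for conditions (1) and (2), and Lemma~\ref{Delta} together with Lemma~\ref{ANFDomain} and the Baire category theorem produces the comeager set for (3). The only difference is organizational (you separate the construction into two passes where the paper maintains all three conditions in a single induction on $N$), and your explicit remark that the $x_n$ must be interior points of $[0,1]$ for condition (1) to be attainable is a fair observation that the paper leaves implicit.
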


\begin{proof}
The proof is by induction. First consider $N=1$. Suppose $x\in\invlim\{[0,1],f_{n}\}$, and suppose $U_{0}$ and $U_{1}$ are open sets containing $x_{0}$ and $x_{1}$ respectively. Put $D_{0}=U_{0}$. Note that $D_{0}$ satisfies the requirement in (1). By the Lemma~\ref{c.i}, there is an open set $\tilde{D}_{1}$ such that $x_{1}\in\overline{\tilde{D}_{1}\cap(-\infty,x_{1})}\cap\overline{\tilde{D}_{1}\cap(x_{1},\infty)}$ and $f[\tilde{D}_{1}]\subset D_{0}$. Put $D_{1}=U_{1}\cap \tilde{D}_{1}$. Then $D_{0}$ and $D_{1}$ satisfy (1) and (2). The set of nonfissile points of $f_{1}$ is a $G_{\delta}$ subset of $[0,1]$ by Lemma~\ref{ANFDomain} and dense in $[0,1]$ since $f_{1}$ is almost nonfissile.  Since $D_{1}$ is open, the set of nonfissile points of $f_{1}$ that lie in $D_{1}$ is a comeager subset of $D_{1}$. Hence (3) holds, and the result is true for $N=1$.

Suppose that the result is true for $N=k$ for some $k\geq 1$, and consider $n=k+1$. Suppose $x\in\invlim\{[0,1],f_{n}\}$, and suppose $U_{0}$, $U_{1}$, \dots, $U_{k+1}$ are open sets containing $x_{1}$, $x_{2}$, \dots, $x_{k+1}$ respectively. Since the result holds for $N=k$, there are open subsets $D_{0}$, $D_{1}$, \dots, $D_{k}$ of $U_{0}$, $U_{1}$, \dots, $U_{k}$ that satisfy (1), (2), and (3). By Lemma~\ref{c.i}, there is an open set $D_{k+1}$ such that $x_{k+1}\in\overline{D_{k+1}\cap(-\infty,x_{k+1})}\cap\overline{D_{k+1}\cap(x_{k+1},\infty)}$ and $f_{k+1}[D_{k+1}]\subset D_{k}$. Replacing $D_{k+1}$ with $D_{k+1}\cap U_{k+1}$ if necessary, we may assume $D_{k+1}\subset U_{k+1}$. Note that $D_{k+1}$ satisfies (1) and (2). Thus it remains to show that $D_{k+1}$ satisfies (3).

For each $n \in {\mathbb N}$, denote the set of fissile points of $f_n$ by ${\rm Fi}(f_n)$.  By Lemma \ref{ANFDomain}, ${\rm Fi}(f_{n})$ is an $F_{\sigma}$ set for each $n=1,2,\dots,k+1$. Since $f_{n}^{k+1}$ is upper semicontinuous for each $n$, $(f_{n}^{k+1})^{-1}({\rm Fi}(f_{n}))$ is an $F_{\sigma}$ set for each $n=1,2,\dots, k+1$. Hence $\bigcup_{n=1}^{k+1}(f_{n}^{k+1})^{-1}({\rm Fi}(f_{n}))$ is an $F_{\sigma}$ set. Equivalently, $\{z\in D_{k+1}: z, f_{k}^{k+1}(z),\dots, f_{1}^{k+1}(z) \text{ are nonfissile}$ $\text{points of $f_{k}$, $f_{k-1}$, \dots, $f_{1}$ respectively}\}$ is a $G_{\delta}$ set. Denote it by $A$, and note that $A\cap D_{k+1}$ is a $G_{\delta}$ subset of $D_{k+1}$. To see that $A\cap D_{k+1}$ is dense in $D_{k+1}$, suppose $D$ is an open interval in $D_{k+1}$. Since $f_{k+1}$ is light and has the intermediate value property, $f_{k+1}[D]$ is a nondegenerate interval in $D_{k}$. Since $D_{k}$ satisfies (3),
$\{z\in D_{k}: z,f_{k-1}^{k}(z),\dots,f_{1}^{k}(z) \text{ are non-}$ $\text{fissile points of $f_{k}$, $f_{k-1}$, \dots, $f_{1}$ respectively}\}$ is a dense $G_{\delta}$ set in $D_{k}$. Denote this set by $G$. Then, by Lemma \ref{Delta}, $\{x\in[0,1]: f_{k+1}(x)\subset G\}$ is a dense $G_{\delta}$ set in $\{x\in[0,1]:f_{k+1}(x)\subset D_{k}\}$. Since $D_{k+1}\subset \{x\in[0,1]:f_{k+1}(x)\subset D_{k}\}$, it follows that $D_{k+1}\cap\{x\in[0,1]:f_{k+1}(x)\subset G\}$ is a dense $G_{\delta}$ subset of $D_{k+1}$.  The set of nonfissile points of $f_{k+1}$ in $D_{k+1}$ is also a dense $G_{\delta}$ subset of $D_{k+1}$ by Lemma~\ref{ANFDomain} and the fact that $f_{k+1}$ is almost nonfissile.  Put $A=([0,1]-{\rm Fi}(f_{k+1}))\cap D_{k+1}\cap \{x\in[0,1]:f_{k+1}(x)\subset G\}$. Then $A$ is a dense $G_{\delta}$ subset of $D_{k+1}$, and, for each $z\in A$, $z$, $f_{k}^{k+1}(z)$, $f_{k-1}^{k+1}(z)$, \dots, $f_{1}^{k+1}(z)$ are nonfissile points of $f_{k+1}$, $f_{k}$, \dots, $f_{1}$ respectively. Hence $D_{k+1}$ satisfies (3), and the inductive step is complete.
\end{proof}


\begin{theorem} \label{FPP}
Suppose $\{[0,1],f_{n}\}$ is an inverse sequence where, for each $n \in \mathbb{N}$, $f_{n}:[0,1] \rightarrow 2^{[0,1]}$ is a surjective, light, almost nonfissile, upper semicontinuous map with the intermediate value property. Then $\invlim\{[0,1],f_{n}\}$ has the full-projection property.
\end{theorem}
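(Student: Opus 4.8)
The plan is to invoke Theorem~\ref{c.d}, which reduces the full-projection property to showing that the set of fissile points of $X = \invlim\{[0,1],f_{n}\}$ is a meager $F_{\sigma}$ set. That it is $F_{\sigma}$ follows from the Remark after the definition of almost nonfissile (or directly: a point of $X$ is fissile iff its $n$th coordinate is a fissile point of $f_n$ for some $n$, and each such set is $F_{\sigma}$ by Lemma~\ref{ANFDomain} pulled back by the continuous projection $\pi_n$; a countable union of $F_\sigma$ sets is $F_\sigma$). So the real content is \emph{meagerness}: the set of nonfissile points of $X$ must be dense, equivalently, every basic open subset of $X$ must contain a point all of whose coordinates are nonfissile.

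First I would recall that basic open sets of $X$ have the form $X \cap (U_0 \times U_1 \times \cdots \times U_N \times [0,1] \times [0,1] \times \cdots)$ for some $N$ and open sets $U_0,\dots,U_N$ in $[0,1]$, and such a set is nonempty precisely when it contains some point $x \in X$. Given such an $x$ and such sets $U_0,\dots,U_N$, apply Lemma~\ref{Comeager} to obtain open subsets $D_0,\dots,D_N$ of $U_0,\dots,U_N$ satisfying conditions (1), (2), and (3) of that lemma. Condition (3) supplies a comeager, hence nonempty, subset of $D_N$ consisting of points $z_N$ for which $z_N, f_{N-1}^{N}(z_N), \dots, f_1^{N}(z_N)$ are all nonfissile points of $f_N, f_{N-1}, \dots, f_1$ respectively. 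Pick such a $z_N$.

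The next step is to extend $(z_0, z_1, \dots, z_N) := (f_1^{N}(z_N), \dots, f_{N-1}^{N}(z_N), z_N)$ — which already lies in the prescribed open box by condition (2), since $f_n[D_n] \subset D_{n-1}$ forces the partial orbit into $D_0 \times \cdots \times D_N \subset U_0 \times \cdots \times U_N$ — to a full point of $X$ whose coordinates beyond $N$ are also nonfissile. Here surjectivity of each $f_n$ is used to continue backward (i.e., choosing $z_{N+1} \in f_{N+1}^{-1}(z_N)$, etc.), but more care is needed to ensure \emph{nonfissility} of the tail: for each new coordinate we want to land at a point where the corresponding bonding function is single-valued. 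Since each $f_n$ is almost nonfissile, its set of nonfissile points is dense in $[0,1]$ (by the Remark / Lemma~\ref{ANFDomain}), and since $f_n$ is light and has the intermediate value property, the preimage $f_n^{-1}(\{z_{n-1}\})$ is a nonempty closed set with no interior whose structure still meets any neighborhood forced by weak continuity — so we can select $z_n$ in that preimage to be a nonfissile point of $f_n$. Iterating this choice for all $n > N$ produces $z = (z_0, z_1, \dots) \in X$ lying in the given basic open set with every coordinate nonfissile, i.e., $z$ is a nonfissile point of $X$. Hence nonfissile points are dense, the fissile set is meager $F_\sigma$, and Theorem~\ref{c.d} yields the full-projection property.

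The main obstacle I anticipate is precisely this backward extension past coordinate $N$: Lemma~\ref{Comeager} is stated only for finitely many coordinates, and while its proof technique (built on Lemmas~\ref{c.i} and \ref{Delta}) clearly iterates, one must verify that at each new stage the preimage we are choosing from genuinely contains a nonfissile point — this is where lightness is essential, since without it a preimage could be an interval on which $f_n$ is constantly multivalued, wrecking the nonfissility requirement. A clean way to handle this is to apply Lemma~\ref{Comeager} afresh at each stage $N' > N$ with nested open sets, using a diagonal/compactness argument: build a decreasing sequence of nonempty closed ``cylinder'' subsets of $X$, the $N'$th forcing the first $N'$ coordinates to be nonfissile, and take a point in their intersection (nonempty by compactness of $X$). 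That point has all coordinates nonfissile and lies in the original basic open set, completing the argument.
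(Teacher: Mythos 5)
Your reduction via Theorem~\ref{c.d} and your use of Lemma~\ref{Comeager} to make the first $N$ coordinates nonfissile are exactly right, but the step where you extend past coordinate $N$ has a genuine gap, in both of the forms you propose. First, choosing $z_{n}$ to be a nonfissile point of $f_{n}$ lying in $f_{n}^{-1}(z_{n-1})$ is not justified: lightness makes that preimage a closed set with empty interior, and the nonfissile points of $f_{n}$ form only a dense $G_{\delta}$ of $[0,1]$ --- a dense $G_{\delta}$ need not meet a prescribed closed nowhere dense set, so the required $z_n$ may simply not exist. Second, your fallback compactness argument does not work as stated: the set of points of $X$ whose first $N'$ coordinates are nonfissile is a $G_{\delta}$ (nonfissility is the condition ${\rm diam}\, f_n(x_n) < 1/k$ for all $k$), not a closed set, so you cannot form ``closed cylinder subsets forcing the first $N'$ coordinates to be nonfissile'' and intersect them by compactness; the limit point of closures could have fissile coordinates.

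The paper's proof avoids the tail entirely by a Baire category argument at the level of $X$. For each $n$ let $\sim{\rm Fi}_n(X)=\{x\in X: |f_n(x_n)|=1\}$; each of these is a $G_{\delta}$ subset of $X$ (pull back Lemma~\ref{ANFDomain} through $\pi_n$), and the set of nonfissile points of $X$ is $\bigcap_n \sim{\rm Fi}_n(X)$. Lemma~\ref{Comeager} shows that each finite intersection $\sim{\rm Fi}_1(X)\cap\dots\cap\sim{\rm Fi}_m(X)$ is dense: in a basic open set $D_1\times\dots\times D_m\times[0,1]\times\cdots$ one only needs the first $m$ coordinates nonfissile, and the remaining coordinates can be filled in arbitrarily using surjectivity. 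Since $X$ is compact metric, hence Baire, the countable intersection of these dense $G_{\delta}$ sets is dense, and Theorem~\ref{c.d} finishes the proof. If you replace your backward-extension step with this appeal to the Baire category theorem, your argument is complete.
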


\begin{proof}
Denote $\invlim\{[0,1],f_{n}\}$ by $X$. By Theorem~\ref{c.d}, it suffices to show that the set of nonfissile points of $X$ is dense in $X$.  For each $n \in {\mathbb N}$, denote $\{x \in X: |f_n(x_n)| = 1\}$ by $\sim {\rm Fi}_n(X)$, and note that the set of nonfissile points of $X$ is $\sim {\rm Fi}_{1}(X) \cap \sim {\rm Fi}_{2}(X) \cap \sim {\rm Fi}_{3}(X) \cap ...$.  Since $\sim {\rm Fi}_{n}(X)$ is a $G_{\delta}$ subset of $X$ for each $n$, it suffices to show that $\sim {\rm Fi}_{1}(X)\cap \sim {\rm Fi}_{2}(X)\cap \dots\cap \sim {\rm Fi}_{n}(X)$ is dense in $X$ for each $n\geq 1$.

To that end, suppose $n$ is given and $D$ is a nonempty basic open set in $X$. Then $D$ has the form $D=D_{1} \times D_{2} \times \dots \times D_{m} \times [0,1] \times [0,1] \times \dots$ where $D_{i}$ is an open subset of $[0,1]$ for $i=1,2,\dots, m$, and where $m\geq n$. We must show that $D$ contains a point of $\sim {\rm Fi}_{1}(X) \cap \sim {\rm Fi}_{2}(X) \cap \dots \cap \sim {\rm Fi}_{n}(X)$, to which end it suffices to show that $D$ contains a point of $\sim {\rm Fi}_{1}(X) \cap \sim {\rm Fi}_{2}(X) \cap \dots \cap \sim {\rm Fi}_{m}(X)$. This is a consequence of Lemma \ref{Comeager}.
\end{proof}

\begin{theorem} \label{FPP2}
Suppose $\{[0,1],f_{n}\}$ is an inverse sequence where, for each $n\in\omega$, $f_{n}:[0,1] \rightarrow 2^{[0,1]}$ is a surjective, light, almost nonfissile, upper semicontinuous map with the intermediate value property. Let $K$ be a subcontinuum of $\invlim\{[0,1],f\}$ such that $\pi_{n}[K]$ is nondegenerate for each $n$. Then $K$ can be written as the inverse limit of its projections and has the full-projection property.
\end{theorem}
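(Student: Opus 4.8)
The plan is to show that $K$ is itself an inverse limit over a modified inverse sequence obtained by restricting each $f_n$ to the relevant projection, and then to apply Theorem~\ref{FPP} to that sequence. Write $K_n = \pi_n[K]$ for each $n$. Since $K$ is a subcontinuum of $\invlim\{[0,1],f_n\}$ and each $\pi_n$ is continuous, each $K_n$ is a subcontinuum of $[0,1]$, hence a closed interval, and by hypothesis nondegenerate. The first step is the standard observation that $K = \invlim\{K_n, g_n\}$, where $g_n = f_n|_{K_n}^{K_{n-1}}$, i.e. $g_n(x) = f_n(x)\cap K_{n-1}$ for $x\in K_n$. One containment is immediate: every point of $K$ has $x_{n-1}\in f_n(x_n)$ and $x_{n-1}\in K_{n-1}$, so $x_{n-1}\in g_n(x_n)$. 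For the reverse containment one uses that $K$ is a subcontinuum of an inverse limit, so $K = \invlim\{\pi_n[K], f_n|_{\pi_n[K]}\}$ in the usual sense; the only subtlety is that one must know $g_n$ actually maps $K_n$ into (nonempty subsets of) $K_{n-1}$, which follows because $\pi_{n-1}[K]\subseteq f_n[\pi_n[K]]$ by the inverse-limit relation together with surjectivity bookkeeping. This part is routine continuum theory and I would not belabor it.

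The substantive step is verifying that each $g_n\colon K_n \to 2^{K_{n-1}}$ inherits all five hypotheses needed to invoke Theorem~\ref{FPP}: upper semicontinuity, the intermediate value property, surjectivity, lightness, and almost nonfissility. Upper semicontinuity and the intermediate value property are immediate from the paragraph in Section~2 noting that $f|_I$, $f|_I^J$, and compositions of such all inherit the (weak) intermediate value property, together with the fact that restrictions of u.s.c.\ set-valued maps to closed subintervals, intersected with closed subintervals of the codomain, remain u.s.c. Lightness of $g_n$ follows from lightness of $f_n$: if $\{x\in K_n : y\in g_n(x)\}$ contained an interval $J$, then $\{x\in[0,1]: y\in f_n(x)\}\supseteq J$ would have interior, contradicting lightness of $f_n$. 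Surjectivity of $g_n$ onto $K_{n-1}$: given $y\in K_{n-1}=\pi_{n-1}[K]$, pick $x\in K$ with $x_{n-1}=y$; then $x_n\in K_n$ and $y=x_{n-1}\in f_n(x_n)\cap K_{n-1}=g_n(x_n)$.

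The main obstacle is almost nonfissility of $g_n$. One must show the nonfissile points of $G(g_n)$ form a dense $G_\delta$ in $G(g_n)$. By Proposition~\ref{c.c} this is equivalent to $g_n$ being irreducible with respect to domain. The natural approach: $G(g_n) = G(f_n)\cap (K_n\times K_{n-1})$, and the nonfissile points of $G(f_n)$ are dense in $G(f_n)$; one needs that restricting to the box $K_n\times K_{n-1}$ does not destroy density. This requires a genuine argument, because a dense $G_\delta$ subset of $G(f_n)$ need not meet $G(f_n)\cap(K_n\times K_{n-1})$ densely. Here is where lightness and the intermediate value property do real work, essentially repeating the mechanism of Lemma~\ref{Delta}: given a basic open box $U\times V$ meeting $G(g_n)$, lightness plus the intermediate value property forces $g_n[U\cap K_n]$ to be a nondegenerate subinterval of $K_{n-1}$, hence to have interior inside which one finds a nonfissile image value, and then almost nonfissility of $f_n$ supplies a nearby point of $K_n$ at which $f_n$ is single-valued with that value lying in $K_{n-1}$. (One also needs the single-valued image to land in $K_{n-1}$, not merely near it; this is arranged by choosing the target value in the \emph{interior} of $g_n[U\cap K_n]$, which is an open subset of $K_{n-1}$.) It should also be checked that $\mathrm{int}\,G(g_n)=\emptyset$, which follows from lightness of $f_n$. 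Once almost nonfissility of each $g_n$ is in hand, Theorem~\ref{FPP} applied to $\{K_n,g_n\}$ yields that $K=\invlim\{K_n,g_n\}$ has the full-projection property, completing the proof.
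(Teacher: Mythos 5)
Your overall strategy --- restrict each $f_n$ to $g_n=f_n|_{K_n}^{K_{n-1}}$, check that the restrictions inherit the five hypotheses, and apply Theorem~\ref{FPP} --- is the same as the paper's, and your attention to whether almost nonfissility survives the restriction is warranted: the paper asserts the inheritance in a single line, whereas you correctly identify it as the point requiring the mechanism of Lemma~\ref{Delta} and the equivalence in Proposition~\ref{c.c}. The genuine gap is in the step you dismiss as routine: the identification $K=\invlim\{K_n,g_n\}$. For set-valued bonding functions a subcontinuum of an inverse limit is \emph{not} in general equal to the inverse limit of its projections --- that failure is exactly the phenomenon the full-projection property is designed to control, and it is why this theorem has content. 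Only the containment $K\subseteq\invlim\{K_n,g_n\}$ is immediate; the reverse containment cannot be obtained ``in the usual sense,'' and your parenthetical about $g_n$ mapping into nonempty subsets of $K_{n-1}$ is not the real subtlety.

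The fix uses ingredients you already have, assembled in a different order. First establish that $K':=\invlim\{K_n,g_n\}$ is a continuum (Theorem~\ref{InvLim}) with the full-projection property (Theorem~\ref{FPP}, once the inheritance of the hypotheses is in hand). Then observe that $K$ is a subcontinuum of $K'$ with $\pi_n[K]=K_n=\pi_n[K']$ for every $n$, so the full-projection property of $K'$ forces $K=K'$. This is how the paper closes the argument. As written, your proposal asserts the identification before, and independently of, the full-projection property, on grounds that are false in the set-valued setting; with the order of steps corrected, the remainder of your argument goes through.
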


\begin{proof}
For each $n\in\omega$, let $K_{n}=\pi_{n}[K]$. Then $f_{n}$ maps $\pi_{n+1}[K]$ onto $\pi_{n}[K]$. Denote by $f'_{n}$ the restriction of $f_{n}$, $f_{n}|_{\pi_{n+1}[K]}^{\pi_{n}[K]}:\pi_{n+1}[K]\rightarrow 2^{\pi_{n}[K]}$. Note $f'_{n}$ inherits the properties of $f_{n}$ given in the hypothesis.

Define $K' = \invlim\{\pi_{n}[K], f'_{n}\}$.  By Theorems~\ref{InvLim} and \ref{FPP}, $K'$ is a subcontinuum of $\invlim\{[0,1], f_{n}\}$ with the full-projection property.

To show $K'=K$, let $x\in K$. Then for all $n$, $\pi_{n}(x)\in \pi_{n}(K)$ and $\pi_{n}(x)\in f(\pi_{n+1}(x))$ for all $n$. So $\pi_{n}(x)\in f'_{n}(\pi_{n+1}(x))$, i.e. $x\in K'$. Therefore $K\subseteq K'$. But $\pi_{n}(K)=\pi_{n}(K')$ for all $n$. Then as $K'$ has the full-projection property, $K'=K$.
\end{proof}

\section{Relationship Between Periodicity and Indecomposability}  \label{d}

We now turn to the connection between periodicity in an upper semicontinuous function $f:[0,1]\rightarrow 2^{[0,1]}$ with the intermediate value property and indecomposability in the corresponding inverse limit.  In particular, we generalize a connection established in the classical setting by Barge and Martin \cite[Theorems 1 \& 7]{BargeMartin1}.

In Section~\ref{d.1}, we examine how a periodic cycle of $f$ with period not a power of two gives rise to an indecomposable subcontinuum of the inverse limit. The primary result is Theorem \ref{Interior}.  The proof leans heavily on the intermediate value property, appealing to both the Sarkovskii order and the full-projection property, each of which holds in a context involving the intermediate value property (Theorems~\ref{FPP} and \ref{GenSarkovskii}).

We then explore a pseudo converse in Section~\ref{d.2}, that is, how the indecomposability of $\invlim\{[0,1],f\}$ gives rise to a periodic cycle of $f$ with period not a power of two. This subsection focuses on organic maps and has Theorem \ref{IndecomposablePeriodic} as its main result.



\subsection{Periodicity giving rise to indecomposability} \label{d.1}

A.N.~Sarkovskii \cite{Sarkovskii} introduced the following ordering of the positive integers, now known as the Sarkovskii ordering, and used it to show that, for any continuous mapping of the real line into itself, the existence of a cycle of period $m$ follows from the existence of a cycle of period $n$ if and only if $n \preceq m$.
\[
3\prec5\prec7\prec9\prec\dots\]\[
3\cdot 2\prec 5\cdot 2\prec 7\cdot 2\prec 9\cdot 2\prec\dots\]\[
3\cdot 2^{2}\prec 5\cdot 2^{2}\prec 7\cdot 2^{2}\prec 9\cdot 2^{2}\prec \dots\]
\[\dots\]\[
\dots2^{4}\prec 2^{3}\prec 2^{2}\prec 2\prec 1
\]
The following theorem, extends one direction of Sarkovskii's Theorem to upper semicontinuous set valued functions with the intermediate value property.

\begin{theorem}\label{GenSarkovskii} \cite{OteyRyden}
Let $f:[0,1]\rightarrow2^{[0,1]}$ be upper semicontinuous and have the intermediate value property. If $f$ has a cycle of period $n$, then $f$ has cycles of every period $m$ such that $n\prec m$.
\end{theorem}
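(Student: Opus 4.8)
The plan is to transport the classical proof of the forward direction of Sarkovskii's theorem into the set-valued category, replacing the two places where the classical argument uses the intermediate value theorem for continuous maps by the connectedness supplied by the intermediate value property, chiefly in the form of Theorem~\ref{Intersection}: for every $a<b$, $G(f)\cap V_{[a,b]}$ is a continuum. For closed subintervals $I,J\subseteq[0,1]$, say \emph{$I$ $f$-covers $J$}, written $I\Rightarrow J$, if $J\subseteq\pi_{2}[G(f)\cap V_{I}]$; since $G(f)\cap V_{I}$ is a continuum, $\pi_{2}[G(f)\cap V_{I}]$ is a closed interval, so the relation $\Rightarrow$ is determined by finitely much data and is monotone in its covered argument. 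The two analytic inputs I would establish are: (i) a \emph{pullback lemma}, that if $I\Rightarrow J$ then there is a closed subinterval $I^{*}\subseteq I$ and a subcontinuum $C\subseteq G(f)\cap V_{I^{*}}$ with $\pi_{1}[C]=I^{*}$ and $\pi_{2}[C]\supseteq J$ (take $C$ to be an irreducible subcontinuum of $G(f)\cap V_{I}$ joining a point lying over one endpoint of $J$ to a point lying over the other, and use that subcontinua of $G(f)\cap V_{I}$ project onto subintervals in each coordinate); and (ii) an \emph{itinerary lemma}, that if $I_{0}\Rightarrow I_{1}\Rightarrow\cdots\Rightarrow I_{m-1}\Rightarrow I_{0}$ is a loop of $f$-coverings, then $f$ has a periodic orbit $z_{0},z_{1},\dots$ with $z_{j}\in I_{j}$ for all $j$ and period dividing $m$.

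Granting (i) and (ii), the combinatorial skeleton is exactly the classical one and uses only the relation $\Rightarrow$. Writing $n=2^{k}q$ with $q$ odd, one passes to $g=f^{2^{k}}$, which again has the intermediate value property (composition preserves it, as noted after Theorem~\ref{IVPCriterion}) and which, by a short set-valued lemma, inherits from the period-$n$ cycle of $f$ a cycle of some odd period $q'$ dividing $q$; when $q'\geq 3$ the Sarkovskii–Stefan analysis of how $g$ permutes the ordered orbit points $p_{1}<\cdots<p_{q'}$ produces, purely from the covering relation among the intervals $J_{i}=[p_{i},p_{i+1}]$, a loop realizing the Stefan graph, which contains a \emph{primitive} loop of every length $\ell$ with $q'\preceq\ell$. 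Feeding such a loop into the itinerary lemma yields a cycle of period dividing $\ell$, and primitivity of the loop together with the fact that the Stefan intervals overlap only at orbit points (whose forward itineraries are already pinned down) forces the period to be exactly $\ell$. Splicing back through the $2^{k}$ blocks and disposing of the pure power-of-two periods $m$ with $n\prec m$ by the same kind of reduction (a cycle of period $2^{j}$ of $f$ corresponds to a period-$2$ cycle of $f^{2^{j-1}}$, hence an interval self-covering, hence a period-$2^{j-1}$ cycle) then yields a cycle of period $m$ for every $m$ with $n\prec m$.

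The main obstacle I anticipate is the itinerary lemma (ii). In the single-valued case one pulls the loop back to a compact subinterval $Q\subseteq I_{0}$ on which $f^{m}$ is a continuous self-map whose image contains $Q$, and extracts a fixed point by the intermediate value theorem. Here the objects produced by (i) are subcontinua of $G(f)$, not graphs of functions, and the composite of two such ``partial'' relations need not be connected, so there is no single subinterval carrying a self-map to which an intermediate value argument applies. Instead I would prove directly that the set $Z=\{(z_{0},\dots,z_{m-1})\in I_{0}\times\cdots\times I_{m-1}:z_{j+1}\in f(z_{j})\text{ for }0\leq j\leq m-2,\ z_{0}\in f(z_{m-1})\}$ is nonempty: express $Z$ as the limit of the closed truncations $Z_{N}=\{(z_{0},\dots,z_{N})\in I_{0}\times I_{1}\times\cdots:z_{j+1}\in f(z_{j})\}$ (each nonempty by iterated use of (i), reading the loop cyclically), take a limit point by compactness, and close the cycle at the last coordinate by a Kakutani/KKM-type fixed-point argument whose convexity/connectedness hypotheses are exactly what Theorem~\ref{IVPCriterion} ($f(x)$ connected for all $x$) and Theorem~\ref{Intersection} supply. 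The secondary difficulties, which I would treat along the way, are (a) upgrading ``period dividing $\ell$'' to ``period exactly $\ell$'', handled via primitivity of the Stefan loop as above, and (b) verifying that the reductions between $f$ and $f^{2^{k}}$ genuinely hold at the level of set-valued cycles, where—per the remark after the definition of periodic orbits—a sequence may revisit a point without being periodic and a periodic orbit may revisit a point before completing its period.
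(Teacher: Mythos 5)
First, note that this paper does not prove Theorem~\ref{GenSarkovskii}; it is imported from \cite{OteyRyden}, so there is no in-paper argument to compare yours against. Judged on its own terms, your outline follows the standard covering-interval proof of Sarkovskii's theorem and correctly isolates where set-valuedness bites, but the crux --- your itinerary lemma (ii) --- is not actually resolved. The closing step is delegated to ``a Kakutani/KKM-type fixed-point argument,'' which is an appeal rather than a proof: Kakutani needs a genuine usc convex-valued self-correspondence of a convex set, and you have not exhibited one from your truncation limit. Moreover, the obstacle that drives you to this detour is illusory. The worry that ``the composite of two such partial relations need not be connected'' is answered by machinery the paper already records: restrictions $f|_{I}^{J}$ and compositions of upper semicontinuous functions with the intermediate value property again have the intermediate value property (see the discussion following the definition of $f|_{I}^{J}$), so by Theorem~\ref{Intersection} the composite $f|_{I_{m-1}}^{I_{0}}\circ\cdots\circ f|_{I_{0}}^{I_{1}}$ along a covering loop has connected graph over any closed subinterval of its domain. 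Once your pullback lemma is strengthened to produce $I^{*}$ on which $f(x)\cap J\neq\emptyset$ for every $x\in I^{*}$ and $f|_{I^{*}}^{J}$ is surjective onto $J$, the loop yields a single usc relation $F$ on a closed interval $Q$ with connected graph and $F[Q]\supseteq Q$, and a fixed point falls out of the elementary diagonal-crossing argument: the closed sets $\{(x,y)\in G(F):y\leq x\}$ and $\{(x,y)\in G(F):y\geq x\}$ cover the connected graph and are both nonempty, hence meet. That argument needs to be made explicit; as written, the lemma on which everything rests is asserted, not proved.

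Second, the combinatorial half also needs repair. You run the Stefan analysis on ``the ordered orbit points $p_{1}<\cdots<p_{q'}$'' of a period-$q'$ cycle of $g=f^{2^{k}}$, but, as the paper's remark after the definition of periodic orbits warns, a set-valued cycle of period $q'$ need not consist of $q'$ distinct points (for instance $(a,b,a)$ with $a\in g(a)\cap g(b)$ and $b\in g(a)$ is a legitimate period-$3$ cycle on two points). The Stefan partition into consecutive orbit intervals, and the existence of primitive loops of every admissible length, must therefore be reargued --- e.g.\ by showing that a cycle with repeated entries already forces an interval $J$ with $g(x)\supseteq J$ for some $x\in J$, whence loops of all lengths, or by extracting a cycle with distinct entries. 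Likewise, the reduction from $f$ to $f^{2^{k}}$ only gives phase subsequences of periods $d_{0},\dots,d_{2^{k}-1}$ with least common multiple $q$; you need that some $d_{r}$ is an odd number at least $3$, not that some $d_{r}=q$. These points are fixable, but they are precisely where a set-valued proof departs from the classical one, and the proposal currently gestures at them rather than carrying them out.
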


\begin{lemma}\label{ConstantInterval}
Let $f:[0,1]\rightarrow 2^{[0,1]}$ be upper semicontinuous, surjective, and almost nonfissile and $G(f)$ be connected and have empty interior. If there is some $y\in[0,1]$ and a nondegenerate interval $I\subseteq [0,1]$ such that $y\in f(x)$ for every $x\in I$, then $f$ is constant and single valued on $I$ and $f[I]=\{y\}$.
\end{lemma}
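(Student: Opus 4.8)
The plan is to argue by contradiction, using the almost nonfissile hypothesis to locate a nonfissile point $x_0$ in the interior of $I$ whose single image value differs from $y$, and then to use the intermediate value property together with the assumption that $y \in f(x)$ for all $x \in I$ to build an open box inside $G(f)$, contradicting that $G(f)$ has empty interior. First I would suppose, for contradiction, that $f$ is not identically $\{y\}$ on $I$; then there is a point $p \in I$ with $f(p) \neq \{y\}$, and I may take $p$ in the interior of $I$ (shrinking $I$ slightly if $p$ is an endpoint, using that $I$ is nondegenerate). Since $f$ is almost nonfissile, the nonfissile points of $G(f)$ are dense in $G(f)$; choosing a point of $f(p)$ other than $y$ and approximating within $G(f)$, I obtain a nonfissile point $x_0$ in the interior of $I$ with $f(x_0) = \{z\}$ and $z \neq y$ — here I should be a little careful and instead argue: the set of $x \in \mathrm{int}(I)$ with $f(x) = \{y\}$ cannot be all of $\mathrm{int}(I)$ (else, by upper semicontinuity and connectedness of $G(f)$, one gets $f \equiv \{y\}$ on $I$ after all, contradicting nothing yet but not helping), so there is a nonfissile $x_0 \in \mathrm{int}(I)$ with image $\{z\}$, $z \ne y$; the existence of such a nonfissile point with image $\neq \{y\}$ follows because the nonfissile points of $G(f)$ are dense in $G(f)$ and $G(f)$ contains points $(x,w)$ with $x \in \mathrm{int}(I)$ and $w \neq y$.

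Next, having fixed such $x_0 \in \mathrm{int}(I)$ with $f(x_0) = \{z\}$, $z \neq y$, say $z < y$ (the case $z > y$ is symmetric), I would pick points $a, b \in I$ with $a < x_0 < b$. By hypothesis $y \in f(a)$ and $y \in f(b)$, while $z \in f(x_0)$ with $z < y$. For any $w$ with $z < w < y$, applying the intermediate value property to the pair $(a, y)$ and $(x_0, z)$ gives a point of $\overline{a x_0}$ whose image contains $w$, and applying it to $(x_0, z)$ and $(b, y)$ gives a point of $\overline{x_0 b}$ whose image contains $w$; more to the point, I want a whole interval's worth of $x$'s with $w \in f(x)$. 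The key step is to show that for every $w \in (z, y)$ there is an open interval of points $x$ near $x_0$ with $w \in f(x)$: choose $w_1, w_2$ with $z < w_1 < w_2 < y$; by weak continuity (Theorem~\ref{IVPCriterion}) and the intermediate value property applied on $\overline{a x_0}$ and on $\overline{x_0 b}$, the graph $G(f) \cap V_{[a,b]}$ is connected and, since it meets the horizontal line at height $z$ (at $x_0$) and at height $y$ (at $a$ and $b$), for each height $w \in (z,y)$ the slice $\{x \in [a,b] : w \in f(x)\}$ is nonempty. Then, because $f(x_0) = \{z\}$ and $f$ is upper semicontinuous, there is $\delta > 0$ with $f(x) \subseteq [0,1] \setminus [w_2, 1]$ for $|x - x_0| < \delta$, i.e. $\max f(x) < w_2$ there; combined with $y \in f(x)$ being impossible for such $x$, I get that $G(f)$ restricted to $(x_0 - \delta, x_0 + \delta) \times (w_1, w_2)$ must fill out a rectangle — this is where Theorem~\ref{Intersection} does the real work: $G(f) \cap V_{[a',b']}$ is connected for any subinterval and equals the closure of its interior part, and the connected set $G(f) \cap \big((x_0-\delta,x_0+\delta)\times[0,1]\big)$ separates heights below $z$ from heights near $y$, forcing it to contain $(x_0 - \delta, x_0 + \delta) \times \{w\}$ for a nondegenerate range of $w$, hence an open box, contradicting $\mathrm{int}\, G(f) = \emptyset$.

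The main obstacle is making the box-construction rigorous: I need to convert "the connected set $G(f)\cap V_{[a,b]}$ meets height $z$ near $x_0$ and height $y$ at the endpoints, and the $y$-height is actually attained for every $x\in I$" into an honest two-dimensional open subset of $G(f)$. The cleanest route is to localize around $x_0$: on a small interval $[x_0-\delta, x_0+\delta] \subseteq I$ we have $y \in f(x)$ for all $x$ (so the top edge, at height $y$, lies entirely in the graph over this whole interval) while $\max f(x_0) = z < y$; fix any $w$ with $z < w < y$ and any $w' $ with $w < w' < y$; by upper semicontinuity shrink $\delta$ so that $z \in f(x)$ fails for, say, $\min f(x) $ staying below $w$ near $x_0$... — more simply, apply the intermediate value property with $x_1 = x_0$, $y_1 = z$, $x_2$ ranging over $(x_0, x_0+\delta]$, $y_2 = y \in f(x_2)$: for every $w \in (z,y)$ and every such $x_2$ there is $x \in (x_0, x_2)$ with $w \in f(x)$; letting $x_2 \downarrow x_0$ shows the closed set $\{x \in [x_0, x_0+\delta] : w \in f(x)\}$ accumulates at $x_0$, and varying $w$ over a nondegenerate interval $(w_1, w_2) \subseteq (z,y)$ together with Theorem~\ref{Intersection} (which says $G(f) \cap V_{[x_0, x_0+\delta]} = \overline{G(f) \cap V_{(x_0, x_0 + \delta)}}$ is connected) yields that $G(f)$ contains an open subset of $(x_0, x_0+\delta) \times (w_1, w_2)$. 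This contradicts $\mathrm{int}\, G(f) = \emptyset$, completing the proof. Once $f \equiv \{y\}$ on $I$ is forced, the conclusions $f$ is constant, single-valued on $I$, and $f[I] = \{y\}$ are immediate.
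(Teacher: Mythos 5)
Your argument contains the right key ingredient --- density of the nonfissile points of $G(f)$ --- but the structure is off, and the long box-construction is both unnecessary and not sound as written.

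First, the moment you produce a nonfissile point $x_0\in\mathrm{int}(I)$ with $f(x_0)=\{z\}$ and $z\neq y$, you are already done: the hypothesis says $y\in f(x)$ for \emph{every} $x\in I$, so $y\in f(x_0)=\{z\}$ forces $z=y$, a contradiction. In other words, every nonfissile point of $I$ must have image exactly $\{y\}$, and your appeal to density of nonfissile points of $G(f)$ near a point $(p,w)$ with $w\neq y$ is precisely the contradiction. Everything in your second and third paragraphs is therefore superfluous. Worse, that material is internally confused: you write that ``$y\in f(x)$ [is] impossible for such $x$'' near $x_0$, when the hypothesis guarantees $y\in f(x)$ for all $x\in I$ (indeed, upper semicontinuity at $x_0$ with $f(x_0)=\{z\}$, $z<y$, already clashes with $y\in f(x)$ for nearby $x\in I$ --- yet another immediate contradiction you pass over). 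The box argument also leans on the intermediate value property and Theorem~\ref{Intersection}, which are not hypotheses of this lemma, and you acknowledge yourself that the construction is not rigorous.

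Second, your reduction to $p\in\mathrm{int}(I)$ does not work when the only offending point is an endpoint of $I$: shrinking $I$ discards $p$ rather than making it interior, and upper semicontinuity alone does not rule out $f(p)\supsetneq\{y\}$ at an endpoint while $f\equiv\{y\}$ on the interior. The paper's proof is the direct (non-contradiction) form of your idea and handles this: for $x\in I$ and $y'\in f(x)$, weak continuity produces a sequence in $G(f)$ converging to $(x,y')$ with first coordinates in $I$ on one side of $x$, and almost nonfissility lets one take these to be nonfissile points $(x_n,y_n)$; then $y\in f(x_n)=\{y_n\}$ gives $y_n=y$ for all $n$, hence $y'=y$. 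I recommend rewriting along those lines.
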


\begin{proof}
Let $x\in I$ and $y'\in f(x)$. Then either $x>\inf I$ or $x<\sup I$. The two cases proceed similarly, so we shall prove the result for $x>\inf I$. As $f$ is weakly continuous, there is a sequence $\{ (x_{n},y_{n})\}_{n\in\omega}$ in $G(f)$ converging to $(x,y')$ such that $x_{n}\in I$ and $x_{n}<x$. Since $f$ is almost nonfissile, we may choose each $(x_{n},y_{n})$ so that $x_{n}$ is a nonfissile point of $f$. Thus $f(x_{n})=\{y_{n}\}$ for all $n$. But $y\in f(x_{n})$, so $y_{n}=y$ for all $n$. As $y_{n}\rightarrow y'$, this implies $y'=y$. As $x$ and $y'$ were arbitrary, $f[I]=\{y\}$.
\end{proof}

\begin{theorem}\label{PeriodicIndecomposable}
Let $f:[0,1]\rightarrow 2^{[0,1]}$ be upper semicontinuous, surjective, almost nonfissile, light, and have the intermediate value property, and $G(f)$ have empty interior. If $f$ has a periodic orbit of period not a power of 2, then $\invlim\{[0,1],f\}$ contains an indecomposable subcontinuum.
\end{theorem}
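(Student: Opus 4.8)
The plan is to imitate the classical Barge–Martin argument, but replacing the two classical ingredients—Sarkovskii's theorem and the full-projection property for hereditarily decomposable-free-like inverse limits—with their intermediate-value-property analogues established above, namely Theorems~\ref{GenSarkovskii}, \ref{FPP}, and \ref{FPP2}. First I would invoke Theorem~\ref{GenSarkovskii}: since $f$ has a periodic orbit whose period is not a power of two, some period in the top two rows of the Sarkovskii order is present, and in particular $f$ has a cycle of period $3m$ for some $m$ (equivalently, a cycle of period $p$ with $p$ not a power of two that we may take as small as convenient). Passing to the map $g=f^{N}$ for a suitable $N$ (a power of two times something, chosen so that $g$ has a fixed point structure giving a "$3$-horseshoe"), I get points $a<b<c$ in $[0,1]$ with $b\in g(a)\cap g(c)$-type covering relations; more precisely, I want closed intervals whose $g$-images cover a common interval in the manner $I\to J$, $J\to I\cup J$, which forces a subcontinuum of $\invlim\{[0,1],g\}$ on which the shift acts like a one-sided $2$-shift factor, hence an indecomposable continuum.

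The heart of the construction is to build, inside $\invlim\{[0,1],f\}$, a subcontinuum $H$ that projects onto a fixed nondegenerate interval $[\alpha,\beta]$ in every coordinate (or in infinitely many coordinates along an arithmetic progression governed by the period), and then to show $H$ is indecomposable. For the projection step I would use the covering relations coming from the period-not-a-power-of-two cycle together with the intermediate value property: if $g[\,\overline{uv}\,]\supseteq\overline{u'v'}$ whenever $u\in g^{-1}$-preimages are chosen appropriately, then by the intermediate value property (Theorem~\ref{Intersection}) one can pull back nested interval chains and take their inverse limit. Concretely, I would define $H$ as the inverse limit over the subsequence $\{k\cdot r\}$ (with $r$ the relevant period) of the bonding map $f^{r}$ restricted to a fixed interval on which $f^{r}$ is "surjective enough," realizing $H$ as a genuine inverse limit of nondegenerate subcontinua; Theorem~\ref{InvLim} then gives connectedness and Theorem~\ref{FPP2} gives that $H$ has the full-projection property once I check that $f^{r}$ restricted to the relevant interval is still surjective, light, almost nonfissile, upper semicontinuous, and has the intermediate value property—the first four being inherited from the composition/restriction remarks in Section~2, and the intermediate value property being preserved under composition and restriction as noted there (and $G(f)$ having empty interior is needed to keep lightness and almost-nonfissility from degenerating, which is where that hypothesis enters, via Lemma~\ref{ConstantInterval} to rule out flat spots).

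To conclude indecomposability I would argue that $H$ is irreducible between two of its points and, in fact, that every proper subcontinuum of $H$ has empty interior in $H$: a proper subcontinuum $K\subsetneq H$ must fail to project onto $[\alpha,\beta]$ in infinitely many coordinates by the full-projection property of $H$, so its projections eventually shrink; feeding this through the expansion provided by the "$3$-horseshoe" covering relations for $f^{r}$—where one interval's image covers the union of two intervals—forces the diameters of the projections $\pi_{kr}[K]$ to grow unless $K=H$, a contradiction. This is the step I expect to be the main obstacle: in the set-valued setting the covering relation $g[\overline{uv}]\supseteq$ (something) does not by itself let you solve backwards uniquely, so I must use the intermediate value property to select, at each stage, subintervals whose $g$-images still cover the target, and I must ensure that almost-nonfissility lets me do this selection through nonfissile points so that the pulled-back points assemble into an honest point of the inverse limit; keeping track of this simultaneously with the metric expansion estimate, uniformly along the progression $\{kr\}$, is the delicate part. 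Once that expansion estimate is in hand, indecomposability of $H$ follows from the standard fact that a continuum which is irreducible between two points and in which no proper subcontinuum has interior is indecomposable, and $H$ is the desired indecomposable subcontinuum of $\invlim\{[0,1],f\}$.
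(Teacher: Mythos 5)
Your opening move (Theorem~\ref{GenSarkovskii} to upgrade the given cycle to one of period $3\cdot 2^{k+1}$, then Theorem~\ref{FPP2} to get a subcontinuum that is the inverse limit of its own nondegenerate projections and has the full-projection property) matches the paper. But the way you propose to finish has a genuine gap, and it is exactly at the step you flag as ``the main obstacle.''

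Your plan is to derive indecomposability from a metric statement: that the horseshoe-type covering relations force the diameters of the projections $\pi_{kr}[K]$ of a proper subcontinuum $K$ to grow unless $K=H$. Nothing in the hypotheses supports such an expansion estimate. A covering relation $g[\overline{uv}]\supseteq\overline{u'v'}$ constrains the image of the whole interval, not of small subintervals; with only upper semicontinuity, lightness, and the intermediate value property, $f$ can be arbitrarily ``flat'' on most of its domain, so a small projection in one coordinate need not force a large projection in the previous one. (Indeed Lemma~\ref{ConstantInterval} only rules out literally constant stretches, not near-constant ones.) There is also a secondary issue: you want to use $f^{r}$ restricted to an interval as a bonding map and apply Theorem~\ref{FPP2}, but Section~2 only records that the intermediate value property survives composition; lightness and almost-nonfissility of $f^{r}$ do not follow automatically from those of $f$ (the preimage of a point under $f^r$ involves preimages of whole intervals under $f$), so the hypotheses of Theorem~\ref{FPP2} are not verified for your proposed inverse sequence.

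The paper avoids both problems by staying combinatorial and never forming powers of $f$ as bonding maps. It takes the point $x$ of the inverse limit modeling the period-$3\cdot 2^{k+1}$ orbit, observes that $x$, $h^{2^{k+1}}(x)$, $h^{2^{k+2}}(x)$ form a period-$3$ orbit of the shift $h^{2^{k+1}}$, and lets $S$ be a subcontinuum irreducible about these three points, written via Theorem~\ref{FPP2} as an inverse limit of restrictions of $f$ itself. If a proper subcontinuum $H$ of $S$ contained two of the three points, then in infinitely many coordinates the projection of the third point would lie between the projections of the other two, hence in the (connected) projection of $H$; weak continuity, almost-nonfissility, and Lemma~\ref{ConstantInterval} then propagate this membership and nondegeneracy down through all coordinates, and since $H$ is the inverse limit of its own projections the third point lies in $H$ --- contradiction. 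Thus $S$ is irreducible between each pair of the three points, hence indecomposable. If you want to salvage your outline, replace the expansion estimate with this three-point irreducibility argument; the rest of your scaffolding is compatible with it.
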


\noindent {\bf Remark.}
A natural question arising from Theorem~\ref{PeriodicIndecomposable} is whether it remains true without the assumption that the bonding function be almost nonfissile.  It does not.  Consider the function $f$ defined in Example~\ref{IVPNotANF}. We show in \cite{DunnRyden2} that $f$ is part of a family of upper semicontinuous, surjective functions with the intermediate value property and periodic cycles of all periods that are not almost nonfissile and have hereditarily decomposable inverse limits.

Although $f$ is not light, other members of the family are.  They can be obtained by tweaking the value of $f$ on the open intervals in the complement of $C_0$ so that instead of being identically zero, they are light but sufficiently small.  Such examples play an important role in our understanding of the connection between periodicity and indecomposability in generalized inverse limits of $[0,1]$.  In particular, they show that the assumption of almost nonfissile in Theorems~\ref{MainResult}(\ref{MainResult.1}), \ref{PeriodicIndecomposable}, and \ref{Interior} cannot be dropped.


\begin{proof}[Proof of Theorem \ref{PeriodicIndecomposable}]
Suppose $f$ has an orbit of period $n\cdot2^{k}$. By the Theorem \ref{GenSarkovskii}, there is a periodic orbit of $f$ with period $3\cdot 2^{k+1}$. Let $x\in\invlim\{[0,1],f\}$ be the point that models this orbit. Then $(x_{3\cdot2^{k+1}-1},\dots, x_{1},x_{0})$ is a cycle $f$ of period $3\cdot 2^{k+1}$. Let $h$ be the forgetful shift on $\invlim\{[0,1],f\}$. Then $x$ has a period 3 cycle under $h^{2^{k+1}}$, namely $\left(x,h^{2^{k+1}}(x),h^{2^{k+2}}(x)\right)$. To show this, suppose to the contrary that $x$ does not have a period 3 cycle. By the construction of $x$, $h^{3\cdot2^{k+1}}(x)=x$. So either $h^{2^{k+1}}(x)=x$ or $h^{2^{k+2}}(x)=x$. If $h^{2^{k+1}}(x)=x$, then for all $n$, $x_{n+2^{k+1}}=x_{n}$, contradicting the fact $(x_{0},x_{1},\dots,x_{3\cdot2^{k+1}-1})$ is an orbit of period $3\cdot 2^{k+1}$. By a similar argument, $h^{2^{k+2}}(x)\neq x$. Thus $x$ has an orbit of period 3 under $h^{2^{k+1}}$.

Let $S$ be a subcontinuum of $\invlim\{[0,1],f\}$ that is irreducible about $x$, $h^{2^{k+1}}(x)$, and $h^{2^{k+2}}(x)$. By Theorem \ref{FPP2}, there are restrictions $f'_{n}$ of $f$ such that each $f'_{n}$ inherits the properties of $f$ listed in the hypothesis, $S=\invlim\{\pi_{n}(S),f'_{n}\}$, and $S$ has the full-projection property. We show that $S$ is indecomposable by showing it is irreducible about any two points of $x$, $h^{2^{k+1}}(x)$, and $h^{2^{k+2}}(x)$.

By way of contradiction, suppose $S$ is not irreducible between two points of $\{x,h^{2^{k+1}}(x), h^{2^{k+2}}(x)\}$, say $x$ and $h^{2^{k+1}}(x)$. Then there is a proper subcontinuum $H\subsetneq S$ containing $x$ and $h^{2^{k+1}}(x)$. So $h^{2^{k+2}}(x)\notin H$ as $S$ is irreducible about $x$, $h^{2^{k+1}}(x)$, and $h^{2^{k+2}}(x)$.

Since $(x_{3\cdot2^{k+1}-1},\dots, x_{1},x_{0})$ is a cycle $f$ of period $3\cdot 2^{k+1}$, there is some $i\in\{0,1,\dots, 2^{k+1}-1\}$ such that for all $n\in\mathbb{N}$, $\pi_{3n\cdot 2^{k+1}+i}(x)\neq\pi_{3n\cdot 2^{k+1}+i}(h^{2^{k+1}}(x))$. As $h^{2^{k+1}}$ permutes $x$, $h^{2^{k+1}}(x)$, and $h^{2^{k+2}}(x)$, there is some $j\in\{0,1,2\}$ such that for all $n\in\mathbb{N}$, $\pi_{(3n+j)2^{k+1}+i}(h^{2^{k+2}}(x))$ is between $\pi_{(3n+j)2^{k+1}+i}(x)$ and $\pi_{(3n+j)2^{k+1}+i}(h^{2^{k+1}}(x))$. Furthermore, $\pi_{(3n+j)2^{k+1}+i}(h^{2^{k+2}}(x))$ is distinct from at least one of $\pi_{(3n+j)2^{k+1}+i}(x)$ or $\pi_{(3n+j)2^{k+1}+i}(h^{2^{k+1}}(x))$. So $\pi_{(3n+j)2^{k+1}+i}[H]$ is nondegenerate for each $n$, and $\pi_{(3n+j)2^{k+1}+i}(h^{2^{k+2}}(x))\in\pi_{(3n+j)2^{k+1}+i}[H]$. As $f$ is weakly continuous and almost nonfissile, there is a sequence of nonfissile points $\{(x_{k},y_{k})\}_{k\in\mathbb{N}}$ of $G(f)$ such that $x_{k}\in\pi_{(3n+j)2^{k+1}+i}[H]$ and \[(x_{k},y_{k})\rightarrow (\pi_{(3n+j)2^{k+1}+i}(h^{2^{k+2}}(x)), \pi_{(3n+j)2^{k+1}+i-1}(h^{2^{k+2}}(x))).\] Since $f(x_{k})=\{y_{k}\}$, it follows that $y_{k}\in\pi_{(3n+j)2^{k+1}+i-1}[H]$. Then $\pi_{(3n+j)2^{k+1}+i-1}(h^{2^{k+2}}(x))\in \pi_{(3n+j)2^{k+1}+i-1}[H]$ because \\ $y_{k} \rightarrow \pi_{(3n+j)2^{k+1}+i-1}(h^{2^{k+2}}(x))$ and $\pi_{(3n+j)2^{k+1}+i-1}[H]$ is closed. Furthermore, since $f$ is almost nonfissile and light and $\pi_{(3n+j)2^{k+1}+i}[H]$ is nondegenerate, $\pi_{(3n+j)2^{k+1}+i-1}[H]$ is nondegerate by Lemma \ref{ConstantInterval}.

Proceeding inductively, we see that $\pi_{l}[H]$ is nondegenerate and \\ $\pi_{l}(h^{2^{k+2}}(x))\in\pi_{l}[H]$ for all $l\leq (3n+j)2^{k+1}+i$. As this holds for any $n\in\mathbb{N}$, $\pi_{l}(h^{2^{k+2}}(x))\in\pi_{l}[H]$ for all $l\in\mathbb{N}$. Since $H$ is the inverse limit of its own projections by Theorem \ref{FPP2}, $h^{2^{k+2}}(x)\in H$, a contradiction. Therefore $S$ is irreductible about any two points of $\{x,h^{2^{k+1}}(x), h^{2^{k+2}}(x)\}$ and is indecomposable.
\end{proof}

\begin{theorem} \label{Interior}
Suppose $f:[0,1]\rightarrow 2^{[0,1]}$ is upper semicontinuous, surjective, has the intermediate value property, and has an orbit of period not a power of 2. If $f|_{[0,1]\setminus \pi_{1}(int(G(f)))}$ is almost nonfissile and light, then $\invlim\{[0,1],f\}$ contains an indecomposable subcontinuum.
\end{theorem}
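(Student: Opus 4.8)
The plan is to deduce Theorem~\ref{Interior} from Theorem~\ref{PeriodicIndecomposable} by replacing $f$ with a bonding function $g$ whose graph is a ``thin'' subcontinuum of $G(f)$ that still carries a cycle of period not a power of two. Write $E=\pi_{1}[\mathrm{int}\,G(f)]$ and $F=[0,1]\setminus E$; then $E$ is open, $F$ is closed, $f|_{F}$ is light and almost nonfissile by hypothesis, and $G(f)\cap(F\times[0,1])$ has empty interior because the first coordinate of every point of $\mathrm{int}\,G(f)$ lies in $E$. Since $f$ is surjective with the intermediate value property, $G(f)$ is a continuum (Theorem~\ref{Intersection}). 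Fix a cycle $(x_{0},x_{1},\dots,x_{m-1})$ of $f$ whose period $m$ is not a power of two. I will build $g\colon[0,1]\to 2^{[0,1]}$ with $G(g)\subseteq G(f)$ that is upper semicontinuous, surjective, light, almost nonfissile, has the intermediate value property, has $\mathrm{int}\,G(g)=\emptyset$, and retains $(x_{0},\dots,x_{m-1})$ as a cycle. Granting this, Theorem~\ref{PeriodicIndecomposable} (whose proof supplies the Sarkovskii step) yields an indecomposable subcontinuum of $\invlim\{[0,1],g\}$; and since $G(g)\subseteq G(f)$ forces $\invlim\{[0,1],g\}\subseteq\invlim\{[0,1],f\}$, that subcontinuum is an indecomposable subcontinuum of $\invlim\{[0,1],f\}$, as required.

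To construct $g$, put $g=f$ on $F$, so that the portion of $G(g)$ over $F$ is $G(f|_{F})$, which has empty interior and is light and almost nonfissile. For each component interval $(a,b)$ of $E$ (with $a,b\in F$; the cases $a=0$ or $b=1$ are analogous), Theorem~\ref{Intersection} says $G(f)\cap V_{[a,b]}$ is a continuum equal to $\overline{G(f)\cap V_{(a,b)}}$, with first-coordinate projection $[a,b]$ and second-coordinate projection an interval containing $f(a)\cup f(b)$. Inside $G(f)\cap V_{[a,b]}$ I would select a subcontinuum $K_{(a,b)}$ that is the graph of an upper semicontinuous function on $[a,b]$ with connected point-images and empty interior, that is light and almost nonfissile, whose second-coordinate projection is the smallest interval containing $f(a)\cup f(b)\cup\{x_{i+1}\colon x_{i}\in(a,b)\}$, that meets $\{a\}\times f(a)$ and $\{b\}\times f(b)$ at the points where the $F$-part of $G(g)$ should attach, and that passes through $(x_{i},x_{i+1})$ whenever $x_{i}\in(a,b)$. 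Let $G(g)$ be the union of $G(f|_{F})$ with all the $K_{(a,b)}$; surjectivity of $g$ is arranged using the freedom available in each thick strip together with the surjectivity of $f$.

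It then remains to verify that $g$ has the intermediate value property, which by Theorem~\ref{Intersection} amounts to showing that $G(g)\cap V_{[c,d]}$ is connected and equals $\overline{G(g)\cap V_{(c,d)}}$ for every $c<d$. The delicate case is when $[c,d]$ straddles the boundary between $E$ and $F$: prescribing the second-coordinate projection of each $K_{(a,b)}$ to be the smallest interval containing $f(a)\cup f(b)$ (and the relevant cycle values) is exactly what ensures no intermediate value is lost there, and the attachment conditions give connectedness. Upper semicontinuity of $g$, emptiness of $\mathrm{int}\,G(g)$, lightness, and almost nonfissility follow from the corresponding properties of $f|_{F}$ and of the $K_{(a,b)}$ by routine local arguments, and the cycle $(x_{0},\dots,x_{m-1})$ is a cycle of $g$ of period $m$ by construction.

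I expect the crux to be the selection of $K_{(a,b)}$, specifically arranging it to be almost nonfissile: by the remark following the definition of almost nonfissile, an empty-interior graph alone does not imply almost nonfissility, so this step should require a separate construction lemma building $K_{(a,b)}$ as a limit of polygonal (hence single-valued and light) graphs that thread the finitely many prescribed points while keeping the set of fissile points nowhere dense in the limit graph. A secondary difficulty is the bookkeeping in verifying the intermediate value property of the glued function $g$ across every transition between $E$ and $F$, including components of $E$ that abut $0$ or $1$. An alternative route would be to mimic the proof of Theorem~\ref{PeriodicIndecomposable} directly after first showing that the relevant cycle, and the subcontinuum irreducible about its associated period-three orbit under the appropriate power of the shift, can be taken with all coordinates in $F$; but locating the cycle in $F$ appears to need essentially the same analysis of the thick part of $G(f)$.
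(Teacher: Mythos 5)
Your proposal follows essentially the same route as the paper: it reduces to Theorem~\ref{PeriodicIndecomposable} by replacing $f$, over each component $O_n$ of $\pi_1[\mathrm{int}\,G(f)]$, with a light, almost nonfissile, mostly single-valued selection of $G(f)$ that retains the cycle and attaches correctly to $f$ on the complement, and the crux you flag (the construction lemma for $K_{(a,b)}$ and the weak-continuity bookkeeping at the boundary, including components of $E$ accumulating at a point) is exactly what the paper's conditions (1)--(3) and its three-case verification supply. Two details to adjust when writing it up: the second-coordinate projection of the piece over $\overline{O}_n$ should be all of $f[\overline{O}_n]$ rather than the smallest interval through $f(a)\cup f(b)$ and the cycle values (the paper inserts points $a_n,b_n$ hitting $\max f[\overline{O}_n]$ and $\min f[\overline{O}_n]$) so that surjectivity survives, and at each attachment point $x$ the graph over the adjacent component must accumulate on the entire fiber $\{x\}\times f(x)$ (a ray with remainder $f(x)$), not merely meet it at designated points, or weak continuity fails there when $f(x)$ is nondegenerate.
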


\begin{proof}
If $\text{int }G(f)=\emptyset$, then the conclusion follows form Theorem \ref{PeriodicIndecomposable}. Suppose $\text{int }G(f)\neq\emptyset$. Let $(x_{0},x_{1},\dots,x_{p-1})$ be a cycle of $f$ where $p$ is not a power of 2. It is sufficient to show there is a map $g:[0,1]\rightarrow 2^{[0,1]}$ that is upper semicontinuous, almost nonfissile, and light, that has the intermediate value property and retains $(x_{0},x_{1},\dots,x_{p-1})$ as a periodic cycle, and such that $G(g)$ has empty interior and $G(g)\subseteq G(f)$. Then $\invlim\{[0,1],g\}$ is a subcontinuum of $\invlim\{[0,1],f\}$ that contains an indecomposable subcontinuum by Theorem \ref{PeriodicIndecomposable}.

Note $\pi_{1}[\text{int }G(f)]$ is an open subset of $[0,1]$. Let $\{O_{n}\}_{n\in\mathbb{N}}$ be an enumeration of the components of $\pi_{1}[\text{int }G(f)]$. We construct $g$ as follows: if $x\in[0,1]\setminus\pi_{1}[\text{int }G(f)]$, let $g(x)=f(x)$. For each $n$, we construct $G(g)$ on $\overline{O}_{n}$ to contain any of $(x_{0},x_{1})$, $(x_{1},x_{2})$,\dots,$ (x_{p-1},x_{0})$ for which $x_{i}\in O_{n}$, and some $(a_{n},\max f(\overline{O}_{n}))$, $(b_{n},\min f(\overline{O}_{n}))$ where $a_{n},b_{n}\in\overline{O}_{n}$. To that end, let $C=\{x_{0},x_{1},\dots,x_{p-1}\}\cup\bigcup_{n\in\mathbb{N}}\{a_{n},b_{n}\}$. Define $g(x)$ to be $f(x)$ if $x\in C$.

 Note that for each $n$, $C\cap O_{n}$ is finite. Define $g$ on $O_{n}\setminus C$ to be single-valued and continuous according to the following conditions:
\begin{enumerate}
\item $g(x)\subseteq f(x)$,
\item $g$ is light on $O_{n}\setminus C$ and
\item if $x$ is in $C$ or $\text{bd }O_{n}$, then for any component $U$ of $O_{n}\setminus C$ with $x\in\overline{U}$, $\overline{G(g|_{U})}\cap (\{x\}\times[0,1])=\{x\}\times g(x)$.
\end{enumerate}
That $g$ may be light on $O_{n}\setminus C$ while maintaining $G(g)\subseteq G(f)$ follows from the fact that $O_{n}\subseteq \pi_{1}[\text{int }G(f)]$. Regarding (3), since $C\cap O_{n}$ is finite and $f$ is weakly continuous, $g$ may also be constructed such that as $y$ approaches $x$ from within $U$, the graph of $g$ is a ray with remainder $g(x)$. Therefore such a map $g$ exists. Note that by (1) and the fact that $g(x)=f(x)$ on $C$, $g[\overline{O}_{n}]=f[\overline{O}_{n}]$.

Note that $(x_{0},x_{1},\dots,x_{p-1})$ is a periodic cycle of $g$. By this construction, $g$ is light and almost nonfissile on each $O_{n}$ and $G(g)\cap V_{O_{n}}$ is connected. Note that if $x\in \text{bd } O_{n}$, condition (3) becomes $\overline{G(g|_{U})}\cap (\{x\}\times[0,1])=\{x\}\times g(x)=\{x\}\times f(x)$. Then since $g|_{[0,1]\setminus \pi_{1}[\text{int }G(f)]}=f|_{[0,1]\setminus \pi_{1}[\text{int }G(f)]}$, $g$ is almost nonfissile and light on $[0,1]$, and $G(g)$ is connected. It remains to show $g$ has the intermediate value property. Since $g(x)$ is connected for each $x\in[0,1]$, it is sufficient to show that $g$ is weakly continuous.

We show that $g$ is weakly continuous from the left. The proof that $g$ is weakly continuous from the right is similar. Let $(x,y)\in G(g)$ with $x>0$. Suppose first $x\in O_{n}$ for some $n$. If $x\in C$, then by (3) there is a sequence $\{(x_{i},y_{i})\}_{i\in\omega}$ in $G(g)$ such that $x_{i}\in O_{n}\cap (0,x)$ for all $i$ and $(x_{i},y_{i})\rightarrow (x,y)$. Thus $g$ is weakly continuous at $x$ from the left. If $x\notin C$, then since $g$ is single-valued and continuous on $O_{n}\setminus C$, it follows that $g$ is weakly continuous at $x$ from the left.

Next suppose $x\not\in O_{n}$ for any $n$. Then either $x\in [0,1]\setminus \overline{\pi_{1}[\text{int }G(f)]}$, $x= \sup O_{n}$ for some $n$, or there is a subsequence $O_{n_{k}}$ such that $x>\sup O_{n_{k}}$ for all $k$ but $x=\sup\bigcup_{k}O_{n_{k}}$.

Case 1: Suppose $x\in [0,1]\setminus \overline{\pi_{1}[\text{int }G(f)]}$. Since $f$ is weakly continuous, there is a sequence $\{(x_{i},y_{i})\}_{i\in\omega}$ in $G(f)$ such that $x_{i}<x$ for all $i$ and $(x_{i},y_{i})\rightarrow (x,y)$. Then there is some $N\in\mathbb{N}$ such that for $i\geq N$, $x_{i}\in[0,1]\setminus\pi_{1}[\text{int }G(f)]$. Since $g$ agrees with $f$ on $[0,1]\setminus\overline{\pi_{1}[\text{int }G(f)]}$, $\{(x_{i},y_{i})\}_{i\geq N}$ is a sequence in $G(g)$ converging to $(x,y)$.

Case 2: Suppose $x=\sup O_{n}$ for some $n$. Then by (2), there is a sequence $\{(x_{i},y_{i})\}_{i\in\omega}$ in $G(g)$ such that $x_{i}\in O_{n}$ for all $i$ and $(x_{i},y_{i})\rightarrow (x,y)$.

Case 3: Suppose there is a sequence $\{O_{n_{k}}\}_{k\in\omega}$ such that $\sup O_{n_{k}}<x$ and $x=\sup\bigcup_{k}O_{n_{k}}$. Note that any such sequence may be ordered so that $O_{n_{k}}=(c_{k},d_{k})$ where $d_{k}<c_{k+1}$, $c_{k}\rightarrow x$, and $d_{k}\rightarrow x$. Then $d_{k}-c_{k}\rightarrow 0$, i.e. $\text{diam }O_{n_{k}}\rightarrow 0$. Let $\{(x_{i},y_{i})\}_{i\in\omega}$ be a sequence in $G(f)$ such that $x_{i}<x$ for all $i$ and $(x_{i},y_{i})\rightarrow (x,y)$. Recall that $f$ Define a sequence $\{(x'_{i},y_{i})\}_{i\in\omega}$ in $G(g)$ where $x'_{i}$ is a point of some $O_{n_{i}}$ with $y_{i}\in g(x'_{i})$ if $x_{i}\in O_{n_{i}}$ and $x'_{i}=x_{i}$ if $x_{i}\in[0,1]\setminus\pi_{1}[\text{int }G(f)]$. Note $d((x'_{i},y_{i}),(x_{i},y_{i}))=|x'_{i}-x_{i}|<\text{diam }O_{n_{i}}$ if $x_{i}\in O_{n_{i}}$. Let $\epsilon>0$ and $N_{1}$ such that if $i\geq N_{1}$, $d((x_{i},y_{i}),(x,y))<\frac{\epsilon}{2}$. Since $\text{diam }O_{n_{i}}\rightarrow 0$, there is some $N_{2}$ such that if $i\geq N_{2}$, then $\text{diam }O_{n_{i}}<\frac{\epsilon}{2}$. Then for $i\geq \max\{N_{1},N_{2}\}$,
\[
d((x'_{i},y_{i}),(x,y))\leq d((x',y_{i}),(x_{i},y_{i}))+d((x_{i},y_{i}),(x,y))<\frac{\epsilon}{2}+\frac{\epsilon}{2}=\epsilon.
\]
Then $(x'_{i},y_{i})\rightarrow (x,y)$. Therefore $g$ is weakly continuous from the left. By a similar argument, $g$ is weakly continuous from the right. Thus $g$ is weakly continuous. Then $g$ has the intermediate value property.

%
%
%
\end{proof}

\subsection{Indecomposability giving rise to periodicity}  \label{d.2}

\begin{lemma}\label{irred1}
Let $f:[0,1]\rightarrow 2^{[0,1]}$ be such that $f$ is upper semicontinuous, surjective, and has the intermediate value property. Further suppose that $\invlim\{[0,1],f\}$ is irreducible between $x$ and $y$. For $k\geq 0$, let $J_{k} = \overline{\bigcup_{n\geq k}f^{n-k}(\overline{x_{n}y_{n}})}$. Then for each $k$, $J_{k}$ is a closed subinterval of $[0,1]$ with $f(J_{k+1})=J_{k}$.
\end{lemma}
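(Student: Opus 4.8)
The plan is to establish four things in sequence: (i) each set $f^{n-k}(\overline{x_ny_n})$ is a closed subinterval of $[0,1]$; (ii) all of these (for fixed $k$, $n\ge k$) contain the common subinterval $\overline{x_ky_k}$, whence $J_k$ is a closed subinterval of $[0,1]$; (iii) $J_k\subseteq f(J_{k+1})$; and (iv) $f(J_{k+1})\subseteq J_k$. For (i), note that $f$ carries closed intervals to closed intervals: if $I=\{c\}$ then $f(c)$ is a subcontinuum of $[0,1]$ by Theorem~\ref{IVPCriterion}, and if $I=[a,b]$ with $a<b$ then $f[I]$ is the image under the second-coordinate projection of the compact connected set $G(f)\cap V_{[a,b]}$ (Theorem~\ref{Intersection}), hence a closed interval; iterating, $f^{n-k}(\overline{x_ny_n})$ is a closed interval for every $n\ge k$. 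For (ii), since $x,y\in\invlim\{[0,1],f\}$ we have $x_k\in f(x_{k+1})$, \dots, $x_{n-1}\in f(x_n)$, which chains to $x_k\in f^{n-k}(\{x_n\})\subseteq f^{n-k}(\overline{x_ny_n})$, and likewise $y_k\in f^{n-k}(\overline{x_ny_n})$; thus each of these intervals contains $\overline{x_ky_k}$, so $\bigcup_{n\ge k}f^{n-k}(\overline{x_ny_n})$ is connected and $J_k$, being the closure of a connected subset of $[0,1]$, is a closed subinterval.

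For (iii), each term with $n\ge k+1$ satisfies $f^{n-(k+1)}(\overline{x_ny_n})\subseteq J_{k+1}$, so $f^{n-k}(\overline{x_ny_n})=f\!\left(f^{n-(k+1)}(\overline{x_ny_n})\right)\subseteq f(J_{k+1})$; and for $n=k$, the instance of (ii) with $n$ replaced by $k+1$ gives $\overline{x_ky_k}\subseteq f(\overline{x_{k+1}y_{k+1}})\subseteq f(J_{k+1})$. Hence $\bigcup_{n\ge k}f^{n-k}(\overline{x_ny_n})\subseteq f(J_{k+1})$, and since $J_{k+1}$ is compact and $f$ is upper semicontinuous, $f(J_{k+1})$ is closed; taking closures yields $J_k\subseteq f(J_{k+1})$.

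For (iv), write $J_{k+1}=[\alpha,\beta]$. If $\alpha=\beta$, then $\overline{x_{k+1}y_{k+1}}$ is degenerate, each $f^{n-(k+1)}(\overline{x_ny_n})=\{\alpha\}$, and a direct computation gives $J_k=f(\alpha)=f(J_{k+1})$. If $\alpha<\beta$, set $A=\bigcup_{n\ge k+1}f^{n-(k+1)}(\overline{x_ny_n})$; by (ii) (at level $k+1$) each term of $A$ contains $x_{k+1}$, so $A$ is an interval with $\overline A=J_{k+1}$, whence $(\alpha,\beta)\subseteq A$ and therefore $f[(\alpha,\beta)]\subseteq\bigcup_{n\ge k+1}f^{n-k}(\overline{x_ny_n})\subseteq J_k$. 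Given $t\in[\alpha,\beta]$ and $s\in f(t)$, Theorem~\ref{Intersection} gives $(t,s)\in G(f)\cap V_{[\alpha,\beta]}=\overline{G(f)\cap V_{(\alpha,\beta)}}$, so there are $(t_i,s_i)\in G(f)$ with $t_i\in(\alpha,\beta)$ and $(t_i,s_i)\to(t,s)$; then $s_i\in f(t_i)\subseteq J_k$, and since $J_k$ is closed, $s\in J_k$. Hence $f(J_{k+1})\subseteq J_k$, and combined with (iii) this gives $f(J_{k+1})=J_k$.

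The step I expect to require the most care is (iv): because $f$ is only upper semicontinuous one cannot expect $f(\overline A)\subseteq\overline{f(A)}$ in general, so the argument must leverage both the interval structure of $J_{k+1}$ (established in (i)--(ii)) and the identity $G(f)\cap V_{[a,b]}=\overline{G(f)\cap V_{(a,b)}}$ afforded by the intermediate value property (Theorem~\ref{Intersection}) in order to push the values of $f$ at the endpoints $\alpha$ and $\beta$ into $J_k$; the degenerate case $\alpha=\beta$ should be dispatched separately since Theorem~\ref{Intersection} is stated only for nondegenerate intervals.
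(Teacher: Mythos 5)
Your proof is correct, and the overall skeleton (show each iterate image is a closed interval, show $J_k$ is a closed interval, then prove the two inclusions $J_k\subseteq f(J_{k+1})$ and $f(J_{k+1})\subseteq J_k$) matches the paper's. The interesting divergence is in the harder inclusion $f(J_{k+1})\subseteq J_k$: the paper disposes of it in one line by citing Theorem~3.12 of \cite{Dunn}, which says that weak continuity gives $\overline{f[A]}=f\bigl[\overline{A}\bigr]$, applied to $A=\bigcup_{n\geq k+1}f^{n-(k+1)}(\overline{x_{n}y_{n}})$. You instead reprove exactly the instance you need from first principles, using the identity $G(f)\cap V_{[\alpha,\beta]}=\overline{G(f)\cap V_{(\alpha,\beta)}}$ of Theorem~\ref{Intersection} to push values of $f$ at the endpoints of $J_{k+1}$ into the closed set $J_k$, and you correctly isolate the degenerate case where that theorem does not apply. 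This makes your argument more self-contained (it does not lean on the external closure-commutation result) at the cost of a little extra length, and you rightly flag the genuine subtlety that upper semicontinuity alone does not give $f\bigl[\overline{A}\bigr]\subseteq\overline{f[A]}$. A second, minor difference: you get connectedness of the union from the fact that every term contains $\overline{x_{k}y_{k}}$, whereas the paper observes that the terms are nested increasing; both are valid, and the nestedness observation is the one reused later in Lemma~\ref{irred}, so it is worth noting even if you do not need it here.
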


\begin{proof}
Since $f$ has intermediate value property, $x_{i}\in f(x_{i+1})$, and $y_{i}\in f(y_{i+1})$ for all $i$, $\overline{x_{i}y_{i}}\subseteq f(\overline{x_{i+1}y_{i+1}})$. Thus if $n_{2}>n_{1}$, $f^{n_{1}}(\overline{x_{n_{1}}y_{n_{1}}})\subseteq f^{n_{2}}(\overline{x_{n_{2}}y_{n_{2}}})$. So \[\overline{x_{0}y_{0}}\subseteq f(\overline{x_{1}y_{1}})\subseteq f^{2}(\overline{x_{2}y_{2}})\subseteq \dots.\]
Since $f$ has the intermediate value property, for each $k$ $f^{k}(\overline{x_{n}y_{n}})$ is an interval. Thus $J_{k}$ is a closed subinterval of $[0,1]$.
Note for $n\geq k+1$, \[f(J_{k+1})\supseteq f(f^{n-(k+1)}(\overline{x_{n}y_{n}}))=f^{n-k}(\overline{x_{n}y_{n}}).\] So $f(J_{k+1})\supseteq \bigcup_{n\geq k+1} f^{n-k}(\overline{x_{n}y_{n}})$. But because $\overline{x_{k}y_{k}}\subseteq f(\overline{x_{k+1}y_{k+1}})$, we have \[f(J_{k+1})\supseteq \bigcup_{n\geq k} f^{n-k}(\overline{x_{n}y_{n}}).\]
As $f(J_{k+1})$ is closed, \[f(J_{k+1})\supseteq \overline{\bigcup_{n\geq k} f^{n-k}(\overline{x_{n}y_{n}})} =J_{k}.\]

Similarly for $n\geq k+1$, \[J_{k}\supseteq f^{n-k}(\overline{x_{n}y_{n}})= f(f^{n-(k+1)}(\overline{x_{n}y_{n}})).\]
Thus $J_{k}\supseteq f\left(\bigcup_{n\geq k+1}f^{n-(k+1)}(\overline{x_{n}y_{n}})\right)$. Since $J_{k}$ is closed and $f$ has the intermediate value property and is therefore weakly continuous, by Theorem 3.12 of \cite{Dunn}, \[\overline{f\left(\bigcup_{n\geq k+1}f^{n-(k+1)}(\overline{x_{n}y_{n}})\right)}=f\left(\overline{\bigcup_{n\geq k+1}f^{n-(k+1)}(\overline{x_{n}y_{n}})}\right).\]
Thus \[J_{k}=\overline{f\left(\bigcup_{n\geq k+1}f^{n-(k+1)}(\overline{x_{n}y_{n}})\right)}=f\left(\overline{\bigcup_{n\geq k+1}f^{n-(k+1)}(\overline{x_{n}y_{n}})}\right)=f(J_{k+1}).\]

\end{proof}

\begin{lemma}\label{irred}
Let $f:[0,1]\rightarrow 2^{[0,1]}$ be such that $G(f)$ is connected and $f$ is upper semicontinuous, surjective, and has the intermediate value property. Further suppose that $\invlim\{[0,1],f\}$ is irreducible between $x$ and $y$. If $0<c<d<1$, then there is some $N\in\omega$ such that $n>N$ implies $[c,d]\subseteq f^{n}([x_{n},y_{n}])$.
\end{lemma}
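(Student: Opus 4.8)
The plan is to prove that the interval $J_0$ from Lemma~\ref{irred1} equals all of $[0,1]$, and then to deduce the statement from the fact that the closed intervals $f^n(\overline{x_ny_n})$ increase up to $J_0$.

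First I would record, exactly as in the proof of Lemma~\ref{irred1}, that the intermediate value property yields $f^{0}(\overline{x_0y_0})\subseteq f^{1}(\overline{x_1y_1})\subseteq f^{2}(\overline{x_2y_2})\subseteq\cdots$; writing $I_n=f^n(\overline{x_ny_n})$, the $I_n$ are then an increasing sequence of closed subintervals of $[0,1]$ with $J_0=\overline{\bigcup_n I_n}$, and $x_n,y_n\in\overline{x_ny_n}\subseteq J_n$ for every $n$. Next I would show $J_0=[0,1]$. By Lemma~\ref{irred1}, $f(J_{k+1})=J_k$ for each $k$; since $f(J_{k+1})=\bigcup_{z\in J_{k+1}}f(z)$, this forces $f(z)\subseteq J_k$ for every $z\in J_{k+1}$, so $f$ restricts to an upper semicontinuous surjection $f|_{J_{k+1}}\colon J_{k+1}\to 2^{J_k}$ with the intermediate value property, whose graph $G(f)\cap V_{J_{k+1}}$ is connected by Theorem~\ref{Intersection}. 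Hence $M:=\invlim\{J_n,f|_{J_{n+1}}\}$ is connected by Theorem~\ref{InvLim} (applied to an inverse sequence of subintervals, as in the proof of Theorem~\ref{FPP2}); it is a closed, hence compact, subset of $\invlim\{[0,1],f\}$ and contains both $x$ and $y$. Since $\invlim\{[0,1],f\}$ is irreducible between $x$ and $y$, this forces $M=\invlim\{[0,1],f\}$, whence $J_0\supseteq\pi_0[M]=\pi_0[\invlim\{[0,1],f\}]=[0,1]$, the last equality by surjectivity of $f$.

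With $J_0=[0,1]$ in hand, the rest is interval bookkeeping. Writing $I_n=[a_n,b_n]$, the inclusions $I_n\subseteq I_{n+1}$ make $\{a_n\}$ nonincreasing and $\{b_n\}$ nondecreasing, so they converge to limits $a\le b$ with $(a,b)\subseteq\bigcup_n[a_n,b_n]\subseteq[a,b]$; hence $[a,b]=\overline{\bigcup_n I_n}=J_0=[0,1]$, i.e.\ $a_n\to 0$ and $b_n\to 1$. Given $0<c<d<1$, I would then pick $N$ large enough that $a_n<c$ and $b_n>d$ whenever $n>N$, so that $[c,d]\subseteq[a_n,b_n]=f^n(\overline{x_ny_n})$ for all $n>N$, as desired.

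I expect the only real obstacle to be the step $J_0=[0,1]$, which is the one place the irreducibility hypothesis is used. The care needed there is to check that $f$ genuinely carries each $J_{k+1}$ into $J_k$ (so the restricted inverse sequence $\{J_n,f|_{J_{n+1}}\}$ is legitimate), that the restricted bonding maps retain enough structure — upper semicontinuity, surjectivity, the intermediate value property, connected graph — for Theorem~\ref{InvLim} to apply, and that $x$ and $y$ really lie in the auxiliary continuum $M$. Once that is secured, irreducibility collapses $M$ onto the whole inverse limit, and surjectivity of $f$ delivers $\pi_0[M]=[0,1]$; the remaining steps are routine.
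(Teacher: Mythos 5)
Your proposal is correct and follows essentially the same route as the paper: both form the auxiliary inverse limit $\invlim\{J_k, f|_{J_{k+1}}\}$ of the intervals from Lemma~\ref{irred1}, use irreducibility between $x$ and $y$ to force it to be the whole inverse limit, conclude $J_0=[0,1]$ from surjectivity, and finish via the increasing nested intervals $f^n(\overline{x_ny_n})$. Your write-up simply makes explicit a few steps the paper leaves implicit (that $f$ carries $J_{k+1}$ into $J_k$, and the endpoint bookkeeping at the end).
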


\begin{proof}
Let $J_{k} = \overline{\bigcup_{n\geq k}f^{n-k}(\overline{x_{n}y_{n}})}$, as in Lemma \ref{irred1}, and let $J = \invlim\{J_{k},f|_{J_{k+1}}\}$. As $f|_{J_{k+1}}$ also has the intermediate value property, is surjective, is upper semicontinuous, and $G(f|_{J_{k}})$ is connected by Theorem 3.12 of \cite{Dunn}, $J$ is a subcontinuum of $\invlim\{[0,1],f\}$. Note $x,y\in J$; hence $J=\invlim\{[0,1],f\}$. As $f$ is surjective, $J_{0}=[0,1]$ and $[0,1] = \overline{\bigcup_{n\geq 0}f^{n}(\overline{x_{n}y_{n}})}$. Then because $f^{n}(\overline{x_{n}y_{n}})\subseteq f^{n+1}(\overline{x_{n+1}y_{n+1}})$ for each $n$, there is some $N\in\omega$ such that if $n\geq N$, $[c,d]\subseteq f^{n}(\overline{x_{n}y_{n}})$.
\end{proof}

\begin{lemma}
Suppose $f:[0,1]\rightarrow 2^{[0,1]}$ is upper semicontinuous, surjective, has the intermediate value property. If there are $p,q\in(0,1)$ and $r,s\in\omega$ with $0\in f^{r}(p)$ and $1\in f^{s}(q)$, then $f$ is organic.
\end{lemma}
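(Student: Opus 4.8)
The plan is to verify the definition of \emph{organic} directly: fix $x,y\in\invlim\{[0,1],f\}$ for which $\invlim\{[0,1],f\}$ is irreducible between $x$ and $y$, and produce a single index $N$ with $f^{N}(\overline{x_N y_N})=[0,1]$. Since compositions of upper semicontinuous maps with the intermediate value property are again of that type (the latter as noted in Section~2), $f^{N}$ is upper semicontinuous with the intermediate value property, so $f^{N}(\overline{x_N y_N})$ is a compact connected subset of $[0,1]$, i.e.\ a closed subinterval; hence it suffices to arrange that both $0$ and $1$ lie in $f^{N}(\overline{x_N y_N})$ for one common $N$. Before starting I would record two easy points: $r,s\geq 1$ (for instance $0\in f^{0}(p)=\{p\}$ would force $p=0\notin(0,1)$), and $G(f)$ is connected, which follows from the intermediate value property via Theorem~\ref{Intersection} (take $[a,b]=[0,1]$), so that Lemma~\ref{irred} applies to this $x,y$.

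The engine of the proof is a short index-bookkeeping observation: because $x_i\in f(x_{i+1})$ and $y_i\in f(y_{i+1})$ for all $i$, the intermediate value property gives $\overline{x_i y_i}\subseteq f(\overline{x_{i+1}y_{i+1}})$ (as in the proof of Lemma~\ref{irred1}), whence $\overline{x_m y_m}\subseteq f^{j}(\overline{x_{m+j}y_{m+j}})$ and therefore
\[
f^{\,m-j}(\overline{x_m y_m})\ \subseteq\ f^{\,m}(\overline{x_{m+j}y_{m+j}})\qquad(m\geq j\geq 0),
\]
while also $k\mapsto f^{k}(\overline{x_k y_k})$ is increasing. Now apply Lemma~\ref{irred} with a nondegenerate closed interval $[c,d]$ satisfying $0<c<d<1$ and $p,q\in[c,d]$: there is $M$ with $p,q\in f^{k}(\overline{x_k y_k})$ whenever $k>M$. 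For any $m$ with $m-r>M$ we then get $p\in f^{\,m-r}(\overline{x_m y_m})\subseteq f^{\,m}(\overline{x_{m+r}y_{m+r}})$, and applying $f^{r}$ together with $0\in f^{r}(p)$ gives $0\in f^{\,m+r}(\overline{x_{m+r}y_{m+r}})$; write $N_1=m+r$. Running the same computation with $q$, $s$ and $1$ yields $N_2$ with $1\in f^{\,N_2}(\overline{x_{N_2}y_{N_2}})$. Setting $N=\max(N_1,N_2)$ and using monotonicity, both $0$ and $1$ lie in $f^{N}(\overline{x_N y_N})$, so $f^{N}(\overline{x_N y_N})=[0,1]$, and $f$ is organic.

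The delicate step, and the one I would take care to get exactly right, is the matching of the composition exponent to the coordinate subscript. The naive move ``$p\in f^{n}(\overline{x_n y_n})$, so $0\in f^{r}(p)\subseteq f^{\,n+r}(\overline{x_n y_n})$'' is useless, since $f^{\,n+r}$ is then applied to $\overline{x_n y_n}$ rather than to $\overline{x_{n+r}y_{n+r}}$, and there is no containment between those two intervals; trying to repair this by first pushing $\overline{x_n y_n}$ up into $f^{r}(\overline{x_{n+r}y_{n+r}})$ merely reinstates the same $r$-shift. The fix is to push the interval up \emph{before} running the orbit of $p$: use Lemma~\ref{irred} to place $p$ inside $f^{\,m-r}(\overline{x_m y_m})$, note $f^{\,m-r}(\overline{x_m y_m})\subseteq f^{\,m}(\overline{x_{m+r}y_{m+r}})$, and only afterward apply $f^{r}$, so that the $r$ extra compositions coming from $f^{r}(p)$ exactly absorb the $r$-shift. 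The remaining ingredients are routine and I would only check them briefly: that $f^{N}$ is upper semicontinuous with the intermediate value property, that the image of a closed interval under such a map is a closed interval (using Theorem~\ref{IVPCriterion} to see each value is connected), and that the hypothesis forces $\invlim\{[0,1],f\}$ to be nondegenerate so the irreducibility clause is not vacuous only in trivial ways.
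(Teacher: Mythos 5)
Your overall architecture matches the paper's: reduce to showing that $0$ and $1$ both lie in $f^{N}(\overline{x_N y_N})$ for a single $N$, and feed the orbits of $p$ and $q$ through the increasing family $f^{n}(\overline{x_n y_n})$ via Lemma~\ref{irred}. But the step you yourself single out as delicate is exactly where the argument breaks. Lemma~\ref{irred} gives $p\in f^{k}(\overline{x_k y_k})$ for $k>M$, i.e.\ membership in a set whose composition exponent \emph{equals} its coordinate subscript; taking $k=m-r$ yields $p\in f^{m-r}(\overline{x_{m-r}y_{m-r}})$, not the set $f^{m-r}(\overline{x_m y_m})$ that you write, and there is no containment between $\overline{x_{m-r}y_{m-r}}$ and $\overline{x_m y_m}$ in general. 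Moreover, your bookkeeping inclusion $f^{m-j}(\overline{x_m y_m})\subseteq f^{m}(\overline{x_{m+j}y_{m+j}})$ preserves the difference (subscript minus exponent), as does the monotonicity of $k\mapsto f^{k}(\overline{x_k y_k})$, so no combination of the tools you assemble can move you from the ``difference $0$'' family that Lemma~\ref{irred} controls to the ``difference $r$'' family $f^{n-r}(\overline{x_n y_n})$ that you need. Starting instead from the correct membership $p\in f^{m-r}(\overline{x_{m-r}y_{m-r}})$ and applying $f^{r}$ lands you in $f^{m}(\overline{x_{m-r}y_{m-r}})$, a ``difference $-r$'' set, and the same obstruction prevents a return to the diagonal. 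As written, the proof therefore does not establish $0\in f^{N}(\overline{x_N y_N})$.

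The missing ingredient is precisely the claim $p\in f^{n-r}(\overline{x_n y_n})$ for all large $n$ --- which is true, and is the fact the paper's proof invokes, but it comes from the \emph{proof} of Lemma~\ref{irred} rather than its statement. With $J_k=\overline{\bigcup_{n\geq k}f^{n-k}(\overline{x_n y_n})}$ as in Lemma~\ref{irred1}, that proof shows $J=\invlim\{J_k,f|_{J_{k+1}}\}$ contains $x$ and $y$ and hence equals $\invlim\{[0,1],f\}$; since $f$ is surjective, $\pi_r$ is onto, so $[0,1]=\pi_r(J)\subseteq J_r$. The sets $f^{n-r}(\overline{x_n y_n})$ for $n\geq r$ form an increasing sequence of intervals, each containing $\overline{x_r y_r}$, so their union is an interval dense in $[0,1]$ and therefore contains $(0,1)\ni p$; hence $p\in f^{n-r}(\overline{x_n y_n})$ for all large $n$, and applying $f^{r}$ gives $0\in f^{n}(\overline{x_n y_n})$. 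With that repair (and the analogous one for $q$, $s$, $1$), your argument goes through and coincides with the paper's.
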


\begin{proof}
Suppose $\invlim\{[0,1],f\}$ is irreducible between $x$ and $y$. Then by Lemma \ref{irred}, there are positive integers $N_{r}$ and $N_{s}$ such that if $n>N_{r}$, $p\in f^{n-r}(\overline{x_{n}y_{n}})$ and if $n>N_{s}$, $q\in f^{n-s}(\overline{x_{n}y_{n}})$. So if $n>N_{r}+N_{s}$, $f^{n}(\overline{x_{n}y_{n}})=[0,1]$.
\end{proof}

\begin{theorem}\label{IndecomposablePeriodic}
If $f:[0,1]\rightarrow 2^{[0,1]}$ is upper semicontinuous, organic and has the intermediate value property and $\invlim\{[0,1],f\}$ is indecomposable, then $f$ has a periodic cycle with a period that is not a power of 2.
\end{theorem}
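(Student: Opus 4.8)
The plan is to argue directly. Assume $f$ is upper semicontinuous, organic, has the intermediate value property, and $X:=\ilim\{[0,1],f\}$ is indecomposable; we want a periodic cycle of $f$ whose period is not a power of $2$. I would first note that $f$ is necessarily surjective: the intermediate value property makes each $f^n([0,1])$ a closed subinterval, these nest down to an interval $[c,d]=\bigcap_n f^n([0,1])$ with $f([c,d])=[c,d]$, and since $X$ is indecomposable it is irreducible between some pair of points, so organicity yields an iterate carrying a subinterval onto $[0,1]$; as such an image lies in $[c,d]$, we get $[c,d]=[0,1]$. Hence every $\pi_n$ is onto, $G(f)$ is connected by Theorem~\ref{Intersection}, and, since a composition of upper semicontinuous maps with the intermediate value property again enjoys both, every iterate $f^m$ does too.

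The crux is to build, for some $m$, a \emph{horseshoe} for $g:=f^m$: nondegenerate closed subintervals $A,B\subseteq[0,1]$ with disjoint interiors and $g(A)=g(B)=[0,1]$. For the ingredients, recall that an indecomposable continuum has uncountably many composants, each meager (a composant is a union of proper subcontinua, and every proper subcontinuum of an indecomposable continuum is nowhere dense); consequently $X$ is irreducible between $p$ and $q$ for every pair $p,q$ in distinct composants. For any such pair, organicity gives an $n$ with $f^n(\overline{p_nq_n})=[0,1]$, and Lemma~\ref{irred1}, through the inclusions $\overline{p_kq_k}\subseteq f(\overline{p_{k+1}q_{k+1}})$, upgrades this to $f^m(\overline{p_mq_m})=[0,1]$ for all $m\ge n$. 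I would therefore fix finitely many points of $X$ in distinct composants, pick $N$ beyond all the thresholds coming from organicity on the pairs they determine, find a level $m\ge N$ at which these points have pairwise distinct $m$-th coordinates, and read off from the sorted list of those coordinates two of the intervals $\overline{p_mq_m}$ that are nondegenerate with disjoint interiors; each is then carried onto $[0,1]$ by $g=f^m$. Producing such a level $m$ --- that is, ruling out that the chosen coordinate sequences collapse onto one another at every stage --- is where indecomposability really enters: it is the abundance of irreducible pairs, not the bare existence of one, that does the work, since there are zero-entropy maps $h$ with $h(I)=[0,1]$ for a proper subinterval $I\subseteq(0,1)$.

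From the horseshoe, $g$ has the covering relations $A\to A$, $A\to B$, $B\to A$ (meaning $J'\subseteq g(J)$), so following the length-three loop $B,A,A$ produces, by the standard loop-of-intervals lemma --- which uses only the connectedness of $G(g)\cap V_{[c,d]}$ furnished by the intermediate value property, the same mechanism behind Theorem~\ref{GenSarkovskii} --- a point with $g$-itinerary $(BAA)^{\infty}$; since $A$ and $B$ have disjoint interiors this point is not fixed, so $g=f^m$ has a cycle of period exactly $3$ (with the customary minor care if $A$ and $B$ only abut).

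Finally I would lift this to $f$: concatenating the three length-$m$ paths underlying the period-$3$ cycle of $f^m$ gives a periodic cycle of $f$ whose period $p$ divides $3m$ but not $m$; writing $m=2^ac$ with $c$ odd, any divisor of $3m$ failing to divide $m$ is a multiple of $3$, so $3\mid p$ and $p$ is not a power of $2$. (Once any non-power-of-two period is present, Theorem~\ref{GenSarkovskii} yields the rest of the Sarkovskii tail.) The main obstacle I anticipate is the horseshoe construction --- securing two \emph{nondegenerate}, interior-disjoint subintervals each mapped onto $[0,1]$ by one common iterate of $f$ --- precisely because that is where indecomposability, as opposed to mere irreducibility, must be used in full.
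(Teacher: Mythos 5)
Your proposal follows essentially the same route as the paper's proof: indecomposability supplies three points of $\invlim\{[0,1],f\}$ between any two of which the space is irreducible, organicity together with the nesting in Lemma~\ref{irred1} gives a single iterate $f^{n}$ carrying each of $\overline{x_{n}y_{n}}$, $\overline{y_{n}z_{n}}$, $\overline{x_{n}z_{n}}$ onto $[0,1]$, and a length-three loop through the two adjacent intervals produces an $f$-cycle whose period divides $3n$ but not $n$ and hence is divisible by $3$. The genuine gap is the step you dismiss as ``the customary minor care if $A$ and $B$ only abut.'' With three points the horseshoe intervals are forced to be $A=[x_{n},y_{n}]$ and $B=[y_{n},z_{n}]$, which always abut at $y_{n}$; the loop $B\to A\to A\to B$ yields a cycle $(q_{0},q_{1},q_{2})$ of $f^{n}$ whose period divides $3$, and the period-$1$ degeneration $q_{0}=q_{1}=q_{2}\in A\cap B=\{y_{n}\}$ is not excluded by ``disjoint interiors'' — it is precisely the common endpoint that threatens to absorb the orbit. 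For set-valued maps this exclusion is not minor: it is the bulk of the paper's proof, which splits on whether $y_{n}\in{\rm int}\,f^{n}(y_{n})$ or $f^{n}(y_{n})$ lies to one side of $y_{n}$, and in the latter case uses weak continuity to build the first interval of the loop inside $[x_{n},y_{n})$, strictly avoiding $y_{n}$, so that the resulting cycle cannot collapse. Your proposal would need to supply this argument (or else produce two horseshoe intervals with disjoint \emph{closures}, which only shifts the difficulty to the point below).

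The secondary unproven step is the existence of a level $m$, beyond all the organicity thresholds, at which your finitely many chosen points have pairwise distinct coordinates. You correctly identify this as ``where indecomposability really enters,'' but give no argument, and it is not automatic: for a set-valued bonding function two distinct points of the inverse limit can agree in every coordinate from some index on, and even granting that each pair in distinct composants separates at infinitely many levels, three or more points need a \emph{common} level avoiding all the pairwise coincidence sets at once. (The paper absorbs exactly this point into its ``without loss of generality, $x_{n}<y_{n}<z_{n}$,'' so you are no worse off than the authors here, but your plan to use four or more composants to get disjoint closures leans on this step even harder.) The surjectivity preamble and the final divisibility bookkeeping ($p\mid 3m$, $p\nmid m$ forces $3\mid p$) are fine and match the paper's Case~2 conclusion.
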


\begin{proof}
Since $\invlim\{[0,1],f\}$ is indecomposable, there are three points $x$, $y$, and $z$ such that $\invlim\{[0,1],f\}$ is irreducible between any two of them. Because $f$ is organic, there exists some $n$ such that $f^{n}(\overline{x_{n}y_{n}})=f^{n}(\overline{y_{n}z_{n}})=f^{n}(\overline{x_{n}z_{n}})=[0,1]$. Without loss of generality, suppose $x_{n}<y_{n}<z_{n}$. As $f$ is upper semicontinuous and has the intermediate value property, $f^{n}(y_{n})$ is a closed interval. Thus either $y_{n}\in \text{int}(f^{n}(y_{n}))$, $f^{n}(y_{n})\subseteq [0,y_{n}]$, or $f^{n}(y_{n})\subseteq [y_{n},1]$.

\emph{Case 1:} Suppose $y_{n}\in \text{int}(f^{n}(y_{n}))$. Then there are numbers $c$ and $d$ such that $c<y_{n}<d$ and $f(y_{n})=[c,d]$. As $f$ has the intermediate value property, $f$ is weakly continuous. Thus, there exist sequences $\{(a_{i},c_{i})\}_{i\in\omega}$ and $\{(b_{i},d_{i})\}_{i\in\omega}$ in $G(f^{n})$ such that for all $i$ $a_{i},b_{i}<y_{n}$, $a_{i},b_{i}\rightarrow y_{n}$, $c_{i}\rightarrow c$, and $d_{i}\rightarrow d$. Furthermore these sequences may be chosen such that $a_{i}\leq b_{i}\leq a_{i+1}$ for all $i$. Then because $c<y_{n}<d$, there is some $N\in\mathbb{N}$ such that $i\geq N$ implies $c_{i}<y_{n}<d_{i}$. Since $f^{n}$ has the intermediate value property, for $i\geq N$, there is a point $p_{i}\in[a_{i},b_{i}]$ with $y_{n}\in f^{n}(p_{i})$. Furthermore $p_{i}\rightarrow y_{n}$ since $a_{i},b_{i}\rightarrow y_{n}$.

Note that as $y_{n}\in \text{int}f^{n}(y_{n})$ and $p_{i}\rightarrow y_{n}$, $p_{i}\in f^{n}(y_{n})$ for cofinitely many $i$. Then $y_{n}\in f^{n}(p_{i})$ and $p_{i}\in f^{n}(y_{n})$ for cofinitely many $i$. Thus, for any $k\in\mathbb{N}$, there is a periodic orbit of the form $(q_{0},\dots, q_{2kn})$ where for $j=0\dots,k$, $q_{2jn}=y_{n}$ and for $j=0,\dots, k-1$, $q_{(2j+1)n}$ is a distinct member of the $p_{i}$'s. In particular, $k=3$ gives a periodic cycle with a period that is not a power of 2, satisfying the conclusion of the theorem.

\emph{Case 2:} Suppose $f^{n}(y_{n})\subseteq [0,y_{n}]$. Then either $f^{n}(y_{n})=\{y_{n}\}$ or there is some value $b\in f^{n}(y_{n})$ with $b<y_{n}$. If $f^{n}(y_{n})=y_{n}$, then there are values $a,b\in[x_{n},y_{n})$ such that $0\in f^{n}(a)$ and $1\in f^{n}(b)$. Thus there is a closed interval $J_{1}\subseteq \overline{ab}\subseteq [x_{n},y_{n})$ and a restriction $f^{n}|_{J_{1}}^{[y_{n},z_{n}]}$ of $f^{n}$ such that $f^{n}|_{J_{1}}^{[y_{n},z_{n}]}(J_{1})=[y_{n},z_{n}]$ \cite{OteyRyden}.

We show that such a $J_{1}$ also exists if there is some $b\in f^{n}(y_{n})$ with $b<y_{n}$. By the weak continuity of $f^{n}$ there is a sequence $\{(a_{i},b_{i})\}_{i\in\omega}$ such that for all $i$, $x_{n}\leq a_{i}<y_{n}$, $b_{i}\in f^{n}(a_{i})$, $a_{i}\rightarrow y_{n}$, and $b_{i}\rightarrow b$. Thus there is some $b_{N}<y_{n}$. Let $q\in [x_{n},y_{n})$ be a point such that $1\in f^{n}(q)$. Then $f^{n}(\overline{b_{N}q})\supseteq[b,1]\supseteq[y_{n},z_{n}]$, so there is a closed interval $J_{1}\subseteq \overline{b_{N}q}\subseteq [x_{n},y_{n})$ and a restriction $f^{n}|_{J_{1}}^{[y_{n},z_{n}]}$ of $f^{n}$ such that $f^{n}|_{J_{1}}^{[y_{n},z_{n}]}(J_{1})=[y_{n},z_{n}]$.


As $f^{n}([x_{n},y_{n}])\supseteq J_{1}$, there is a closed subinterval $J_{2}$ of $[x_{n},y_{n}]$ and a restriction $f^{n}|_{J_{2}}^{J_{1}}$ of $f^{n}$ such that $f^{n}|_{J_{2}}^{J_{1}}(J_{2})=J_{1}$. Similarly there is a closed subinterval $J_{3}$ of $[y_{n},z_{n}]$ and a restriction $f^{n}|_{J_{3}}^{J_{2}}$ of $f^{n}$ such that $f^{n}|_{J_{3}}^{J_{2}}(J_{3})=J_{2}$.

Thus $J_{3}\subseteq f^{n}|_{J_{1}}^{[y_{n},z_{n}]}(f^{n}|_{J_{2}}^{J_{1}}(f^{n}|_{J_{3}}^{J_{2}}(J_{3})))\subseteq f^{3n}(J_{3})$. Then there is a periodic orbit $(q_{0},\dots, q_{3n})$ with $q_{0}=q_{3n}=q\in J_{3}$, $q_{n}\in J_{2}$, and $q_{2n}\in J_{1}$. Suppose $q_{2n}=q$. Then $q\in J_{1}\cap J_{3}\subseteq [x_{n},y_{n}]\cap[y_{n},z_{n}]=\{y_{n}\}$. But then we would have $y_{n}\in J_{1}$, a contradiction. So $q\neq q_{2n}$.

Let $s$ be the period of $(q_{0},\dots, q_{3n})$. Then $s\mid 3n$. As $q_{2n}\neq q_{0}=q$, $s\nmid 2n$. If $s\mid n$, then $s\mid 2n$, a contradiction. It follows that $s\nmid n$. Therefore $3\mid s$, and $s$ is not a power of 2 as desired. The case for $f^{n}(y_{n})\subseteq[y_{n},1]$ follows from a similar argument.
\end{proof}

%
%

\end{document}